\newcommand{\bea}{\begin{eqnarray}}
\newcommand{\eea}{\end{eqnarray}}
\newcommand{\bna}{\begin{eqnarray*}}
\newcommand{\ena}{\end{eqnarray*}}
\numberwithin{equation}{section}
\renewcommand{\thefootnote}{\fnsymbol{footnote}}
\theoremstyle{plain}
\newtheorem{theorem}{Theorem}
\newtheorem{lemma}{Lemma}
\newtheorem{proposition}[lemma]{Proposition}
\theoremstyle{definition}
\newcommand{\supp}{\operatorname{supp}}
\newcommand\blfootnote[1]{%
  \begingroup
  \renewcommand\thefootnote{}\footnote{#1}%
  \addtocounter{footnote}{-1}%
  \endgroup
}
\renewcommand{\Re}{\operatorname{Re}}
\begin{document}

\title{A bound for twists of $\rm GL_3\times GL_2$ $L$-functions with composite modulus}

		\author{Qingfeng Sun}
	\date{}
	\address{School of Mathematics and Statistics, Shandong University, Weihai\\Weihai, Shandong
		264209, China}
	\email{qfsun@sdu.edu.cn}

	\author{Yanxue Yu}
	\date{}
	\address{School of Mathematics and Statistics, Shandong University, Weihai\\Weihai, Shandong
		264209, China}
    \email{yanxueyu@mail.sdu.edu.cn}

\begin{abstract}
Let $\pi$ be a Hecke-Maass cusp form for $\rm SL_3(\mathbf{Z})$ and
let $g$ be a holomorphic or Maass cusp form
for $\rm SL_2(\mathbf{Z})$. Let $\chi$ be
a primitive Dirichlet character of modulus $M=M_1M_2$ with $M_i$ prime, $i=1,2$.
Suppose that $M^{1/2+2\eta}<M_1<M^{1-2\eta}$ with $0<\eta<1/8$. Then we have
$$
L\left(\frac{1}{2},\pi\otimes g \otimes \chi\right)\ll_{\pi,g,\varepsilon}
M^{3/2-\eta+\varepsilon}.
$$
\end{abstract}
\thanks{Q. Sun was partially
  supported by the National Natural Science Foundation
  of China (Grant Nos. 11871306 and 12031008)}

\keywords{Subconvexity, $\rm GL_3\times GL_2$ $L$-functions, composite modulus}

\blfootnote{{\it 2010 Mathematics Subject Classification}: 11F66, 11M41}

\maketitle


\section{Introduction}

Let $\pi$ be a Hecke-Maass cusp form for $\rm SL_3(\mathbf{Z})$ with normalized Fourier
coefficients $\lambda_{\pi}(n_1,n_2)$ such that $\lambda_{\pi}(1,1)=1$.
Let $g$ be a holomorphic or Maass cusp form for $\rm SL_2(\mathbf{Z})$
with normalized Fourier coefficients $\lambda_g(n)$ such that $\lambda_g(1)=1$.
Let $\chi$ be a primitive
Dirichlet character modulo $M$. The $L$-function attached to the twisted form
$\pi\otimes g\otimes \chi$ is given by the Dirichlet series
\bna
L(s,\pi \otimes g\otimes\chi)=\sum_{r=1}^{\infty}\sum_{n=1}^{\infty}
\frac{\lambda_{\pi}(n,r)\lambda_g(n)\chi(n)}{(r^2n)^s}
\ena
for Re$(s)>1$, which can be continued to an entire function with
a functional equation of arithmetic conductor $M^6$ (ignoring the dependence
on the form $\pi$ and $g$). Then by the Phragmen-Lindel\"{o}f convexity
principle one derives the convexity bound
$L\left(1/2,\pi \otimes g\otimes\chi\right)\ll_{\pi,g,\varepsilon}
M^{3/2+\varepsilon}$,
where hereafter $\varepsilon$ denotes arbitrary small positive constant, which is not
necessarily the same at different occurrences.
The important challenge for us is to prove a subconvexity bound which improves the convexity bound
by reducing the exponent $3/2$. For $\pi$ self-dual, $g$ a Hecke-Maass csup form for
$\Gamma_0(M)\subseteq \rm SL_2(\mathbf{Z})$, $M$ prime and $\chi$ a
quadratic character modulo $M$, Blomer \cite{B} first proved the subconvexity
$$
L\left(1/2,\pi \otimes g\otimes\chi\right)\ll_{\pi,g,\varepsilon}
M^{5/4+\varepsilon}.
$$
Later, Sharma \cite{Sharma2019} generalized Blomer's result to any $\rm GL_3$ form and obtained the
bound $O_{\pi,g,\varepsilon}\left(M^{3/2-1/16+\varepsilon}\right)$ for $M$ prime. Recently, Lin, Michel and
Sawin \cite{LMS}
extended this bound by replacing $\chi$ by a generic trace function
$K: (\mathbf{Z}/M\mathbf{Z})^{\times}\rightarrow \mathbf{C}$ which is the
Frobenius trace function associated to some geometrically
irreducible middle extension sheaf $\mathcal{F}$  on $\mathbf{P}^1_{\mathbf{F}_M}$
pure of weight $0$ satisfying some additional generic conditions.

In this paper, we further extend the previous works by considering
the case that $\chi$ is a Dirichlet character of composite modulus.
Our main result is the following.

\begin{theorem}\label{main theorem}
Let $\pi$ be a Hecke-Maass cusp form for $\rm SL_3(\mathbf{Z})$ and
let $g$ be a holomorphic or Maass cusp form
for $\rm SL_2(\mathbf{Z})$. Let $\chi=\chi_1\chi_2$ be
a Dirichlet character with $\chi_i$ primitive modulo $M_i$.
Suppose that $M_1$, $M_2$ are primes such that $M^{1/2+2\eta}<M_1<M^{1-2\eta}$,
where $M=M_1M_2$ and $0<\eta<1/8$. Then we have
$$
L\left(\frac{1}{2},\pi\otimes g \otimes \chi\right)\ll_{\pi,g,\varepsilon}
M^{3/2-\eta+\varepsilon}.
$$
\end{theorem}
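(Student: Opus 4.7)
The argument begins with an approximate functional equation for $L(s,\pi\otimes g\otimes\chi)$ which represents $L(1/2,\pi\otimes g\otimes\chi)$ as a Dirichlet series of effective length $M^{3+\varepsilon}$. After a smooth dyadic partition, the theorem reduces to showing that for every $R,N\ge 1$ with $R^2N\le M^{3+\varepsilon}$ the smoothed sum
\[
S(R,N)=\sum_{r\asymp R}\sum_{n\asymp N}\lambda_{\pi}(n,r)\lambda_g(n)\chi(n)\,W\!\left(\frac{r}{R},\frac{n}{N}\right)
\]
satisfies $S(R,N)\ll RN^{1/2}M^{3/2-\eta+\varepsilon}$. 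The critical regime is $R\asymp 1$ and $N\asymp M^3$, where Rankin--Selberg gives only the trivial $M^{3+\varepsilon}$ and the target is a saving of~$M^\eta$.

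Next, I separate the $GL_3$ and $GL_2\otimes\chi$ variables by a Duke--Friedlander--Iwaniec delta symbol detecting $n=m$, but with its harmonic modulus chosen to be $M_1q$ for $q\asymp Q$ (a parameter to be optimised) rather than $q$. This is the conductor-lowering step, tailored to the factorisation $M=M_1M_2$: after a CRT split, $\chi_1$ is absorbed into the additive character modulo $M_1$, while only the character $\chi_2$ of prime conductor $M_2$ remains in the genuine arithmetic oscillation. The hypothesis $M^{1/2+2\eta}<M_1<M^{1-2\eta}$ will enter when balancing the length $M_1Q$ of the harmonic detector against the lengths of the forthcoming dual sums.

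I then apply $GL_3$-Voronoi to the $n$-variable (reducing length $N$ to $(M_1Q)^3/N$, with Kloosterman-type duals modulo $M_1q$) and $GL_2$-Voronoi, adapted to $g\otimes\chi_2$, to the $m$-variable. A Cauchy--Schwarz inequality in the dualised $GL_3$-variables $(n_1,n_2)$ combined with the Rankin--Selberg bound $\sum_{n_1n_2^2\le X}|\lambda_\pi(n_1,n_2)|^2\ll X^{1+\varepsilon}$ eliminates the $GL_3$-coefficients, and expanding the resulting square produces a diagonal sum with two harmonic moduli $M_1q$ and $M_1q'$. Poisson summation on the innermost variable modulo $M_1^2qq'M_2$ then produces a complete character sum whose arithmetic heart is a twisted Kloosterman/hyper-Kloosterman sum modulo~$M_2$.

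Finally, one estimates this character sum by Deligne/Weil --- the generic bound is $M_2^{1/2}(qq')^{1/2}$ --- and sums the contributions of the degenerate loci where the modulus drops. Balancing the off-diagonal term (controlled by the character-sum bound and by the length of the dual $GL_3$-sum) against the diagonal contribution, then optimising $Q$, produces the saving $M^\eta$. The stated range $M^{1/2+2\eta}<M_1<M^{1-2\eta}$ is precisely the window in which both the diagonal and the off-diagonal can be kept below the subconvex target with the same exponent. The main obstacle I anticipate is the handling of the degenerate strata of the character sum modulo $M_2$ (and modulo the small primes $q,q'$), since the entire saving depends on showing that these degenerate contributions do not exceed the generic square-root bound --- a delicate point because $M_2$ can be close to $\sqrt{M}$, so that any inadvertent loss in the character-sum analysis immediately erases the $M^\eta$ gain.
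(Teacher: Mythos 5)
Your high-level outline is the right one and matches the paper's skeleton: approximate functional equation, DFI delta symbol with the modulus artificially enlarged to $qM_1$ (writing $\delta(n)=\delta(n/M_1)\mathbf{1}_{M_1\mid n}$, so $Q\asymp\sqrt{N/M_1}$), $\rm GL_3$-Voronoi on the $\pi$-side and $\rm GL_2$-Voronoi on the $g\otimes\chi$-side, Cauchy--Schwarz in the dualised $\rm GL_3$-variables followed by Poisson in $n_2$, and finally a character-sum estimate plus analysis of the degenerate strata. Your length book-keeping (e.g.\ that the dual $\rm GL_3$-range is $n_1^2n_2\ll (M_1Q)^3/N$, equivalently $N^2/Q^3$ once $Q=\sqrt{N/M_1}$) is also consistent with the paper.

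However, there is a substantive error in where you locate the arithmetic difficulty, and it is not a detail that could be papered over at the end. You claim that after the CRT split ``$\chi_1$ is absorbed into the additive character modulo $M_1$, while only the character $\chi_2$ of prime conductor $M_2$ remains in the genuine arithmetic oscillation,'' and that the sum produced by Poisson is ``a twisted Kloosterman/hyper-Kloosterman sum modulo $M_2$'' to be handled by ``Deligne/Weil --- the generic bound is $M_2^{1/2}(qq')^{1/2}$.'' This has the roles of $M_1$ and $M_2$ reversed and underestimates what is needed. In the paper, after opening $\chi$ by Gauss sums and dualising, the $\chi_2$-part factors out cleanly as explicit Gauss-sum prefactors (see the factors $\tau(\chi_2)$ and $\chi_2(\cdots)$ in Lemma~\ref{initial character sum}); they contribute only size, not oscillation, once the square is opened. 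What remains as the genuinely hard object after Poisson is a correlation sum \emph{modulo $M_1$}, namely $\mathcal{C}_1(\widetilde n_2)$ in \eqref{C11}, which correlates $\mathfrak{D}(\cdot;M_1)$ against a shifted copy of itself. This $\mathfrak{D}$ is the convolution of a Kloosterman sum with the $\chi_1$-twisted ``$\mathrm{L}$''-sum of \eqref{L}; bounding its autocorrelation by $O(M_1^{1/2})$ off a precisely described degenerate locus is exactly Proposition~4.5 of Lin--Michel--Sawin (quoted as Proposition~\ref{prop}), and this is a nontrivial sheaf-theoretic input (sum-product sheaves), not a one-line Weil bound for a Kloosterman or hyper-Kloosterman sum. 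If one instead tries, as you propose, to rest the argument on a routine Weil estimate for a sum modulo $M_2$, the term that in the paper is controlled by the $M_1^{1/2}$-cancellation has nothing to save it, and the final balancing cannot reach $M^{3/2-\eta}$. The assumption $M^{1/2+2\eta}<M_1<M^{1-2\eta}$ is then exactly the window where the two main terms in Proposition~\ref{prop1} (driven by the $M_1^{1/2}$ off-diagonal saving and the diagonal/degenerate contributions) are both below the subconvex threshold.

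So: same architecture, but you need to replace the ``Kloosterman mod $M_2$ by Weil'' step by the autocorrelation bound modulo $M_1$ for the convolution sheaf (the Lin--Michel--Sawin input), and you should treat $\chi_2$ as contributing only Gauss-sum normalisations. With that correction the proposal would align with the paper's proof.
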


This paper is inspired by the work of
Munshi \cite{Munshi1} where the subconvexity problem of the $\rm GL_3$
$L$-function $L\left(s,\pi\otimes \chi\right)$ with
the Dirichlet character $\chi$ of
modulus a composite number is considered. More precisely,
for $\chi=\chi_1\chi_2$ a Dirichlet character with $\chi_i$ primitive
modulo prime $M_i$ such that $\sqrt{M_2}M^{4\vartheta}<M_1<M_1M^{-3\vartheta}$,
Munshi proved that
$$
L\left(\frac{1}{2},\pi\otimes \chi\right)\ll_{\pi,\varepsilon}
M^{3/4-\vartheta+\varepsilon},
$$
where $M=M_1M_2$ and $0<\vartheta<1/28$. As in \cite{Munshi1}, for the sake of brevity, we do not pursue
the best $\eta$ here.

Let's take a quick look at the method. We follow Munshi's strategy in \cite{Munshi1} and start the proof by
using the approximate functional equation to translate the problem as
\bna
L\left(1/2,\pi\otimes g \otimes \chi\right)
\ll N^{-1/2}\sum_{r^2n\sim N}\lambda_{\pi}(n,r)\lambda_g(n)\chi(n)
\ena
with $N\sim M^3$.
Since the contribution from large $r$ can be trivially controlled by Cauchy-Schwarz and
Rankin-Selberg's estimate, and the parameter $r$ in our actual proof is fixed, without loss
of generality, we only consider the contribution from $r=1$ here and denote
\bna
S(N)=\sum_{n\sim N}\lambda_{\pi}(n,1)\lambda_g(n)\chi(n).
\ena
Next, we separate the oscillations from $\lambda_{\pi}(n,1)$ and $\lambda_g(n)\chi(n)$
by applying the $\delta$-method due to Duke, Friedlander and Iwaniec \cite{DFI} together with
a conductor lowering mechanism developed in a series of works of Munshi \cite{Munshi1},
\cite{Munshi2} and \cite{Munshi2018}; see Lemma \ref{circle method}. Then we get
\bna
S(N)\approx \frac{1}{QM_1}\sum_{q\sim Q\atop (q,M)=1}\frac{1}{q}\;
   \sum\limits_{a(\text{mod} \,qM_1)\atop (a,qM_1)=1}\mathop{\sum\sum}_{n,m\sim N}
  \lambda_{\pi}(n,1)\lambda_g(m)\chi(m)e\left(\frac{a(m-n)}{qM_1}\right),
\ena
where we take $Q=\sqrt{N/M_1}$. This choice of $Q$ benefits from
the conductor lowering trick, where we write $\delta(n)$ as $\delta(n/M_1)\mathbf{1}_{M_1|n}$.
Now the $m$-and $n$-sums are in a form ready to use the
Voronoi summation formulas. By employing the $\rm GL_2$ and $\rm GL_3$-Voronoi formulas
to the $m$-and $n$-sums respectively, we arrive at an expression of the form
(in fact there is another smaller sum which we omit here for simplicity)
\bna
\sum_{n_1^2n_2\sim N^2/Q^3}\lambda_{\pi}(n_1,n_2)\sum_{q\sim Q}\;
\sum_{m\sim M^2/M_1}\quad\lambda_g(m)
C^*(n_1,n_2,m,q),
\ena
where the character sum can be factored as sub-character sums with modulus $qr/n_1$ and $M_1$, respectively.
Next we apply Cauchy-Schwartz inequality to get rid of the Fourier coefficients $\lambda_{\pi}(n_1,n_2)$.
Then we need to deal with
\bna
\sum_{n_1^2n_2\sim N^2/Q^3}\bigg|\sum_{q\sim Q}
\sum_{m\sim M^2/M_1}\lambda_g(m)
C^*(n_1,n_2,m,q)\bigg|^2.
\ena
Opening the square and applying Poisson summation to the
sum over $n_2$, we end up with a character sum which has essentially been
estimated in \cite{LMS} and \cite{LS}. There is certainly a complex integral
after Poisson, which we evaluate by the stationary phase analysis and the
two dimensional second derivative test; see Lemma \ref{integral estimate}.

\section{Proof of main theorem}
By applying the approximate functional equation of $L(s,\pi \otimes g \otimes \chi)$
(Theorem 5.3 and Proposition 5.4 of \cite{IK}), we have
\bea\label{app fun}
    L(1/2,\pi\otimes g \otimes \chi) \ll
    M^\varepsilon \sup_{N \ll M^{3+\varepsilon}} \frac{|\mathcal{S}(N)|}{\sqrt{N}} + M^{-A},
\eea
for any $A>0$, where
\bna
\mathcal{S}(N) = \sum_{r\geq1} \sum_{n\geq1} \lambda_{\pi}(n,r)\lambda_g(n)
    \chi(n)V(r^2n/N)
\ena
with some compactly supported smooth function $V$ such that
$\supp V\subset [1,2]$ and  $V^{(j)}(x)\ll_j 1$ for any integer $j\geq 0$.
By Cauchy-Schwarz inequality and the Rankin-Selberg estimates
(see Molteni \cite[Theorem 2]{Mol})
\bea\label{GL2 RS}
\sum_{n\ll x}|\lambda_g(n)|^2\ll_{g,\varepsilon} x^{1+\varepsilon}
\eea
and
\bea\label{GL3 RS}
\sum_{nr^2\ll x}|\lambda_\pi(n,r)|^2 \ll_{\pi,\varepsilon} x^{1+\varepsilon},
\eea
we have the trivial bound $ \mathcal{S}(N) \ll N$.
Thus the contribution from $ N \leq M^{3-\theta} $
to $ L(1/2, \pi\otimes g \otimes \chi ) $ in (2.1) is
at most $O\left(M^{(3-\theta)/2}\right)$,
where $ \theta > 0$ is a constant which will be chosen optimally later.
Moreover, the contribution from ``large" $r$ is
\bna
&&\sum_{r \geq M^\theta}\sum_{n \ll N/r^2}
\lambda_\pi(n,r)\lambda_g(n)\chi(n)V(r^2n/N)\\
&\ll& \bigg(\sum_{r \geq M^\theta}r^2\bigg|\sum_{n \ll N/r^2}
\lambda_\pi(n,r)\lambda_g(n)\chi(n)V(r^2n/N)\bigg|^2
\bigg)^{\frac{1}{2}}
\bigg(\sum_{r \geq M^\theta}\frac{1}{r^2}\bigg)^{\frac{1}{2}}\\
&\ll& \bigg(\sum_{r \geq M^\theta}r^2\sum_{n \ll N/r^2}
\bigg|\lambda_\pi(n,r)\bigg|^2
\sum_{n \ll N/r^2}\bigg|\lambda_g(n)\bigg|^2\bigg)^{\frac{1}{2}}
M^{-\frac{\theta}{2}}\\
&\ll& N^{\frac{1}{2}}M^{-\frac{\theta}{2}}
\bigg(\sum_{nr^2 \ll N}\bigg|\lambda_\pi(n,r)\bigg|^2
\bigg)^{\frac{1}{2}}\\
&\ll& NM^{-\frac{\theta}{2}}.
\ena
It follows that the contribution from $ r \geq M^\theta$
to $ L(1/2, \pi \otimes g \otimes \chi)$ in \eqref{app fun} is bounded by
$O\left(M^{(3-\theta)/2+\varepsilon}\right)$. Assembling the above argument, we obtain
\bea\label{set up}
L\left(\frac{1}{2}, \pi \otimes g \otimes \chi\right)
\ll_{\pi, g, \epsilon}M^\epsilon
\sum_{r\leq M^\theta}
\frac{1}{r}\sup_{M^{3-\theta}/r^2\leq N \leq M^{3+\epsilon}/r^2}
\frac{|\mathcal{S}_r(N)|}{\sqrt N}
+M^{\frac{3-\theta}{2}+\varepsilon},
\eea
where
\bea\label{initial sum}
\mathcal{S}_r(N)=\sum_{n=1}^{\infty}\lambda_{\pi}(n,r)\lambda_g(n)\chi(n)V\left(n/N\right).
\eea
The contribution from the terms with $(r,M)\neq 1$ is smaller. For simplicity, we assume
\bea\label{asummption on r}
r<\min\{M_1,M_2\}.
\eea
Then $(r,M_1 M_2 )=1$.
Under the assumption \eqref{asummption on r} we will establish the following.
\begin{proposition}\label{prop1}
We have
\bna
\sum_{r\leq M^\theta}
\frac{1}{r}\sup_{M^{3-\theta}/r^2\leq N \leq M^{3+\epsilon}/r^2}
\frac{|\mathcal{S}_r(N)|}{\sqrt N}\ll
M^{3/4}M_1^{3/4}+M^{1+\theta/2}M_1^{1/4}
+M^{3/2}M_1^{-1/4}+M^{7/4}M_1^{-1/2}.
\ena
\end{proposition}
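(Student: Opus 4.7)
The plan is to follow the Munshi-style roadmap already sketched after Theorem~\ref{main theorem}. Starting from \eqref{initial sum} I introduce an auxiliary variable $m$ detected by a Kronecker~$\delta$, separating the $\rm GL_3$ and $\rm GL_2$-twisted oscillations:
\[
\mathcal{S}_r(N)=\mathop{\sum\sum}_{n,m\geq 1}\lambda_\pi(n,r)\lambda_g(m)\chi(m)\,\delta(n-m)\,V(n/N)\widetilde{V}(m/N).
\]
To the $\delta$-symbol I apply the DFI circle method with the conductor-lowering device (Lemma~\ref{circle method}), writing $\delta(n-m)=\delta((n-m)/M_1)\mathbf{1}_{M_1\mid n-m}$, and I choose $Q=\sqrt{N/(rM_1)}$. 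Unwinding yields
\[
\mathcal{S}_r(N)\approx\frac{1}{QM_1}\sum_{\substack{q\sim Q\\ (q,rM)=1}}\frac{1}{q}\sum_{\substack{a\pmod{qM_1}\\ (a,qM_1)=1}}\mathop{\sum\sum}_{n,m}\lambda_\pi(n,r)\lambda_g(m)\chi(m)\,e\!\left(\frac{a(n-m)}{qM_1}\right)I(n,m),
\]
with $I(n,m)$ a smooth oscillatory weight.

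Next I dualise both inner sums. Applying the $\rm GL_3$ Voronoi formula to the $n$-sum produces a dual sum over pairs $(n_1,n_2)$ of length $n_1^2n_2\sim(qM_1)^3/N$ with modulus $qM_1$. Applying the $\rm GL_2$ Voronoi formula to the $m$-sum, together with reciprocity and the CRT factorisation $M=M_1M_2$, dualises $m$ to a sum of length $\sim M^2Q^2/N\sim M^2/(rM_1)$ and factors the twisting character into a piece of modulus $M_1$ (a complete Gauss-type sum) and a piece of modulus $qM_2$; this is the place where the composite structure of $M$ pays off. Collapsing the $a$-sum, the outcome has the shape
\[
\sum_{n_1^2n_2\ll N^2/(rQ)^3}\lambda_\pi(n_1,n_2)\sum_{q\sim Q}\sum_{m\sim M^2/(rM_1)}\lambda_g(m)\,C^*(n_1,n_2,m,q),
\]
with $C^*$ a product of character sub-sums modulo $qr/n_1$ and modulo $M_1$, multiplied by a smooth oscillatory factor.

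To remove the $\rm GL_3$ coefficients I then apply Cauchy--Schwarz in $(n_1,n_2)$ and invoke \eqref{GL3 RS}, reducing matters to
\[
\sum_{n_1^2n_2\ll N^2/(rQ)^3}\Bigl|\sum_{q\sim Q}\sum_{m\sim M^2/(rM_1)}\lambda_g(m)\,C^*(n_1,n_2,m,q)\Bigr|^2.
\]
Opening the square and performing Poisson summation in the $n_2$-variable modulo $q_1q_2M_1$ produces a character sum $\mathfrak{C}$ times an oscillatory integral $\mathfrak{I}$. By CRT, $\mathfrak{C}$ splits into a piece modulo $M_1$, of the type estimated in \cite{LMS}, and a piece modulo $q_1q_2$, of the type estimated in \cite{LS}; generic frequencies enjoy square-root cancellation, while the zero frequency furnishes the main term. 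The integral $\mathfrak{I}$ is handled by stationary phase and the two-dimensional second derivative test via Lemma~\ref{integral estimate}. Summing the resulting estimates over $m$, $q_1,q_2$, $n_1$ and finally $r\leq M^\theta$, and balancing the four regimes (zero vs.\ generic Poisson frequency, small vs.\ large $n_1$), produces the four terms $M^{3/4}M_1^{3/4}$, $M^{1+\theta/2}M_1^{1/4}$, $M^{3/2}M_1^{-1/4}$ and $M^{7/4}M_1^{-1/2}$ claimed in the proposition.

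The main obstacle is the uniform evaluation of $\mathfrak{C}$ on the degenerate loci, where $n_1$ shares a factor with $q_1q_2$ or the additive Poisson frequency vanishes, since it is precisely these contributions that produce the leading term $M^{3/4}M_1^{3/4}$ and cannot be recovered from square-root cancellation in the generic range. A secondary delicate point is carrying out the stationary-phase analysis of $\mathfrak{I}$ uniformly across the admissible range $M^{1/2+2\eta}<M_1<M^{1-2\eta}$, so that the condition on $M_1$ in Theorem~\ref{main theorem} and the exponent $3/2-\eta$ both arise naturally from balancing the four terms above, rather than being imposed \emph{a posteriori}.
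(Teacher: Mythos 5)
Your proposal follows essentially the same strategy as the paper's proof of Proposition \ref{prop1}: the DFI $\delta$-method with conductor lowering at $M_1$ (Lemma \ref{circle method}), GL$_2$ and GL$_3$ Voronoi duality, Cauchy--Schwarz in $(n_1,n_2)$ followed by a second Cauchy--Schwarz to split $q=q_1q_2$, Poisson summation in $n_2$, and the character-sum estimates from Lin--Michel--Sawin and Lin--Sun, so this is the correct roadmap. Two small cosmetic discrepancies worth flagging: the paper sets $Q=\sqrt{N/M_1}$ rather than $\sqrt{N/(rM_1)}$, and the four terms in the Proposition come from the zero versus nonzero Poisson frequency (with the zero frequency contributing a diagonal main term and an off-diagonal generic term, and the nonzero contributing two further sub-terms from the $(C+N_1)$ factor), not from a small/large $n_1$ dichotomy.
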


By Proposition \ref{prop1} the right hand side of \eqref{set up}
can be dominated by $M^{(3-\theta)/2+\varepsilon}$ if
\bna
M^{1/2+\theta}<M_1<M^{1-2\theta/3}, \qquad 0<\theta<3/10.
\ena
Then by combining the assumption in \eqref{asummption on r},
one sees that
\bna
L\left(1/2, \pi \otimes g \otimes \chi\right)
\ll_{\pi, g, \epsilon}M^{\frac{3-\theta}{2}+\varepsilon}
\ena
for
\bna
M^{1/2+\theta}<M_1<M^{1-\theta}, \qquad 0<\theta<1/4.
\ena
So Theorem \ref{main theorem} follows.
The following sections of this paper will be devoted to the proof of Proposition \ref{prop1}.

\subsection{Applying the $\delta$-method }
Define $\delta: \mathbf{Z}\rightarrow \{0,1\}$ with
$\delta(0)=1$ and $\delta(n)=0$ for $n\neq 0$.
To separate oscillations from $\lambda_\pi(r,n)$ and $\lambda_g(n)\chi(n)$,
we use a version of the delta method due to Duke, Freidlander
and Iwaniec (see \cite[Chapter 20]{IK})
which states that for any $n\in \mathbf{Z}$ and $Q\in \mathbb{R}^+$, we have
\bea\label{DFI's}
\delta(n)=\frac{1}{Q}\sum_{1\leq q\leq Q}\frac{1}{q}\;
\sideset{}{^*}\sum_{a(\text{{\rm mod }} q)}
e\left(\frac{na}{q}\right)\int_{\mathbb{R}}
\omega(q,\zeta)e\left(\frac{n\zeta}{qQ}\right)\mathrm{d}\zeta,
\eea
where the $*$ on the sum indicates
that the sum over $a$ is restricted to $(a,q)=1$.
The function $\omega$ has the following properties (see (20.158) and (20.159)
of \cite{IK} and Lemma 15 of \cite{HB})
\bea\label{omega-h}
\omega(q,\zeta)\ll |\zeta|^{-A}, \;\;\;\;\;\; \omega(q,\zeta) =1+h(q,\zeta)\;\;\text{with}\;\;h(q,\zeta)=
O\left(\frac{Q}{q}\left(\frac{q}{Q}+|\zeta|\right)^A\right)
\eea
for any $A>1$ and
\bea\label{rapid decay omega}
\zeta^j\frac{\partial^j}{\partial \zeta^j}\omega(q,\zeta)\ll
(\log Q)\min\left\{\frac{Q}{q},\frac{1}{|\zeta|}\right\}, \qquad j\geq 1.
\eea

\begin{lemma}\label{circle method}
Let $M=M_1M_2$ with $M_i$ prime, $i=1,2$.
Then
\bna
\delta(n)&=& \sum_{s=0,1}\;
\sum_{\ell=0}^{[\log Q/\log M_2]}
\frac{1}{Q}
\sum_{q\leq Q/M_2^\ell \atop (q,M)=1}
\frac{1}{q M_1M_2^\ell}\;
\sideset{}{^*}\sum_{a(\text{{\rm mod }} q M_1^{1-s} M_2^\ell)}
e\left(\frac{an}{q M_1^{1-s} M_2^\ell}\right)\qquad\qquad\\
&\times&
\int_{\mathbb{R}} U\left(\frac{\zeta}{M^\varepsilon}\right)\;
  \omega(q M_2^\ell,\zeta)\;
  e\left(\frac{n\zeta}{qQM_1M_2^\ell}\right)\mathrm{d}\zeta\\
&+&\sum_{\ell=0}^{[\log Q/\log M_2]}\;
\sum_{t=1}^{[\log Q/\log M_1]}
\frac{1}{Q}
\sum_{q\leq Q/M_1^t M_2^{\ell}\atop (q,M)=1}\frac{1}{qM_1^{1+t}M_2^{\ell}}\;
\sideset{}{^*}\sum_{a(\text{{\rm mod }} qM_1^{1+t}M_2^{\ell})}
e\left(\frac{an}{qM_1^{1+t}M_2^{\ell}}\right)\qquad\qquad\\
&\times&
\int_{\mathbb{R}}U\left(\frac{\zeta}{M^\varepsilon}\right)
  \omega(q M_1^t M_2^{\ell},\zeta)\;
  e\left(\frac{n\zeta}{q QM_1^{1+t}M_2^{\ell}}\right)
  \mathrm{d}\zeta+O\left(M^{-A}\right)
\ena
for any $A>0$, where $U(x)\in C_c^{\infty}(-2,2)$ is a smooth positive function satisfying
$U(x) =1$ if $x\in [-1,1]$ and $U^{(j)}(x)\ll_j 1$ for any integer $j\geq 0$.
\end{lemma}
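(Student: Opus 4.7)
I start from the Duke--Friedlander--Iwaniec identity \eqref{DFI's}. Since $\omega(q,\zeta)\ll|\zeta|^{-A}$ by \eqref{omega-h}, the contribution to the $\zeta$-integral from $|\zeta|>M^{\varepsilon}$ is $O(M^{-A})$, so I may freely insert the smooth cutoff $U(\zeta/M^{\varepsilon})$ into the integrand at negligible cost.

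The main manipulation is the Munshi conductor-lowering identity
$$
\delta(n)=\mathbf{1}_{M_1\mid n}\,\delta(n/M_1)=\frac{1}{M_1}\sum_{b(M_1)}e\!\left(\frac{nb}{M_1}\right)\delta(n/M_1),
$$
which is valid for every $n\in\mathbf{Z}$ because both sides equal $1$ at $n=0$ and vanish elsewhere. Expanding $\delta(n/M_1)$ by the truncated DFI formula turns the DFI phase into $e(na/(qM_1))$ and the integral exponent into $n\zeta/(qQM_1)$, yielding a triple sum over $q\le Q$, $(a,q)=1$, and $b\bmod M_1$.

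I then split the $q$-sum according to whether $(q,M_1)=1$ or $M_1\mid q$, and in each case further decompose by the $M_2$-part, writing $q=q_0M_2^{\ell}$ or $q=q_0M_1^{t}M_2^{\ell}$ with $t\ge 1$ and $(q_0,M_1M_2)=1$. In the first case, CRT identifies the pair $(a,b)$ bijectively with $\tilde a\bmod qM_1$ satisfying $(\tilde a,q)=1$; splitting this set into the coprime part $(\tilde a,qM_1)=1$ and the $M_1$-divisible part (where $\tilde a=M_1\alpha$ collapses the phase to $e(n\alpha/q)$) produces the two branches $s=0,1$ of the first displayed sum. In the second case, the $b$-sum equals $M_1\mathbf{1}_{M_1\mid n}$; after the $1/M_1$ cancels and the character sum is lifted from modulus $q$ to modulus $qM_1$, using that the Ramanujan sum $c_{qM_1}(n)$ automatically vanishes when $M_1\nmid n$ (so the residual $\mathbf{1}_{M_1\mid n}$ factor can be dropped), one obtains the modulus $qM_1^{1+t}M_2^{\ell}$ of the second displayed sum. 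The argument of $\omega$ retains its DFI value $qM_2^{\ell}$ or $qM_1^{t}M_2^{\ell}$, and the integral exponent $n\zeta/(qQM_1^{1+t-s}M_2^{\ell})$ comes from $n\zeta/(qQM_1)$ together with the $M_2$-decomposition.

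The principal bookkeeping challenge is the CRT organisation in both cases: identifying the $s\in\{0,1\}$ split from separating the coprime and $M_1$-divisible parts of the combined residue $\tilde a$, and using the vanishing of the Ramanujan sum to absorb the residual $\mathbf{1}_{M_1\mid n}$ factor in the $M_1\mid q$ case so that the second sum takes the clean stated form. Once these are handled, a renaming $q_0\to q$ aligns the summation ranges with those in the statement, and the error $O(M^{-A})$ originates solely from the initial cutoff of the $\zeta$-integral.
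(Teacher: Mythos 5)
Your outline follows the paper's strategy closely: truncate the $\zeta$-integral in the DFI identity, apply the conductor-lowering identity $\delta(n)=\mathbf{1}_{M_1\mid n}\delta(n/M_1)$ detected by additive characters, split the $q$-sum according to $(q,M_1)=1$ versus $M_1\mid q$, then strip out the $M_2$-part of $q$ to reach $(q,M)=1$. The treatment of the case $(q,M_1)=1$ (CRT combination $(a,b)\leftrightarrow\tilde a\bmod qM_1$ with $(\tilde a,q)=1$, split by whether $M_1\mid\tilde a$) is the same as the paper's argument, modulo reparametrisation. The one place you take a genuinely different route is the case $M_1\mid q$: you evaluate the $b$-sum directly to $M_1\mathbf{1}_{M_1\mid n}$ and then absorb the $\mathbf{1}_{M_1\mid n}$ by lifting the $a$-sum from modulus $q$ to modulus $qM_1$, invoking $c_{qM_1}(n)=0$ when $M_1\nmid n$ (which is valid here since $M_1\mid q$ forces $M_1^2\mid qM_1$). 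The paper instead keeps the $b$-sum, iteratively writes $q\to qM_1$, and recombines $a$ and $b$ by CRT at each stage to build up $a\bmod qM_1^{1+t}$; your Ramanujan-sum shortcut reaches the same formula in one step, which is a perfectly sound alternative and arguably cleaner.

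There is one bookkeeping slip: you claim the $\zeta$-integral carries the phase $e\bigl(n\zeta/(qQM_1^{1+t-s}M_2^{\ell})\bigr)$. For the branch $t=0$, $s=1$ this would give $e\bigl(n\zeta/(qQM_2^{\ell})\bigr)$, whereas the correct phase (and what the lemma states) is $e\bigl(n\zeta/(qQM_1M_2^{\ell})\bigr)$. The $M_1$-power in the $\zeta$-exponent is created by replacing $n\mapsto n/M_1$ inside the DFI oscillatory factor and is therefore always $M_1^1$ for the $(q,M_1)=1$ case; it is unaffected by the $s$-split, which only concerns the modulus of the $a$-sum, not the $\zeta$-integrand. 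The error is harmless to the overall argument (the $s=1$ branch is a lower-order term), but the unified formula $M_1^{1+t-s}$ as written is incorrect.
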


\begin{proof}
Note that the contribution from $|\zeta|\leq M^{-G}$  in \eqref{DFI's}
is negligible for $G>0$ sufficiently large and by the first property in \eqref{omega-h}, we can restrict $\zeta$ in the range
$|\zeta|\leq M^{\varepsilon}$ up to an negligible error. So we can
insert a smooth partition of unity for the $\zeta$-integral in \eqref{DFI's} and write
\bna
\delta(n)=\frac{1}{Q}\sum_{q\leq Q} \;\frac{1}{q}\;
\sideset{}{^*}\sum_{a\bmod{q}}e\left(\frac{na}{q}\right)
\int_\mathbb{R}U\left(\frac{\zeta}{M^\varepsilon}\right)
\omega(q,\zeta) e\left(\frac{n\zeta}{qQ}\right)\mathrm{d}\zeta+O_A(M^{-A})
\ena
for any $A>0$, where $U(x)\in \mathcal{C}_c^{\infty}(1,2)$
satisfying $U^{(j)}(x)\ll_j 1$ for any integer $j\geq 0$.
Define $\mathbf{1}_\mathscr{F}=1$ if $\mathscr{F}$ is true, and is 0 otherwise.
Following Munshi \cite{Munshi1} we write $\delta(n)$ as
$\delta(n/M_1)\mathbf{1}_{M_1|n}$ and detect the congruence by additive
characters to get
\bea\label{delta}
\delta(n)
=\frac{1}{QM_1}\sum_{q\leq Q}\frac{1}{q}
\sum_{b(\text{{\rm mod }} M_1)}\;
\sideset{}{^*}\sum_{a(\text{{\rm mod }} q)}e\left(n\frac{a+bq}{qM_1}\right)
\int_{\mathbb{R}}U\left(\frac{\zeta}{M^\varepsilon}\right)
  \omega(q,\zeta)e\left(\frac{n\zeta}{qQM_1}\right)\mathrm{d}\zeta+O_A(M^{-A}),
\eea
where the first term can be further written as $\delta_1(n)+\delta_2(n)$ with
\bna
\delta_1(n)&=&\frac{1}{QM_1}\sum_{q\leq Q\atop (q,M_1)=1}\frac{1}{q}
\sum_{b(\text{{\rm mod }} M_1)}\;
\sideset{}{^*}\sum_{a(\text{{\rm mod }} q)}e\left(n\frac{a+bq}{qM_1}\right)
\int_{\mathbb{R}}U\left(\frac{\zeta}{M^\varepsilon}\right)
  \omega(q,\zeta)e\left(\frac{n\zeta}{qQM_1}\right)\mathrm{d}\zeta,\\
\delta_2(n)&=&\frac{1}{QM_1}\sum_{q\leq Q\atop M_1|q}\frac{1}{q}
\sum_{b(\text{{\rm mod }} M_1)}\;
\sideset{}{^*}\sum_{a(\text{{\rm mod }} q)}e\left(n\frac{a+bq}{qM_1}\right)
\int_{\mathbb{R}}U\left(\frac{\zeta}{M^\varepsilon}\right)
  \omega(q,\zeta)e\left(\frac{n\zeta}{qQM_1}\right)\mathrm{d}\zeta.
\ena
For $\delta_1(n)$, making a change of variable $a\to aM_1$, we have
\bna
\delta_1(n)
&=&\frac{1}{QM_1}\sum_{q\leq Q\atop (q,M_1)=1}\frac{1}{q}
\sum_{b(\text{{\rm mod }} M_1)}\;
\sideset{}{^*}\sum_{a(\text{{\rm mod }} q)}e\left(n\frac{aM_1+bq}{qM_1}\right)
\int_{\mathbb{R}}U\left(\frac{\zeta}{M^\varepsilon}\right)
  \omega(q,\zeta)e\left(\frac{n\zeta}{qQM_1}\right)\mathrm{d}\zeta\\
&=&\frac{1}{QM_1}\sum_{q\leq Q\atop (q,M_1)=1}\frac{1}{q}\;
\sideset{}{^*}\sum_{b(\text{{\rm mod }} M_1)}\;
\sideset{}{^*}\sum_{a(\text{{\rm mod }} q)}e\left(n\frac{aM_1+bq}{qM_1}\right)
\int_{\mathbb{R}}U\left(\frac{\zeta}{M^\varepsilon}\right)
  \omega(q,\zeta)e\left(\frac{n\zeta}{qQM_1}\right)\mathrm{d}\zeta\\
&&+\frac{1}{QM_1}\sum_{q\leq Q\atop (q,M_1)=1}\frac{1}{q}\;
\sideset{}{^*}\sum_{a(\text{{\rm mod }} q)}e\left(\frac{an}{q}\right)
\int_{\mathbb{R}}U\left(\frac{\zeta}{M^\varepsilon}\right)
  \omega(q,\zeta)e\left(\frac{n\zeta}{qQM_1}\right)\mathrm{d}\zeta.
\ena
Observe that in the first sum of the last expression, as $a$ varies over a set of representatives of
the reduced residue classes modulo $q$ and $b$ varies over a set of representatives of the reduced residue
classes modulo $M_1$, $aM_1 + bq$ varies over a set of representatives of the reduced
residue classes modulo $qM_1$. Then
\bea\label{delta 1}
\delta_1(n)&=&\frac{1}{QM_1}\sum_{q\leq Q\atop (q,M_1)=1}\frac{1}{q}
\;\sideset{}{^*}\sum_{a(\text{{\rm mod }} qM_1)}
e\left(\frac{an}{qM_1}\right)
\int_{\mathbb{R}}U\left(\frac{\zeta}{M^\varepsilon}\right)
  \omega(q,\zeta)e\left(\frac{n\zeta}{qQM_1}\right)\mathrm{d}\zeta\nonumber\\
&&+\frac{1}{QM_1}\sum_{q\leq Q\atop (q,M_1)=1}\frac{1}{q}\;
\sideset{}{^*}\sum_{a(\text{{\rm mod }} q)}e\left(\frac{an}{q}\right)
\int_{\mathbb{R}}U\left(\frac{\zeta}{M^\varepsilon}\right)
  \omega(q,\zeta)e\left(\frac{n\zeta}{qQM_1}\right)\mathrm{d}\zeta.
\eea
For $\delta_2(n)$, similarly making a change of variable $q\to qM_1$, we have
\bea\label{delta 2}
\delta_2(n)&=&\frac{1}{QM_1^{2}}\sum_{q\leq Q/M_1}\frac{1}{q}
\sum_{b(\text{{\rm mod }} M_1)}\;
\sideset{}{^*}\sum_{a(\text{{\rm mod }} qM_1)}e\left(n\frac{a+bM_1q}{qM_1^{2}}\right)
\int_{\mathbb{R}}U\left(\frac{\zeta}{M^\varepsilon}\right)
  \omega(M_1q,\zeta)e\left(\frac{n\zeta}{qQM_1^{2}}\right)\mathrm{d}\zeta\nonumber\\
&=&\frac{1}{QM_1^{2}}\sum_{q\leq Q/M_1\atop (q,M_1)=1}\frac{1}{q}
\sum_{b(\text{{\rm mod }} M_1)}\;
\sideset{}{^*}\sum_{a(\text{{\rm mod }} qM_1)}e\left(n\frac{a+bM_1q}{qM_1^{2}}\right)
\int_{\mathbb{R}}U\left(\frac{\zeta}{M^\varepsilon}\right)
  \omega(M_1q,\zeta)e\left(\frac{n\zeta}{qQM_1^{2}}\right)\mathrm{d}\zeta\nonumber\\
&&+\frac{1}{QM_1^{3}}\sum_{q\leq Q/M_1^2}\frac{1}{q}
\sum_{b(\text{{\rm mod }} M_1)}\;
\sideset{}{^*}\sum_{a(\text{{\rm mod }} qM_1^2)}e\left(n\frac{a+bM_1^2q}{qM_1^3}\right)
\int_{\mathbb{R}}U\left(\frac{\zeta}{M^\varepsilon}\right)
  \omega(M_1^2q,\zeta)e\left(\frac{n\zeta}{qQM_1^3}\right)\mathrm{d}\zeta\nonumber\\
&=&\sum_{t=1}^{[\log Q/\log M_1]}
\frac{1}{Q}\sum_{q\leq Q/M_1^t\atop (q,M_1)=1}\frac{1}{qM_1^{1+t}}
\sideset{}{^*}\sum_{a(\text{{\rm mod }} qM_1^{1+t})}
e\left(\frac{an}{qM_1^{1+t}}\right)
\int_{\mathbb{R}}U\left(\frac{\zeta}{M^\varepsilon}\right)
  \omega(M_1^tq,\zeta)e\left(\frac{n\zeta}{qQM_1^{1+t}}\right)\mathrm{d}\zeta.\nonumber\\
\eea
By \eqref{delta}-\eqref{delta 2}, we can therefore write
\bna
\delta(n)&=& \sum_{s=0,1}\frac{1}{Q}\sum_{q\leq Q\atop (q,M_1)=1}\frac{1}{qM_1}
\;\sideset{}{^*}\sum_{a(\text{{\rm mod }} qM_1^{1-s})}
e\left(\frac{an}{qM_1^{1-s}}\right)
\int_{\mathbb{R}}U\left(\frac{\zeta}{M^\varepsilon}\right)
  \omega(q,\zeta)e\left(\frac{n\zeta}{qQM_1}\right)\mathrm{d}\zeta\nonumber\\
&&+\sum_{t=1}^{[\log Q/\log M_1]}
\frac{1}{Q}\sum_{q\leq Q/M_1^t\atop (q,M_1)=1}\frac{1}{qM_1^{1+t}}
\sideset{}{^*}\sum_{a(\text{{\rm mod }} qM_1^{1+t})}
e\left(\frac{an}{qM_1^{1+t}}\right)
\nonumber\\
&&
\int_{\mathbb{R}}U\left(\frac{\zeta}{M^\varepsilon}\right)
  \omega(q M_1^t,\zeta)
  e\left(\frac{n\zeta}{qQM_1^{1+t}}\right)\mathrm{d}\zeta
   +O\left(M^{-A}\right).
\ena
With the above procedure repeated, we further transform the above sums into a
form of summing over terms with $(q,M_1M_2)=1$. More precisely, we have
\bna
\delta(n)&=& \sum_{s=0,1}\;
\sum_{\ell=0}^{[\log Q/\log M_2]}
\frac{1}{Q}
\sum_{q\leq Q/M_2^\ell \atop (q,M)=1}
\frac{1}{q M_1M_2^\ell}\;
\sideset{}{^*}\sum_{a(\text{{\rm mod }} q M_1^{1-s} M_2^\ell)}
e\left(\frac{an}{q M_1^{1-s} M_2^\ell}\right)\qquad\qquad\\
&\times&
\int_{\mathbb{R}} U\left(\frac{\zeta}{M^\varepsilon}\right)\;
  \omega(q M_2^\ell,\zeta)\;
  e\left(\frac{n\zeta}{qQM_1M_2^\ell}\right)\mathrm{d}\zeta\\
&+&\sum_{\ell=0}^{[\log Q/\log M_2]}\;
\sum_{t=1}^{[\log Q/\log M_1]}
\frac{1}{Q}
\sum_{q\leq Q/M_1^t M_2^{\ell}\atop (q,M)=1}\frac{1}{qM_1^{1+t}M_2^{\ell}}\;
\sideset{}{^*}\sum_{a(\text{{\rm mod }} qM_1^{1+t}M_2^{\ell})}
e\left(\frac{an}{qM_1^{1+t}M_2^{\ell}}\right)\qquad\qquad\\
&\times&
\int_{\mathbb{R}}U\left(\frac{\zeta}{M^\varepsilon}\right)
  \omega(q M_1^t M_2^{\ell},\zeta)\;
  e\left(\frac{n\zeta}{q QM_1^{1+t}M_2^{\ell}}\right)
  \mathrm{d}\zeta+O\left(M^{-A}\right).
\ena
This proves the lemma.
\end{proof}

Write $\mathcal{S}_r(N)$ in \eqref{initial sum} as
\bna
\mathcal{S}_r(N)=\sum_n\lambda_{\pi}(n,r)W\left(\frac{n}{N}\right)
\sum_{m}\lambda_g(m)\chi(m)V\left(\frac{m}{N}\right)\delta\left(m-n\right).
\ena
Then by apply Lemma \ref{circle method} with $Q=\sqrt{N/M_1}$, we get
\bna
\mathcal{S}_r(N)
&=&\sum_n\lambda_{\pi}(n,r)W\left(\frac{n}{N}\right)
\sum_{m}\lambda_g(m)\chi(m)V\left(\frac{m}{N}\right)\\
&\times& \bigg\{
\sum_{s=0,1}\;
\sum_{\ell=0}^{[\log Q/\log M_2]}
\frac{1}{Q}
\sum_{q\leq Q/M_2^\ell \atop (q,M)=1}
\frac{1}{q M_1M_2^\ell}\;
\sideset{}{^*}\sum_{a(\text{{\rm mod }} q M_1^{1-s} M_2^\ell)}
e\left(\frac{a (m-n)}{q M_1^{1-s} M_2^\ell}\right)
\\
&&\int_{\mathbb{R}} U\left(\frac{\zeta}{M^\varepsilon}\right)\;
  \omega(q M_2^\ell,\zeta)\;
  e\left(\frac{(m-n) \zeta}{qQM_1M_2^\ell}\right)\mathrm{d}\zeta
\\
&&\left.+\sum_{\ell=0}^{[\log Q/\log M_2]}\;
\sum_{t=1}^{[\log Q/\log M_1]}
\frac{1}{Q}
\sum_{q\leq Q/M_1^t M_2^{\ell}\atop (q,M)=1}\frac{1}{qM_1^{1+t}M_2^{\ell}}\;
\sideset{}{^*}\sum_{a(\text{{\rm mod }} qM_1^{1+t}M_2^{\ell})}
e\left(\frac{a (m-n)}{qM_1^{1+t}M_2^{\ell}}\right) \right.\\
&& \int_{\mathbb{R}}U\left(\frac{\zeta}{M^\varepsilon}\right)
  \omega(q M_1^t M_2^{\ell},\zeta)\;
  e\left(\frac{(m-n)\zeta}{q QM_1^{1+t}M_2^{\ell}}\right)
  \mathrm{d}\zeta
 \bigg\}  +O_A\left(M^{-A}\right),
\ena
where $W(x)\in C_c^{\infty}(1/2,5/2)$ satisfying
$W(x)=1$ for $x\in [1,2]$ and  $W^{(j)}(x)\ll_j 1$ for any integer $j\geq 0$.

In the above sum, we only consider the first term in braces with $s=0$ and $\ell=0$, that is,
\bna
\mathcal{S}_r^\flat(N)
&:=&\sum_n\lambda_{\pi}(n,r)W\left(\frac{n}{N}\right)
\sum_{m}\lambda_g(m)\chi(m)V\left(\frac{m}{N}\right)\\
&&\times
\frac{1}{Q}
\sum_{q\leq Q \atop (q,M)=1}
\frac{1}{q M_1}\;
\sideset{}{^*}\sum_{a(\text{{\rm mod }} q M_1 )}
e\left(\frac{a (m-n)}{q M_1 }\right)
\int_{\mathbb{R}} U\left(\frac{\zeta}{M^\varepsilon}\right)\;
  \omega(q,\zeta)\;
  e\left(\frac{(m-n) \zeta}{qQM_1}\right)\mathrm{d}\zeta.
 \ena
The other terms are lower order terms, which can be treated similarly.
We arrange $\mathcal{S}_r^\flat(N)$ as
\bna
\mathcal{S}_r^\flat(N)
&=&
\frac{1}{QM_1}\int_{\mathbb{R}}
U\left(\frac{\zeta}{M^\varepsilon}\right)
  \sum_{q\leq Q \atop (q,M)=1}\frac{\omega(q,\zeta)}{q}\;
  \sideset{}{^*}\sum_{a(\text{{\rm mod }} qM_1 )}
  \sum_m\lambda_g(m)\chi(m)e\left(\frac{am}{qM_1}\right)
  V\left(\frac{m}{N}\right)
  \\&&
  \times
  e\left(\frac{m\zeta}{qQM_1}\right)
  \sum_n\lambda_{\pi}(n,r)e\left(-\frac{an}{qM_1}\right)
  W\left(\frac{n}{N}\right)e\left(\frac{-n\zeta}{qQM_1}\right)\mathrm{d}\zeta\\
&:=&
\frac{1}{QM_1}\int_{\mathbb{R}}
U\left(\frac{\zeta}{M^\varepsilon }\right)
  \sum_{q\leq Q \atop (q,M)=1}\frac{\omega(q ,\zeta)}{q}
  \sideset{}{^*}\sum_{a(\text{{\rm mod }} qM_1 )}
  \mathscr{A}\times \mathscr{B} \hspace{3pt} \mathrm{d}\zeta,
\ena
where
$$
\mathscr{A}=\sum_m\lambda_g(m)\chi(m)
e\left(\frac{am}{qM_1}\right)
 V\left(\frac{m}{N}\right)e\left(\frac{m\zeta}{qQM_1}\right),
$$
and
$$
\mathscr{B}=\sum_n\lambda_{\pi}(n,r)
e\left(-\frac{an}{qM_1}\right)
 W\left(\frac{n}{N}\right)e\left(\frac{-n\zeta}{qQM_1}\right).
$$

\subsection{Voronoi formulas}
Next we transform $\mathscr{A}$ and $\mathscr{B}$ by $\rm GL_2$ and $\rm GL_3$
Voronoi formulas, respectively, and obtain the following results.

\begin{lemma}\label{GL2 lemma}
$\mathscr{A}$ is approximately $\mathscr{A}_1+\mathscr{A}_2+O_A(M^{-A})$
for any $A>0$, where
\bna
\mathscr{A}_1=\frac{ N^{1/2}}{\tau(\overline{\chi})}\sum_\pm
\sum_{c(\text{{\rm mod }} M)\atop c\not\equiv -\overline{q}aM_2\bmod M_1}
\overline{\chi}(c)
\sum_{m\leq M^{1+\varepsilon} M_2}\frac{\lambda_g(m)}{m^{1/2}}
e\left(\pm\frac{\overline{aM_2+cq}}{qM }m\right)
\mathfrak{I}^\pm\left(\frac{mN}{q^2 M^2 },q,\zeta\right),
\ena
and
\bna
\mathscr{A}_2=
\frac{N^{1/2}}{\tau(\overline{\chi})}
\sum_\pm
\sum_{c(\bmod M)\atop c\equiv-\overline{q}aM_2 \bmod M_1}\overline{\chi}(c)
\sum_{m\leq M_2^{2+\epsilon}/M_1}
\frac{\lambda_g(m)}{m^{1/2}}
e\left(\pm\frac{\overline{(aM_2+cq)/M_1}}{qM_2}m\right)
\mathfrak{I}^\pm\left(\frac{mN}{q^2M_2^2}, q,\zeta\right).
\ena
Here $\tau(\chi)$ is the Gauss sum, for $x\ll M^{\varepsilon}$,
\bna
\mathfrak{I}^\pm\left(x,q,\zeta\right)=x^{1/2}\int_0^{\infty}V(y)e\left(\frac{\zeta N y}{qQM_1 }\right)
\mathbf{J}_g^{\pm}(4\pi\sqrt{xy})\mathrm{d}y,
\ena
and for $x\gg M^{\varepsilon}$,
\bna
\mathfrak{I}^\pm\left(x,q,\zeta\right)=x^{1/4} \int_0^\infty y^{-1/4}V(y)
e\left(\frac{\zeta N y}{qQM_1 }\pm 2 \sqrt{xy}\right)\mathrm{d}y.
\ena
\end{lemma}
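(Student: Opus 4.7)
The plan is to decouple $\chi(m)$ from the additive twist via the Gauss-sum expansion $\chi(m) = \tau(\bar\chi)^{-1}\sum_{c\bmod M}\bar\chi(c)e(cm/M)$ and then feed the resulting $m$-sum into the standard $\rm GL_2$ Voronoi summation formula for $g$. After inserting the Gauss sum, the combined additive frequency in $m$ becomes $a/(qM_1) + c/M = (aM_2+cq)/(qM)$, so $\mathscr{A}$ is rewritten as
\[
\mathscr{A} = \frac{1}{\tau(\bar\chi)}\sum_{c\bmod M}\bar\chi(c) \sum_m \lambda_g(m)\, e\!\left(\frac{(aM_2+cq)m}{qM}\right) V\!\left(\frac{m}{N}\right) e\!\left(\frac{m\zeta}{qQM_1}\right).
\]

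The next step is to determine the true conductor of the inner additive character. Since $(q,M)=1$ and $(a,qM_1)=1$, the numerator $aM_2+cq$ is automatically coprime to $q$; moreover $\bar\chi(c)$ forces $(c,M)=1$, so $aM_2+cq\equiv cq\pmod{M_2}$ is coprime to $M_2$ as well. The only possible cancellation is against $M_1$, and $M_1 \mid aM_2+cq$ holds if and only if $c \equiv -\bar q\, aM_2\pmod{M_1}$. I would therefore partition the $c$-sum into a generic range, where the Voronoi modulus remains $qM$, and an exceptional range, where $aM_2+cq = M_1\cdot(aM_2+cq)/M_1$ and the Voronoi modulus drops to $qM_2$ with reduced numerator $(aM_2+cq)/M_1$, which by the same coprimality checks is a unit modulo $qM_2$. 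Applying the $\rm GL_2$ Voronoi formula for the level-one cusp form $g$ in each case produces the dual phases $e(\pm \overline{aM_2+cq}\, m/(qM))$ and $e(\pm\overline{(aM_2+cq)/M_1}\, m/(qM_2))$ respectively, matching $\mathscr{A}_1$ and $\mathscr{A}_2$ on the nose.

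It remains to record the dual integral and to truncate the dual sum. In both cases the Voronoi transform produces an integral
\[
\int_0^\infty V(y)\, e\!\left(\frac{\zeta Ny}{qQM_1}\right) \mathbf{J}_g^\pm(4\pi\sqrt{xy})\, dy
\]
with $x = mN/(qM)^2$ in $\mathscr{A}_1$ and $x = mN/(qM_2)^2$ in $\mathscr{A}_2$, while the Voronoi pre-factor $N/c'$ together with $x^{1/2}/m^{1/2} = N^{1/2}/c'$ rearranges into the stated normalisation $N^{1/2}/(\tau(\bar\chi)\, m^{1/2})$. When $x\ll M^\varepsilon$ this is exactly the first form of $\mathfrak{I}^\pm(x,q,\zeta)$. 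When $x\gg M^\varepsilon$ I would insert the standard oscillatory asymptotic of $\mathbf{J}_g^\pm$ to extract the phase $\pm 2\sqrt{xy}$ and the smooth amplitude $(xy)^{-1/4}$, giving the second form. The truncation of the $m$-sum then follows from repeated integration by parts in $y$: the Bessel phase derivative is of size $\sqrt{x}$, which dominates the competing contribution $\zeta N/(qQM_1) \ll M^\varepsilon N/(qQM_1)$ once $\sqrt{x}\gg M^\varepsilon N/(qQM_1)$, producing the cutoff $m \ll (c')^2 M^\varepsilon N/(qQM_1)^2$; taking $c' = qM$ or $qM_2$ and using $Q^2 M_1 = N$ yields the stated ranges $m\ll M^{1+\varepsilon}M_2$ and $m\ll M_2^{2+\varepsilon}/M_1$.

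The most delicate bookkeeping lies in the exceptional case: one must verify that $(aM_2+cq)/M_1$ is genuinely a unit modulo $qM_2$, so that the $\rm GL_2$ Voronoi identity applies with the clean denominator $qM_2$ and produces the inverse $\overline{(aM_2+cq)/M_1}$, and that the residue condition $c \equiv -\bar q\, aM_2\pmod{M_1}$ is compatible with $(c,M_1)=1$ required by $\bar\chi_1(c)\neq 0$; both follow because $\bar q\, a M_2$ is itself a unit modulo $M_1$. Once this is in place, the rest is the routine execution of the $\rm GL_2$ Voronoi identity together with the standard asymptotic analysis of $\mathbf{J}_g^\pm$.
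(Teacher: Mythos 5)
Your proposal follows essentially the same route as the paper: expand $\chi(m)$ by the Gauss sum, combine the additive frequencies into $(aM_2+cq)/(qM)$, split according to whether $M_1\mid aM_2+cq$ (equivalently $c\equiv -\bar q\,aM_2\bmod M_1$), apply the level-one $\mathrm{GL}_2$ Voronoi formula with modulus $qM$ or $qM_2$ in the two cases, and truncate the dual $m$-sum via the stationary-phase comparison $\sqrt{x}\asymp \zeta N/(qQM_1)$ with $Q^2M_1=N$. The normalization bookkeeping ($N/c'$ combined with $x^{1/2}/m^{1/2}$ giving $N^{1/2}/m^{1/2}$) and the coprimality checks are also as in the paper, so this is a faithful reconstruction.
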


\begin{lemma}\label{GL3 lemma}
We have
\bna
\mathscr{B}&=&\frac{N^{1/2}}{q^{1/2}M_1^{1/2} r^{1/2}}\sum_{\pm}\sum_{n_{1}|qM_1  r}\;
\sum_{n_1^2n_2\leq \frac{N^{2+\varepsilon}r}{Q^3}}
\frac{\lambda_{\pi}(n_1,n_2)}{n_2^{1/2}}
  S\left(-r\overline{a},\pm n_{2};\frac{qM_1 r}{n_{1}}\right)\nonumber\\&&
  \qquad\qquad\qquad\qquad\qquad\qquad\qquad
\times\mathfrak{J}^{\pm}\left(\frac{n_{1}^{2}n_{2}}{q^3M_1^3 r},q,\zeta\right)+O(M^{-A})
\ena
for any $A>0$, where
\bna
\mathfrak{J}^{\pm}
\left(x,q,\zeta\right)=\frac{1}{2\pi}
\int_{\mathbb{R}}(Nx)^{-i\tau}
\gamma_{\pm}\left(-\frac{1}{2}+i\tau\right) W^{\dag}\left(\frac{\zeta N}{qQM_1},\frac{1}{2}-i\tau\right)\mathrm{d}\tau.
\ena
\end{lemma}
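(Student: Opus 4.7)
The plan is to apply the $\rm GL_3$-Voronoi summation formula to the $n$-sum in $\mathscr{B}$. Since $(a,qM_1)=1$ by the reduced residue condition, the standard Voronoi identity for $\rm GL_3$ (as in Miller--Schmid or Goldfeld--Li) gives
\bna
\mathscr{B} = qM_1 \sum_\pm \sum_{n_1\mid qM_1 r} \sum_{n_2\geq 1} \frac{\lambda_\pi(n_1,n_2)}{n_1 n_2}\, S\!\left(-r\overline{a},\pm n_2;\frac{qM_1 r}{n_1}\right)\Psi^\pm\!\left(\frac{n_1^2 n_2}{(qM_1)^3 r}\right),
\ena
where the integral transform is
\bna
\Psi^\pm(x) = \frac{1}{2\pi i}\int_{(\sigma)} x^{-s}\,\gamma_\pm(s)\,\tilde\psi(-s)\,\mathrm{d}s,
\ena
with $\gamma_\pm(s)$ the gamma factors from the functional equation of $L(s,\pi)$ and $\tilde\psi$ the Mellin transform of the weight $\psi(y):=W(y/N)\,e(-y\zeta/(qQM_1))$.

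Next I would compute $\tilde\psi$ explicitly: the substitution $y=Nt$ yields $\tilde\psi(s)=N^s\,W^{\dag}(\zeta N/(qQM_1),s)$, where $W^{\dag}(u,s):=\int_0^\infty W(t)\,e(-ut)\,t^{s-1}\,\mathrm{d}t$. Shifting the contour to $\Re(s)=-1/2$ (permissible, since the poles of $\gamma_\pm(s)$ lie to the right under the Ramanujan-type bounds available for $\rm GL_3$) and setting $s=-1/2+i\tau$, the factor $(xN)^{1/2}$ can be pulled out of the $\tau$-integral to produce the identity
\bna
\Psi^\pm(x) = (xN)^{1/2}\,\mathfrak{J}^\pm(x,q,\zeta),
\ena
with $\mathfrak{J}^\pm$ exactly as in the statement. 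Substituting back and simplifying the telescoping factor $qM_1\cdot n_1^{-1}n_2^{-1}\cdot (xN)^{1/2}$ using $x=n_1^2n_2/((qM_1)^3 r)$ yields the prefactor $N^{1/2}/(q^{1/2}M_1^{1/2}r^{1/2})$ and the weight $n_2^{-1/2}$ displayed in the lemma.

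Finally I would truncate the $n_2$-sum at $n_1^2 n_2\leq N^{2+\varepsilon}r/Q^3$. Heuristically, viewed directly as a Bessel-type transform, the kernel of $\Psi^\pm$ carries phase $\pm 3(xy)^{1/3}$ which combines with the oscillation $e(-y\zeta/(qQM_1))$ in $\psi$ to produce, via stationary phase in $y$, a critical point at $y\asymp x^{1/2}(qQM_1/\zeta)^{3/2}$; requiring this to lie in the support $y\asymp N$ forces $x\lesssim N^2\zeta^3/(qQM_1)^3\leq N^{2+\varepsilon}/(qQM_1)^3$, which via $x=n_1^2n_2/((qM_1)^3 r)$ is exactly the claimed cutoff. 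Making this rigorous requires repeated integration by parts in the $\tau$-integral representation of $\mathfrak{J}^\pm$, balancing the Stirling asymptotic of $\gamma_\pm(-1/2+i\tau)$ against the behaviour of $W^{\dag}(u,1/2-i\tau)$, which in the nonoscillatory regime $u:=\zeta N/(qQM_1)\ll 1$ decays rapidly in $\tau$ but in the oscillatory regime $u\gg 1$ is concentrated on the narrow window $|\tau|\asymp u$ by its own stationary phase. Treating these two regimes uniformly over $q\leq Q$ and $|\zeta|\leq M^\varepsilon$, and combining them to produce a clean cutoff at $n_1^2 n_2\leq N^{2+\varepsilon}r/Q^3$, is the main technical obstacle; once this bookkeeping is carried out, the lemma follows.
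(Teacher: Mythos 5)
The first half of your argument is correct and identical in structure to the paper's: you invoke the $\rm GL_3$ Voronoi formula with modulus $qM_1$, write the integral transform as a Mellin integral, substitute $y=Nt$ to produce $\widetilde\psi(-s)=N^{-s}W^\dag\!\bigl(\zeta N/(qQM_1),-s\bigr)$, shift to $\Re(s)=-1/2$, and the algebra $qM_1\cdot n_1^{-1}n_2^{-1}\cdot(xN)^{1/2}=N^{1/2}/(q^{1/2}M_1^{1/2}r^{1/2}n_2^{1/2})$ checks out and gives the stated prefactor.

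However, your truncation of the $n_2$-sum has a genuine gap, and you concede it. You sketch a stationary-phase heuristic based on the Bessel-kernel phase $\pm 3(xy)^{1/3}$, obtain the cutoff condition, and then state that making it rigorous requires a delicate balancing of Stirling asymptotics against $W^\dag$ in two regimes, which you describe as ``the main technical obstacle''. You do not carry this out. But in fact no stationary phase is needed and the truncation argument is much simpler than what you envision: before shifting the contour to $\Re(s)=-1/2$, keep $\sigma=\Re(s)$ as a large free parameter. By Stirling $\gamma_\pm(\sigma+i\tau)\ll_{\pi,\sigma}(1+|\tau|)^{3(\sigma+1/2)}$, and by repeated integration by parts in $t$ (using $W^{(j)}\ll_j 1$ and $|\zeta|\ll M^\varepsilon$) one has $W^\dag(\zeta N/(qQM_1),-s)\ll \min\{1,(N/(qQM_1|\tau|))^j\}$ for any $j$. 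Estimating the $\tau$-integral therefore yields
\[
\Phi_\pm(x)\ll (xN)^{-\sigma}\Bigl(\frac{N}{qQM_1}\Bigr)^{3\sigma+5/2}
=\Bigl(\frac{N}{qQM_1}\Bigr)^{5/2}\Bigl(\frac{Q^3q^3M_1^3 x}{N^2}\Bigr)^{-\sigma}.
\]
Taking $\sigma$ large gives arbitrary power savings whenever $Q^3q^3M_1^3x/N^2>M^\varepsilon$, i.e.\ whenever $n_1^2n_2>N^{2+\varepsilon}r/Q^3$, with no case analysis over oscillatory and nonoscillatory regimes. Only for the surviving range $n_1^2n_2\le N^{2+\varepsilon}r/Q^3$ does one move the contour to $\sigma=-1/2$ and read off $\mathfrak J^\pm$. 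So while your Voronoi application is right, you should replace the speculative stationary-phase plan by this direct $\sigma$-shift bound; as it stands the lemma is not proved.
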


\bigskip

By Lemmas \ref{GL2 lemma} and \ref{GL3 lemma} we have
\bea\label{initial decomposition}
\mathcal{S}_r^{\flat}(N)=\mathbf{S}_1+\mathbf{S}_2+O_A(M^{-A}),
\eea
where $\mathbf{S}_j=\mathbf{S}_j(r,N)$, $j=1,2$, are defined by
\bea\label{S_j}
\mathbf{S}_j&=&\frac{N}{r^{1/2}Q M_1^{3/2}\tau(\overline{\chi})}\sum_{\pm}\sum_{\pm}
\mathop{\sum\sum}_{n_1^2n_2\leq \frac{N^{2+\varepsilon}r}{Q^3}}
  \frac{\lambda_{\pi}(n_1,n_2)}{n_2^{1/2}}
   \sum_{q\leq Q, (q,M)=1\atop n_1|qM_1r}
    \frac{1}{q^{3/2}}\nonumber\\
 &\times& \sum_{m\leq N_j^\ast}\frac{\lambda_g(m)}{m^{1/2}}\,
 \mathfrak{C}_j(n_{1},\pm n_{2}, \pm m, q)\,
 \mathfrak{R}^{\pm,\pm}\left( Y_j, \frac{n_{1}^2 n_{2}}{ q^3 M_1^3r},q\right)
\eea
with
\bea\label{N_j}
N_1^\ast= M^{1+\varepsilon}M_2,\qquad N_2^\ast= M_2^{2+\varepsilon}/M_1,\qquad
Y_1=\frac{mN}{q^2 M^2},\qquad Y_2=\frac{mN}{q^2 M_2^2},
\eea
\bea\label{C_1}
\mathfrak{C}_1(n_{1}, n_{2},  m, q)= \sideset{}{^*} \sum_{ a \bmod qM_1}
\sum_{c(\text{{\rm mod }} M)\atop c\not\equiv -\overline{q}aM_2\bmod M_1}\overline{\chi}(c)
S\left(-r\overline{a}, n_{2};\frac{qM_1 r}{n_{1}}\right)
  e\left(\frac{\overline{aM_2+cq}}{qM}m\right),
  \eea
  \bea\label{C_2}
\mathfrak{C}_2(n_{1}, n_{2},  m, q)=\sideset{}{^*} \sum_{ a \bmod qM_1}
\sum_{c(\text{{\rm mod }} M)\atop c\equiv-\overline{q}aM_2 \bmod M_1}
\overline{\chi}(c)
S\left(-r\overline{a}, n_{2};\frac{qM_1 r}{n_{1}}\right)
  e\left(\frac{\overline{(aM_2+cq)/M_1}}{qM_2}m\right)
\eea
and
\bea\label{R integral}
\mathfrak{R}^{\pm,\pm}(y_1,y_2,q)=
\int_{\mathbb{R}}U\left(\frac{\zeta}{M^\varepsilon}\right)
  \omega(q,\zeta)\mathfrak{I}^\pm\left(y_1,q,\zeta\right)
  \mathfrak{J}^{\pm}\left(y_2,q,\zeta\right)
  \mathrm{d}\zeta.
\eea
Reducing the $n_1,n_2$-sum, $q$-sum and $m$-sum into dyadic intervals, we have
\bea\label{S_j after reducing}
\mathbf{S}_j=\sum_{\pm}\sum_{\pm} \sum_{ L\ll\frac{N^{2+\varepsilon}r}{Q^3} \atop L \,\mathrm{dyadic}}
\sum_{N_j \ll N_j^ \ast \atop N_j \,\mathrm{dyadic}}
\sum_{ C\ll Q \atop C \,\mathrm{dyadic}} \mathbf{S}_j\left( C,L,N_j,\pm,\pm\right),
\eea
where
\bna
 \mathbf{S}_j\left( C,L,N_j,\pm,\pm\right)=
 \frac{N}{r^{1/2}Q M_1^{3/2} \tau(\overline{\chi}) } \mathop{\sum\sum}_{n_1^2n_2\sim L}
 \frac{\lambda_{\pi}(n_1,n_2)}{n_2^{1/2}}
\sum_{ q\sim C, (q,M)=1 \atop n_{1}|qM_1  r } \frac{1}{q^{3/2}}&&\nonumber \\
  \qquad\qquad\qquad\qquad\qquad\qquad\qquad
 \times \sum_{m\sim N_j}\frac{\lambda_g(m)}{m^{1/2}}\,
 \mathfrak{C}_j(n_{1},\pm n_{2}, \pm m, q)\,
 \mathfrak{R}^{\pm,\pm}\left( Y_j, \frac{n_{1}^2 n_{2}}{ q^3 M_1^3r},q\right).
\ena

We only consider the case $ (n_1,M_1)=1$,
since the contribution from the case $ M_1 | n_1$ is smaller. Under this assumption and the assumption in
\eqref{asummption on r},
the condition $n_1| q M_1 r$ is reduced to $n_1|qr$ and by abusing notations we still denote
the contribution from terms with $ (n_1,M_1)=1$ by
$ \mathbf{S}_j\left( C,L,N_j,\pm,\pm\right)$, i.e.,
\bea \label{S_j_CL}
 \mathbf{S}_j\left( C,L,N_j,\pm,\pm\right)=
 \frac{N}{r^{1/2}Q M_1^{3/2} \tau(\overline{\chi}) } \mathop{\sum\sum}_{n_1^2n_2\sim L}
 \frac{\lambda_{\pi}(n_1,n_2)}{n_2^{1/2}}
\sum_{ q\sim C, (q,M)=1 \atop n_{1}|qr } \frac{1}{q^{3/2}}&&\nonumber \\
  \qquad\qquad\qquad\qquad\qquad\qquad\qquad
 \times \sum_{m\sim N_j}\frac{\lambda_g(m)}{m^{1/2}}\,
 \mathfrak{C}_j(n_{1},\pm n_{2}, \pm m, q)\,
 \mathfrak{R}^{\pm,\pm}\left( Y_j, \frac{n_{1}^2 n_{2}}{ q^3 M_1^3r},q\right).
\eea

Before further analysis, we make a computation of the character sums
$ \mathfrak{C}_j(n_{1}, n_{2}, m, q)$, $j=1,2$. Set
\bea\label{Kl_2}
\mathrm{K}l_2(n; M_1)=\frac{1}{M_1^{1/2}}\sum_{x_1 x_2\equiv n (\bmod M_1)}\;
e\left(\frac{x_1+x_2}{M_1}\right)
\eea
and
\bea\label{L}
\mathrm{L}_{\alpha,\beta}(v; M_1)= \frac{1}{M_1^{1/2}} \sum_{b(\text{{\rm mod }} M_1)
\atop (b+\beta v , M_1)=1} \overline{\chi_1}(b)
e\left(\frac{\alpha \overline{b+\beta v}}{M_1}\right).
\eea
We have the following result.

\begin{lemma}\label{initial character sum}
We have
\bea\label{C1}
\mathfrak{C}_1(n_{1}, n_{2},  m, q)=
\chi_1(q) \chi_2(q^2 M_1 \overline{m})\tau( \chi_2) M_1
\mathfrak{B}(n_1,n_2,m;q)\mathfrak{D}(n_1,n_2,m,q;M_1)
\eea
and
\bea\label{C2}
\mathfrak{C}_2(n_{1}, n_{2},  m, q)=
\chi_1( q\overline{M_2 r} (qr/n_1)^2 )
\chi_2(q^2  \overline{mM_1})
\tau^2(\chi_1)\tau(\chi_2)
\mathfrak{B}(n_1,n_2,M_1^2m;q),
\eea
where
\bea\label{B-def}
\mathfrak{B}(n_1,n_2,m;q)=\sum_{d|q} d \mu\left(\frac{q}{d}\right)
\sideset{}{^*}\sum_{ u  \bmod qr/n_1 \atop { m \equiv M_2^2n_1 u \bmod d }}
e\left(\frac{n_2 \overline{M_1u}}{qr/n_1}\right)
\eea
and
\bea\label{D-def}
\mathfrak{D}(n_1,n_2,m,q;M_1)=\sideset{}{^*}\sum_{a(\text{{\rm mod }} M_1)}\;
\mathrm{L}_{m\overline{qM_2}, M_2}(a; M_1)
\mathrm{K}l_2(-r n_2 \overline{a (qr/n_1)^2}; M_1).
\eea

\end{lemma}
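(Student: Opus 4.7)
The plan is to verify both identities by systematically applying the Chinese Remainder Theorem to decompose every modulus appearing in $\mathfrak{C}_j$, and then to identify the resulting sub-sums with the named pieces $\mathfrak{B}$, $\mathfrak{D}$, and Gauss sums of $\chi_1,\chi_2$. The required coprimalities, namely $(q,M)=1$, $(M_1,M_2)=1$, $(r,M)=1$, and our standing $(n_1,M_1)=1$, make every factorization legitimate. Concretely, I would write $a\bmod qM_1$ via CRT as $(\alpha,\beta)$ with $\alpha\equiv a\bmod q$ and $\beta\equiv a\bmod M_1$, and $c\bmod M_1M_2$ as $(\gamma,\delta)$ with $\gamma\equiv c\bmod M_1$ and $\delta\equiv c\bmod M_2$, so that $\bar{\chi}(c)=\bar{\chi_1}(\gamma)\bar{\chi_2}(\delta)$ and the side condition $c\not\equiv-\bar{q}aM_2\bmod M_1$ (resp.\ $\equiv$) in \eqref{C_1}--\eqref{C_2} becomes a condition on $\gamma$ alone. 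Then I would apply twisted multiplicativity of Kloosterman sums to the coprime factorization $qM_1r/n_1=(qr/n_1)\cdot M_1$, splitting $S(-r\bar{a},n_2;qM_1r/n_1)$ into a product of Kloosterman sums with moduli $qr/n_1$ and $M_1$, and I would split the additive character $e(\overline{aM_2+cq}\,m/qM)$ (resp.\ $e(\overline{(aM_2+cq)/M_1}\,m/qM_2)$) via CRT into independent components at moduli $q$, $M_1$, $M_2$, using $aM_2+cq\equiv aM_2\bmod q$ and $\equiv cq\bmod M_2$.

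After these decompositions the character sum factors into independent $\alpha$-, $\beta$-, $\gamma$-, $\delta$-pieces. The $\delta$-sum combining $\bar{\chi_2}(\delta)$ with the $\bar\delta$-additive character modulo $M_2$ is a standard Gauss sum and, after the change of variable $\delta\to\bar\delta$, produces exactly $\chi_2(q^2M_1\bar{m})\tau(\chi_2)$. The $\alpha$-sum, combined with the Kloosterman sum modulo $qr/n_1$ and the $\bar\alpha$-additive character modulo $q$, is handled by opening the Kloosterman sum, exchanging the order of summation, and applying the Ramanujan identity $\sideset{}{^*}\sum_{\alpha\bmod q}e(\alpha t/q)=\sum_{d\mid(q,t)}d\,\mu(q/d)$; this reproduces the divisor-sum shape of $\mathfrak{B}(n_1,n_2,m;q)$ from \eqref{B-def}, and for $\mathfrak{C}_2$ gives $\mathfrak{B}(n_1,n_2,M_1^2m;q)$, where the extra $M_1^2$ arises from the CRT twist $\overline{M_1^2}$ built into the $q$-denominator character in the $\mathfrak{C}_2$ case.

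For $\mathfrak{C}_1$, the remaining $(\beta,\gamma)$-sum modulo $M_1$, carrying $\bar{\chi_1}(\gamma)$, the Kloosterman modulo $M_1$, and the additive character in $\overline{\beta M_2+\gamma q}\bmod M_1$, matches $M_1\,\mathfrak{D}(n_1,n_2,m,q;M_1)$ from \eqref{D-def}: the $\gamma$-sum is exactly the $\mathrm{L}_{\alpha,\beta}$ sum of \eqref{L} under an obvious parameter identification, while a change of variable converts the residual Kloosterman modulo $M_1$ into $M_1^{1/2}\mathrm{K}l_2(-rn_2\,\overline{\beta(qr/n_1)^2};M_1)$. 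For $\mathfrak{C}_2$ the $\gamma$-sum collapses since $\gamma$ is determined by $\beta$; the factor $\bar{\chi_1}(-\bar{q}\beta M_2)$ is pulled out, and the residual $\beta$-sum against the Kloosterman modulo $M_1$ is evaluated by opening the Kloosterman sum and applying the standard identity $\sideset{}{^*}\sum_{\beta}\chi_1(\beta)e(x\beta/M_1)=\bar{\chi_1}(x)\tau(\chi_1)$ twice, yielding $\tau(\chi_1)^2$ together with the character $\chi_1(q\,\overline{M_2r}\,(qr/n_1)^2)$ displayed in \eqref{C2}. The principal difficulty is bookkeeping: tracking every inverse produced by the CRT decomposition and by the Kloosterman twists, and verifying that the resulting $\chi_1$- and $\chi_2$-factors combine exactly into the expressions in \eqref{C1}--\eqref{C2}; no essentially new ideas are required.
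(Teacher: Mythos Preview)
Your proposal is correct and follows essentially the same route as the paper's own proof: the paper likewise writes $a\bmod qM_1$ and $c\bmod M$ via CRT (using the notation $a_1,a_2,c_1,c_2$ in place of your $\alpha,\beta,\gamma,\delta$), splits the Kloosterman sum by twisted multiplicativity at the coprime factorization $(qr/n_1)\cdot M_1$, evaluates the $c_2$-sum as the Gauss sum $\chi_2(q^2M_1\bar m)\tau(\chi_2)$, handles the $a_1$-sum via the Ramanujan identity to produce $\mathfrak{B}$, and in the $\mathfrak{C}_2$ case opens the Kloosterman modulo $M_1$ and applies the Gauss-sum identity twice to obtain $\tau(\chi_1)^2$. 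The only point you leave implicit is the change of variable (essentially $c_1\mapsto c_1 q^{-1}$ modulo $M_1$) that extracts the prefactor $\chi_1(q)$ in \eqref{C1}, but this is indeed part of the routine bookkeeping you flag at the end.
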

\begin{proof}
By \eqref{C_1}, we compute
\bna
\mathfrak{C}_1(n_{1}, n_{2},  m, q)&=& \sideset{}{^*} \sum_{ a \bmod qM_1}
\sum_{c(\text{{\rm mod }} M)\atop c\not\equiv -\overline{q}aM_2\bmod M_1}  \overline{\chi}(c)
S\left(-r\overline{aM_1}, n_{2}\overline{M_1};\frac{q r}{n_{1}}\right)\\
&&S\left(-r\overline{a} \overline{qr/n_1}, n_{2}\overline{qr/n_1}; M_1\right)
e\left(\frac{m\overline{\left(a M_2+ cq\right)q}}{M}\right)
e\left(\frac{m \overline{a M M_2}}{q}\right)\\
&=&
\sideset{}{^*}\sum_{a_1 \bmod q}\;S\left(-r\overline{a_1 M_1}, n_{2}\overline{M_1};q r/n_1\right)
e\left(\frac{m \overline{a_1 M_1 M_2^2}}{q}\right)
\sideset{}{^*} \sum_{ c_2 \bmod M_2} \overline{\chi_2}(c_2)
e\left(\frac{m \overline{c_2 q^2 M_1}}{M_2}\right)\\
&\times& \sideset{}{^*} \sum_{a_2 \bmod M_1}
\sideset{}{^*}\sum_{c_1(\text{{\rm mod }} M_1)
\atop c_1\not\equiv -\overline{q}a_2M_2\bmod M_1}  \overline{\chi_1}(c_1)
S\left(-r\overline{a_2} \overline{qr/n_1}, n_{2}\overline{qr/n_1}; M_1\right)
e\left(\frac{m\overline{\left(a_2 M_2+ c_1 q\right)q M_2}}{M_1}\right),
\ena
where the first sum over $ a_1 \bmod q$ equals
\bna
&&\sideset{}{^*} \sum_{u \bmod qr/n_1}
e\left(\frac{n_2 \overline{M_1u}}{qr/n_1}\right)
\sideset{}{^*}\sum_{a_1 \bmod q}e\left(\frac{m\overline{M_1M_2^2}-n_1\overline{M_1}u}{q}\overline{a_1}\right)\\
&=& \sum_{d|q} d \mu\left(\frac{q}{d}\right)
\sideset{}{^*}\sum_{ u  \bmod qr/n_1 \atop { m \equiv M_2^2n_1 u \bmod d }}
e\left(\frac{n_2 \overline{M_1u}}{qr/n_1}\right),
\ena
the second sum over $c_2 \bmod M_2$ is $ \chi_2(q^2 M_1 \overline{m})\tau(\chi_2)$ and
 and the first sum over $ a_1 \bmod q$ equals
\bna
&&\sideset{}{^*} \sum_{u \bmod qr/n_1} e\left(\frac{n_2 \overline{M_1}\overline{u}}{qr/n_1}\right)
\sideset{}{^*}\sum_{a_1 \bmod q}e\left(\frac{m\overline{MM_2}-n_1\overline{M_1}u}{q}\overline{a_1}\right)\\
&=& \sum_{d|q} d \mu\left(\frac{q}{d}\right) \sideset{}{^*}
\sum_{u  \bmod qr/n_1 \atop { m \equiv M_2^2n_1 u \bmod d }}
e\left(\frac{n_2 \overline{M_1}\overline{u}}{qr/n_1}\right).
\ena

Finally, the last double sum over $ a_2 \bmod M_1$ and $ c_1 \bmod M_1$ is equal to
\bna
&&\mathop{\sideset{}{^*}\sum_{a_2 (\text{{\rm mod }} M_1)}\;
\sideset{}{^*}\sum_{c_1(\text{{\rm mod }} M_1)}}_{(a_2 M_2+c_1 q, M_1)=1 }
\overline{\chi_1}(c_1)
S\left(-r\overline{a_2} \overline{qr/n_1}, n_{2}\overline{qr/n_1}; M_1\right)
e\left(\frac{m\overline{\left(a_2 M_2+ c_1 q\right)q M_2}}{M_1}\right)\\
&=& \chi_1(q)M_1^{\frac{1}{2}}\mathop{\sideset{}{^*}\sum_{a (\text{{\rm mod }} M_1)}\;
\sideset{}{^*}\sum_{b(\text{{\rm mod }} M_1)}}_{(b+aM_2, M_1)=1 }
\overline{\chi_1}(b)
e\left(\frac{m\overline{\left(b+aM_2\right)q M_2}}{M_1}\right)
\mathrm{K}l_2(-r n_2 \overline{a (qr/n_1)^2}; M_1)
\\
&=&
\chi_1(q) M_1\;
\sideset{}{^*}\sum_{a(\text{{\rm mod }} M_1)}\;
\mathrm{L}_{m\overline{qM_2}, M_2}(a; M_1)
\mathrm{K}l_2(-r n_2 \overline{a (qr/n_1)^2}; M_1)
\ena
with $\mathrm{K}l_2(n; M_1)$ and $\mathrm{L}_{\alpha,\beta}(v; M_1)$ defined in \eqref{Kl_2}
and \eqref{L}, respectively. Here we have used the following identity for the Ramanujan sum
\bna
R_q(b)= \sideset{}{^*}\sum_{a \bmod q}\; e\left( \frac{ba}{q} \right)=\sum_{d|(q,b)} d \mu\left(\frac{q}{d}\right).
\ena
Assembling the above computations, we obtain
\bna
\mathfrak{C}_1(n_{1}, n_{2},  m, q)=
\chi_1(q) \chi_2(q^2 M_1 \overline{m})\tau( \chi_2) M_1\mathfrak{B}(n_1,n_2,m;q)
\mathfrak{D}(n_1,n_2,m,q;M_1),
\ena
where $\mathfrak{B}(n_1,n_2,m;q)$ and $\mathfrak{D}(n_1,n_2,m,q;M_1)$ are defined in \eqref{B-def}
and \eqref{D-def}, respectively.

Similarly, by \eqref{C_2},
\bna
\mathfrak{C}_2(n_{1}, n_{2},  m, q)
&=& \sideset{}{^*} \sum_{ a \bmod qM_1}
\sum_{c(\text{{\rm mod }} M)\atop c\equiv -\overline{q}aM_2\bmod M_1}  \overline{\chi}(c)
S\left(-r\overline{aM_1}, n_{2}\overline{M_1};qr/n_1\right)S\left(-r\overline{a} \overline{qr/n_1}, n_{2}\overline{qr/n_1}; M_1\right)\\
&&
e\left(\frac{q\overline{\left(a M_2+ cq\right)/M_1}}{M_2}m\right)
e\left(\frac{M_2 \overline{\left(a M_2+ cq\right)/M_1}}{q}m\right)\\
&=&
\sideset{}{^*}\sum_{a_1 \bmod q}\;S\left(-r\overline{a_1 M_1}, n_{2}\overline{M_1};qr/n_1\right)
e\left(\frac{m M_1 \overline{a_1 M_2^2}}{q}\right)
\sideset{}{^*} \sum_{ c_2 \bmod M_2} \overline{\chi_2}(c_2)
e\left(\frac{m M_1 \overline{c_2 q^2}}{M_2}\right)\\
&\times& \sideset{}{^*} \sum_{a_2 \bmod M_1}
\overline{ \chi_1}( -\overline{q}a_2M_2)
S\left(-r\overline{a_2} \overline{qr/n_1}, n_{2}\overline{qr/n_1}; M_1\right),
\ena
where the first sum over $ a_1 \bmod q$ is
\bna
&&\sideset{}{^*} \sum_{u \bmod qr/n_1}
e\left(\frac{n_2 \overline{M_1u}}{qr/n_1}\right)
\sideset{}{^*}\sum_{a_1 \bmod q}
e\left(\frac{m M_1\overline{M_2^2}-n_1\overline{M_1}u}{q}\overline{a_1}\right)\\
&=& \sum_{d|q} d \mu\left(\frac{q}{d}\right)
\sideset{}{^*}\sum_{ u  \bmod qr/n_1
\atop { m \equiv \overline{M_1^2} M_2^2 n_1u \bmod d }}
e\left(\frac{n_2 \overline{M_1u}}{qr/n_1}\right)
=\mathfrak{B}(n_1,n_2,M_1^2m;q),
\ena
where $\mathfrak{B}(n_1,n_2,m;q)$ is defined in \eqref{B-def},
the second sum over $c_2 \bmod M_2$ equals $ \chi_2(q^2  \overline{mM_1})\tau(\chi_2)$ and the last sum over $ a_2 \bmod M_1$ is equal to
\bna
&&\chi_1(-q\overline{M_2})
\sideset{}{^*} \sum_{v \bmod M_1}
e\left(\frac{n_2 \overline{qr/n_1}\overline{v}}{M_1}\right)
\sideset{}{^*} \sum_{a_2 \bmod M_1}
\chi_1(\overline{a_2})
e\left(\frac{- r \overline{a_2} \overline{qr/n_1}v}{M_1}\right)\\
&=&
\chi_1(-q\overline{M_2}) \sideset{}{^*} \sum_{v \bmod M_1}
e\left(\frac{n_2 \overline{qr/n_1}\overline{v}}{M_1}\right)
\chi_1( -\overline{ v r}qr/n_1 )\tau(\chi_1)\\
&=&
\chi_1(-q\overline{M_2})
\chi_1( -\overline{ n_2 r}(qr/n_1)^2 )\tau^2(\chi_1)\\
&=&
\chi_1( q\overline{M_2 r} (qr/n_1)^2 )\tau^2(\chi_1).
\ena
Therefore
\bna
\mathfrak{C}_2(n_{1}, n_{2},  m, q)=
\chi_1( q\overline{M_2 r} (qr/n_1)^2 )
\chi_2(q^2  \overline{mM_1})
\tau^2(\chi_1)\tau(\chi_2)
\mathfrak{B}(n_1,n_2,M_1^2m;q).
\ena
This proves the lemma.
\end{proof}

\subsection{Cauchy-Schwarz and Poisson summation}

We return to the analysis of $\mathbf{S}_j\left( C,L,N_j,\pm,\pm\right)$ in
\eqref{S_j_CL}.
Applying the Cauchy-Schwarz inequality to $n_1, n_2$-sum in \eqref{S_j_CL}
and using the Rankin-Selberg estimate in \eqref{GL3 RS},one has that
$\mathbf{S}_j\left( C,L,N_j,\pm,\pm\right)$ is bounded by
\bna
 \frac{N L^{1/2}}{r^{1/2}Q M_1^{3/2} M^{1/2} }
 \bigg( \mathop{\sum\sum}_{n_1^2n_2\sim L} \frac{1}{n_2}
 \bigg| \sum_{ q\sim C, (q,M)=1 \atop n_{1}|qr } q^{-3/2}
 \sum_{m\sim N_j}\frac{\lambda_g(m)}{m^{1/2}}\,
 \mathfrak{C}_j(n_{1},\pm n_{2}, \pm m, q)\,
 \mathfrak{R}^{\pm,\pm}\left( Y_j, \frac{n_{1}^2 n_{2}}{ q^3 M_1^3r},q\right)\bigg|^2\bigg)^{1/2}.
\ena
As in Munshi \cite{Munshi2018}, we write $q=q_1q_2$ with $q_1|(n_1 r)^{\infty} $, $(q_2, n_1 r)=1 $ and apply Cauchy-Schwarz again to get that the expression inside the absolute value being
\bna
&&\sum_{ n_{1}|q_1r \atop q_1|(n_1r)^{\infty}}q_1^{-3/2}
\sum_{ q_2\sim C/q_1 \atop (q_2, q_1 M)=1}q_2^{-3/2}
 \sum_{m\sim N_j}\frac{\lambda_g(m)}{m^{1/2}}\,
 \mathfrak{C}_j(n_{1},\pm n_{2}, \pm m, q_1q_2)\,
 \mathfrak{R}^{\pm,\pm}\left( Y_j, \frac{n_{1}^2 n_{2}}{ q_1^3 q_2^3 M_1^3r},q_1q_2\right)\\
 & \ll&
 N^\varepsilon \bigg( \sum_{ n_{1}|q_1r \atop q_1|(n_1r)^{\infty}}q_1^{-3}
 \bigg|\sum_{ q_2\sim C/q_1 \atop (q_2, q_1 M)=1}q_2^{-3/2}
  \sum_{m\sim N_j}\frac{\lambda_g(m)}{m^{1/2}}\,
 \mathfrak{C}_j(n_{1},\pm n_{2}, \pm m, q_1q_2)\,
 \mathfrak{R}^{\pm,\pm}\left( Y_j, \frac{n_{1}^2 n_{2}}{ q_1^3 q_2^3 M_1^3r},q_1q_2\right)
 \bigg|^2 \bigg)^{1/2}.
\ena
Hence we have
\bea\label{Sj upper bound}
\mathbf{S}_j\left( C,L,N_j,\pm,\pm\right) \ll \frac{N L^{1/2}}{r^{1/2}Q M_1^{3/2} M^{1/2} }
\bigg\{ \sum_{n_1\leq Cr}\sum_{n_1|q_1r \atop q_1|(n_1r)^{\infty}}q_1^{-3}
\times \mathbf{\Omega}_j^{\pm}( n_1,q_1,r)\bigg\}^{1/2},
\eea
where
\bea\label{Omega_j}
\mathbf{\Omega}_j^{\pm}( n_1,q_1,r)&=&
\sum_{n_2}\frac{1}{n_2}\phi\left( \frac{n_2}{L/n_1^2}\right)
\bigg| \sum_{ q_2\sim C/q_1 \atop (q_2, q_1 M)=1}q_2^{-3/2}
  \sum_{m\sim N_j}\frac{\lambda_g(m)}{m^{1/2}}\nonumber\\&&\times
 \mathfrak{C}_j(n_{1},\pm n_{2}, \pm m, q_1q_2)\,
 \mathfrak{R}^{\pm,\pm}\left( Y_j, \frac{n_{1}^2 n_{2}}{ q_1^3 q_2^3 M_1^3r},q_1q_2\right) \bigg|^2.
\eea
Here $\phi$ is a nonnegative smooth function on $\mathbb{R}^+$, supported on $[2/3,3]$, and such that $ \phi(x)=1$ for $x\in[1,2]$.

\subsubsection{Estimation of $ \mathbf{\Omega}_1^{\pm}( n_1,q_1,r)$ }
We first estimate the contribution from $ \mathbf{\Omega}_1^{\pm}( n_1,q_1,r)$.
Note that
\bna
n_2\mapsto \mathfrak{C}_1(n_{1},\pm n_{2}, \pm m, q_1q_2)\,
\overline{\mathfrak{C}_1(n_{1},\pm n_{2}, \pm m', q_1q_2')}
\ena
is periodic of period $M_1q_2q_2'q_1r/n_1$. Opening the absolute square in \eqref{Omega_j},
we break the $n_2$-sum into congruence classes modulo $M_1q_2q_2'q_1r/n_1 $ and then
apply the Poisson summation formula to the sum over $n_2$. It is therefore sufficient
to consider the following sum:
\bea\label{Omega_1}
\widetilde{\mathbf{\Omega}}_1^{\pm}( n_1,q_1,r)&=&
 \sum_{ q_2\sim C/q_1 \atop (q_2, q_1 M)=1}q_2^{-3/2}
 \sum_{ q_2'\sim C/q_1 \atop (q_2', q_1 M)=1}q_2'^{-3/2}
  \sum_{m\sim N_1}\frac{1}{m^{1/2}}\,
  \sum_{m'\sim N_1}\frac{|\lambda_g(m')|^2}{m'^{1/2}}\nonumber\\
&\times&
\sum_{\widetilde{n}_2\in\mathbf{Z}}\left|\mathcal{C}(\widetilde{n}_2)\right|\,
\bigg| \mathcal{H}^{\pm,\pm}\left( \frac{\widetilde{n}_2 L}{M_1q_2q_2'q_1n_1r}\right)\bigg|,
\eea
where the character sum
$ \mathcal{C}(\widetilde{n}_2)= \mathcal{C}(\widetilde{n}_2,m,m',q_2,q_2',n_1,q_1,r)$ is given by
\bea\label{C}
\mathcal{C}(\widetilde{n}_2)=\frac{n_1}{M_1q_2q_2'q_1r}\sum_{ v \bmod M_1q_2q_2'q_1r/n_1}
\mathfrak{C}_1(n_{1},\pm v, \pm m, q_1q_2)\,
\overline{\mathfrak{C}_1(n_{1},\pm v, \pm m^{'}, q_1q_2')}
e\left(\frac{\widetilde{n}_2v}{M_1q_2q_2'q_1r/n_1}\right),
\eea
and the integral $\mathcal{H}^{\pm,\pm}(X) = \mathcal{H}^{\pm,\pm}(X;m,m',q_2,q_2',q_1,r)$ is given by
\bea\label{H}
\mathcal{H}^{\pm,\pm}(X)=\int_{\mathbb{R}}\phi(\xi )\,
\mathfrak{R}^{\pm,\pm}\left( \frac{mN}{q_1^2q_2^2 M^2}, \frac{L\xi }{ q_1^3 q_2^3 M_1^3r},q_1q_2\right)
\overline{\mathfrak{R}^{\pm,\pm}\left( \frac{m'N}{q_1^2q_2'^2 M^2}, \frac{L\xi}{ q_1^3 q_2'^3 M_1^3r},q_1q_2'\right)}
e( -X\xi )\frac{\mathrm{d}\xi}{\xi}.
\eea
Here we have used the estimate $|\lambda_g(m)\lambda_g(m')|\leq  |\lambda_g(m)|^2+|\lambda_g(m')|^2$ and
we recall that $ Y_1=mN/(q^2 M^2)$.

\subsubsection{Estimation of the integral $\mathcal{H}^{\pm,\pm}(X)$}

\begin{lemma}\label{integral estimate}
The integral $\mathcal{H}^{\pm,\pm}(X)$ is negligibly small unless
$|X|\ll M^{\varepsilon}Q/C$, in which case
\bna
\mathcal{H}^{\pm,\pm}(X)\ll M^{\varepsilon}  C/Q.
\ena
\end{lemma}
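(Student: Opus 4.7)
The plan is to expose the oscillatory structure of $\mathfrak{R}^{\pm,\pm}(y_1,y_2,q)$ via stationary phase in its defining integrals, and then analyze the $\xi$-integral in \eqref{H}. First, I would treat $\mathfrak{I}^\pm(y_1,q,\zeta)$ by stationary phase in $y$. In the relevant regime $y_1 = mN/(q^2M^2) \asymp (Q/C)^2 \gg M^\varepsilon$ (obtained from $m\asymp N_1\asymp MM_2$, $q\asymp C$, and $N=Q^2M_1$), the phase $\zeta Ny/(qQM_1)\pm 2\sqrt{y_1 y}$ has stationary point $y_0(\zeta)=y_1(qQM_1/(\zeta N))^2$, lying in the support of $V$ only when $|\zeta|\asymp qQM_1\sqrt{y_1}/N\asymp 1$. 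Stationary phase replaces $\mathfrak{I}^\pm$ by $A_I(\zeta)\,e(\Phi_I(\zeta))$, modulo a negligible error, with $\Phi_I(\zeta)=\mp y_1 qQM_1/(\zeta N)$ and $\Phi_I'(\zeta)\asymp Q/C$ on the effective support.

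Next I would open $\mathfrak{J}^\pm(y_2,q,\zeta)$ via the Mellin representation $W^\dagger(u,1/2-i\tau)=\int_0^\infty W(z)\,z^{-1/2-i\tau}\,e(-uz)\,dz$ with $u=\zeta N/(qQM_1)$, and then apply stationary phase in $\tau$. Stirling on the three $\Gamma$-factors of $\gamma_\pm(-1/2+i\tau)$ localizes $\tau$ to $\tau\asymp uy\asymp Q/C$ (consistent with the effective support of $W^\dagger$), and after resummation produces explicit oscillatory phases in $y_2$ and $\zeta$. The net outcome is
\[
\mathfrak{R}^{\pm,\pm}(y_1,y_2,q)=\int U\!\left(\tfrac{\zeta}{M^\varepsilon}\right) A(\zeta;y_1,y_2,q)\,e\bigl(\Psi(\zeta;y_1,y_2,q)\bigr)\,d\zeta + O(M^{-A}),
\]
where $|A|\ll 1$, the effective $\zeta$-support is $|\zeta|\asymp 1$, and $\partial_\zeta\Psi\asymp Q/C$, $\partial_{y_2}\Psi\asymp (Ny_2)^{1/3}/y_2$. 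A single integration by parts in $\zeta$ then yields the pointwise estimate $|\mathfrak{R}^{\pm,\pm}|\ll M^\varepsilon C/Q$.

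Substituting this into \eqref{H}, the $\xi$-phase of $\mathcal{H}^{\pm,\pm}(X)$ equals $-X\xi+\Psi_1-\Psi_2$, and using $\partial_\xi y_2\asymp L/(q_1^3q_2^3M_1^3 r)$ together with $L\ll N^{2+\varepsilon}r/Q^3$, $N=Q^2M_1$ and $q_2\asymp C/q_1$, one obtains
\[
\partial_\xi\Psi_j=(\partial_{y_2}\Psi)(\partial_\xi y_2)\asymp \frac{(Ny_2)^{1/3}}{y_2}\cdot\frac{L}{q_1^3 q_2^3 M_1^3 r}\asymp \frac{Q}{C}.
\]
Hence the $\xi$-derivative of the total phase is $-X+O(M^\varepsilon Q/C)$. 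For $|X|\gg M^\varepsilon Q/C$ repeated integration by parts in $\xi$ yields the negligible bound. For $|X|\ll M^\varepsilon Q/C$, the trivial $\xi$-integral is $O(1)$, so combining with the pointwise bound $|\mathfrak{R}_j|\ll M^\varepsilon C/Q$ delivers $\mathcal{H}^{\pm,\pm}(X)\ll (M^\varepsilon C/Q)^2\ll M^\varepsilon C/Q$ (using $C\leq Q$). The main technical hurdle is the $\tau$-stationary-phase evaluation of $\mathfrak{J}^\pm$: controlling the product of three $\Gamma$-factors of large argument and verifying that the effective $\tau$-support of $W^\dagger$ is $|\tau|\ll M^\varepsilon Q/C$ requires careful Stirling bookkeeping, but is standard in GL(3)-Voronoi-based subconvexity arguments.
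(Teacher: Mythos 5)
Your plan takes a genuinely different route from the paper: you propose sequential one-dimensional stationary phase in $v_1$ (for $\mathfrak{I}^\pm$) and in $\tau$ (for $\mathfrak{J}^\pm$), followed by integration by parts in $\zeta$, whereas the paper keeps the inner integrals coupled, exploits the fact that the $\zeta$-dependence of the full phase is \emph{linear} (namely $\zeta N(v_1-v_2)/(qQM_1)$, so that a $\zeta$-IBP only forces $|v_1-v_2|\ll M^\varepsilon C/Q$), and then applies Huxley's two-dimensional second derivative test in $(v_1,\tau)$ together with the bound $W^\dagger\ll(1+|\tau|)^{-1/2}$. Both strategies can in principle reach the target, but your sketch has two concrete gaps.

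First, the claimed pointwise bound $|\mathfrak{R}^{\pm,\pm}|\ll M^\varepsilon C/Q$ from ``a single integration by parts in $\zeta$'' is not justified. After the two stationary-phase replacements, the combined $\zeta$-phase $\Psi(\zeta)=\Phi_I(\zeta)-\Phi_J(\zeta)$ is a difference of two quantities each of size $\asymp Q/C$ with $\zeta$-derivative $\asymp Q/C$, and nothing in your argument rules out a zero of $\Psi'$ inside the effective support $|\zeta|\asymp 1$. If such a stationary point exists, the van der Corput second derivative estimate gives only $|\mathfrak{R}^{\pm,\pm}|\ll M^\varepsilon(C/Q)^{1/2}$, which is precisely what the paper obtains for $\ell=0$. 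That weaker bound still suffices for the lemma, since $\mathcal{H}\ll|\mathfrak{R}_1||\mathfrak{R}_2|\cdot O(M^\varepsilon)\ll M^\varepsilon C/Q$, but your stronger claim (which would give $\mathcal{H}\ll(C/Q)^2$) is an overreach that would require an explicit monotonicity argument for $\Psi$ that you have not supplied.

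Second, you restrict throughout to the regime $Y_1=mN/(q^2M^2)\gg M^\varepsilon$, justified by taking $m\asymp N_1\asymp MM_2$ and $q\asymp C\asymp Q$. But $N_1$ and $C$ run over dyadic blocks down to $O(1)$, so the opposite regime $Y_j\ll M^\varepsilon$ genuinely occurs; there the Bessel factor $\mathbf{J}_g^\pm(4\pi\sqrt{Y_jv_1})$ is non-oscillatory and one needs the small-argument bound $\mathbf{J}_g^\pm(4\pi\sqrt{Y_jv_1})\ll Y_j^{-1/4}$ together with the $W^\dagger$ decay and the $\zeta$-restriction $|v_1-v_2|\ll M^\varepsilon C/Q$ to recover $\mathfrak{R}^{\pm,\pm}\ll M^\varepsilon(C/Q)^{1/2}$, as the paper does explicitly. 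Finally, for the $\xi$-truncation to $|X|\ll M^\varepsilon Q/C$ it is cleaner to bound $\xi^\ell\partial_\xi^\ell\mathfrak{R}^{\pm,\pm}$ directly (as the paper does, by passing the $\xi$-derivatives onto the factor $(NL\xi/q^3M_1^3r)^{-i\tau}$, producing benign $(-i\tau)^\ell$ factors controlled by $\Xi^\ell\asymp(Q/C)^\ell$), since this automatically packages the amplitude and phase contributions for the IBP in $\xi$ without separate bookkeeping.
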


\begin{proof}
By \eqref{R integral},
\bna
\xi^\ell \frac{\partial^\ell }{\partial \xi^\ell}\mathfrak{R}^{\pm,\pm}\left(Y_j,\frac{L\xi}{q^3M_1^3r},q\right)=
\int_{\mathbb{R}}U\left(\frac{\zeta}{M^\varepsilon}\right)
  \omega(q,\zeta)\mathfrak{I}^\pm\left(Y_j,q,\zeta\right)
  \xi^\ell \frac{\partial^\ell }{\partial \xi^\ell}\mathfrak{J}^{\pm}\left(\frac{L\xi}{q^3M_1^3r},q,\zeta\right)
  \mathrm{d}\zeta,
\ena
where by \eqref{J-integral definition},
\bna
\xi^\ell \frac{\partial^\ell }{\partial \xi^\ell}\mathfrak{J}^{\pm}\left(\frac{L\xi}{q^3M_1^3r},q,\zeta\right)
&=&
\frac{1}{2\pi}
\int_{\mathbb{R}}(-i\tau)(-i\tau-1)\cdots (-i\tau-\ell+1) \\&&
\left(\frac{NL\xi}{q^3M_1^3r} \right)^{-i\tau}\gamma_{\pm}\left(-\frac{1}{2}+i\tau\right)
W^{\dag}\left(\frac{\zeta N}{qQM_1},\frac{1}{2}-i\tau\right)\mathrm{d}\tau.
\ena
By definition,
\bna
W^{\dag}\left(\frac{\zeta N}{qQM_1},\frac{1}{2}-i\tau\right)=
\int_0^{\infty}W(v_2)v_2^{-1/2}e\left( -\frac{\tau}{2\pi}\log v_2-\frac{\zeta N v_2}{qQM_1} \right)
\mathrm{d}v_2.
\ena
By repeated integration by parts, one sees that the above integral
is negligibly small unless $ |\tau|\asymp |\zeta|N/(qQM_1):=\Xi$.
Moreover, by the second derivative test for exponential integrals,
\bea\label{111}
W^{\dag}\left(\frac{\zeta N}{qQM_1},\frac{1}{2}-i\tau\right)\ll (1+|\tau|)^{-1/2}.
\eea
Thus
\bna
\xi^\ell \frac{\partial^\ell }{\partial \xi^\ell}\mathfrak{J}^{\pm}\left(\frac{L\xi}{q^3M_1^3r},q,\zeta\right)&=&
\frac{1}{2\pi}
\int_{\mathbb{R}}\varpi\Big(\frac{|\tau|}{\Xi}\Big)
(-i\tau)(-i\tau-1)\cdots (-i\tau-\ell+1) \\&&
\left(\frac{NL\xi}{q^3M_1^3r} \right)^{-i\tau}\gamma_{\pm}\left(-\frac{1}{2}+i\tau\right)
W^{\dag}\left(\frac{\zeta N}{qQM_1},\frac{1}{2}-i\tau\right)\mathrm{d}\tau+O(M^{-A}),
\ena
where $\varpi(x)\in \mathcal{C}_c^{\infty}(0,\infty)$
satisfying $\varpi^{(j)}(x)\ll_j 1$ for any integer $j\geq 0$.
Hence
\bea\label{3}
\xi^\ell \frac{\partial^\ell }{\partial \xi^\ell}\mathfrak{R}^{\pm,\pm}\left(Y_j,\frac{L\xi}{q^3M_1^3r},q\right)&=&\frac{1}{2\pi}
\int_{\mathbb{R}}U\left(\frac{\zeta}{M^\varepsilon}\right)
  \omega(q,\zeta)\int_{\mathbb{R}}\varpi\Big(\frac{|\tau|}{\Xi}\Big)
  (-i\tau)(-i\tau-1)\cdots (-i\tau-\ell+1)\nonumber\\&&
  \left(
\int_0^{\infty}W(v_2)v_2^{-1/2}e\left( -\frac{\tau}{2\pi}\log v_2-\frac{\zeta N v_2}{qQM_1} \right)
\mathrm{d}v_2\right)\nonumber
\\&&    \left(\frac{NL\xi}{q^3M_1^3r} \right)^{-i\tau}
\gamma_{\pm}\left(-\frac{1}{2}+i\tau\right)
\mathfrak{I}^\pm\left(Y_j,q,\zeta\right)
\mathrm{d}\tau
  \mathrm{d}\zeta+O(M^{-A}),
\eea
where by Lemma \ref{GL2 lemma} for $Y_j\gg M^{\varepsilon}$,
\bea\label{1}
\mathfrak{I}^\pm\left(Y_j,q,\zeta\right)=Y_j^{1/4} \int_0^\infty
v_1^{-1/4}V(v_1)e\left(\frac{\zeta N v_1}{qQM_1 }\pm 2 \sqrt{Y_jv_1}\right)\mathrm{d}v_1,
\eea
and for $Y_j\ll M^{\varepsilon}$,
\bea\label{2}
\mathfrak{I}^\pm\left(Y_j,q,\zeta\right)=Y_j^{1/2}\int_0^{\infty}V(v_1)e\left(\frac{\zeta N v_1}{qQM_1 }\right)
\mathbf{J}_g^{\pm}(4\pi\sqrt{Y_j v_1})\mathrm{d}v_1.
\eea

Consider the $\zeta$-integral
\bna
\int_{\mathbb{R}}U\left(\frac{\zeta}{M^\varepsilon}\right)
  \omega(q,\zeta)e\left(\frac{\zeta N (v_1-v_2)}{qQM_1} \right)
    \mathrm{d}\zeta.
\ena
Repeated integration by parts shows that the above integral
is negligibly small unless $ |v_1-v_2|\ll qQM_1^{1+\varepsilon}/N\ll CM^{\varepsilon}/Q$ for $q\sim C$
and $Q=\sqrt{N/M_1}$, where we have used \eqref{rapid decay omega}. Moreover,
by Stirling's formula (see \cite{Munshi2}),
\bna
\gamma_{\pm}\left(-\frac{1}{2}+i\tau\right)=\left(\frac{|\tau|}{e\pi}\right)^{3i\tau}\Upsilon_{\pm}(\tau),
\qquad \Upsilon_{\pm}^{(j)}(\tau)\ll |\tau|^{-j}
\ena
for any integer $j\geq 0$.
Thus for $Y_j\gg M^{\varepsilon}$,
we write
\bna
\xi^\ell \frac{\partial^\ell }{\partial \xi^\ell}
\mathfrak{R}^{\pm,\pm}\left(Y_j,\frac{L\xi}{q^3M_1^3r},q\right)&=&\frac{Y_j^{1/4}}{2\pi}
\int_{\mathbb{R}}U\left(\frac{\zeta}{M^\varepsilon}\right)
  \omega(q,\zeta)
\int_{|v|\ll CM^{\varepsilon}/Q}e\left(\frac{\zeta N v}{qQM_1} \right)\\&&
 \times \int_{\mathbb{R}}\int_0^\infty V_0(v_1,\tau;v)
e\left( f(v_1,\tau;v)\right)
\mathrm{d}v_1\mathrm{d}\tau\mathrm{d}v
  \mathrm{d}\zeta.
\ena
where
\bna
V_0(v_1,\tau;v)=v_1^{-1/4}(v_1+v)^{-1/2}V(v_1)W(v_1+v)
(-i\tau)(-i\tau-1)\cdots (-i\tau-\ell+1) \Upsilon_{\pm}(\tau)
\varpi\Big(\frac{|\tau|}{\Xi}\Big)
\ena
and
\bna
f(v_1,\tau;v)=-\frac{\tau}{2\pi}\log\frac{NL\xi}{q^3M_1^3r}-\frac{3\tau}{2\pi}\log\frac{|\tau|}{e\pi}
-\frac{\tau}{2\pi}\log (v_1+v)\pm 2 \sqrt{Y_jv_1}.
\ena
Note that
\bna
\frac{\partial f(v_1,\tau;v)}{\partial v_1}
     &=&-\frac{\tau}{2\pi (v_1+v)}\pm Y_j^{1/2}v_1^{-1/2},\\
\frac{\partial f(v_1,\tau;v)}{\partial \tau}
     &=&-\frac{1}{2\pi}\log\frac{NL\xi}{q^3M_1^3r}-\frac{3}{2\pi}\log\frac{|\tau|}{\pi}
     -\frac{1}{2\pi}\log (v_1+v).
\ena
Thus we have
\bna
\frac{\partial^2 f(v_1,\tau;v)}{\partial v_1^2}
     &=&\frac{\tau}{2\pi (v_1+v)^2}\mp \frac{1}{2}Y_j^{1/2}v_1^{-3/2}\gg |\tau|,\\
\frac{\partial^2 f(v_1,\tau;v)}{\partial \tau^2}
     &=&-\frac{3}{2\pi \tau}\gg |\tau|^{-1},\\
\frac{\partial^2 f(v_1,\tau;v)}{\partial v_1\partial \tau}&=& -\frac{1}{2\pi (v_1+v)}.
\ena
This implies that
\bna
\left|\mathrm{det}f''\right|=\left|\frac{\partial^2 f}{\partial v_1^2}
\frac{\partial^2 f}{\partial \tau^2}
   -\left(\frac{\partial^2 f}{\partial v_1\partial \tau}\right)^2\right|
   \gg 1
\ena

By applying the two dimensional second derivative test in
\cite[Lemma 5.1.3]{Hux2} (see also \cite[Lemma 4]{Munshi2}) with
$\rho_1=|\tau|, \rho_2=|\tau|^{-1}$,
we have
\bna
\int_{\mathbb{R}}\int_0^\infty V_0(v_1,\tau;v)
e\left( f(v_1,\tau;v)\right)
\mathrm{d}v_1\mathrm{d}\tau\mathrm{d}v
  \mathrm{d}\zeta\ll \Xi^{\ell}\ll  \left(\frac{|\zeta|N}{qQM_1}\right)^{\ell}
  \ll \left(\frac{Q}{q}\right)^{\ell}
\ena
recalling $ Q= \sqrt{N/M_1}$. It follows that (note that in this case $\Xi\asymp \sqrt{Y_j}$ by \eqref{1})
\bna
\xi^\ell \frac{\partial^\ell }{\partial \xi^\ell}
\mathfrak{R}^{\pm,\pm}\left(Y_j,\frac{L\xi}{q^3M_1^3r},q\right)\ll
M^\varepsilon \left(\frac{Q}{C}\right)^{\ell-1/2}  \qquad \mbox{for\,}\;  q\sim C.
\ena
This holds also for $Y_j\ll M^{\varepsilon}$. In fact, by \eqref{3} and \eqref{2} we have
\bna
\xi^\ell \frac{\partial^\ell }{\partial \xi^\ell}\mathfrak{R}^{\pm,\pm}
\left(Y_j,\frac{L\xi}{q^3M_1^3r},q\right)&=&\frac{Y_j^{1/2}}{2\pi}
\int_0^{\infty}V(v_1)\mathbf{J}_g^{\pm}(4\pi\sqrt{Y_j v_1})
\nonumber\\&&\int_{\mathbb{R}}\varpi\Big(\frac{|\tau|}{\Xi}\Big)
  (-i\tau)(-i\tau-1)\cdots (-i\tau-\ell+1)
\left(\frac{NL\xi}{q^3M_1^3r} \right)^{-i\tau}
\gamma_{\pm}\left(-\frac{1}{2}+i\tau\right)\nonumber\\&&
\int_{\mathbb{R}}U\left(\frac{\zeta}{M^\varepsilon}\right)
  \omega(q,\zeta)e\left(\frac{\zeta N v_1}{qQM_1 }\right)W^{\dag}\left(\frac{\zeta N}{qQM_1},\frac{1}{2}-i\tau\right)
\mathrm{d}v_1
\mathrm{d}\tau
  \mathrm{d}\zeta+O(M^{-A}).
\ena
As before, by considering the $\zeta$-integral and integrating by parts, one sees that
 the above integral
is negligibly small unless $ |v_1-v_2|\ll qQM_1^{1+\varepsilon}/N\ll CM^{\varepsilon}/Q$.
Furthermore, by \eqref{Bessel-small} we have
$\mathbf{J}_g^{\pm}(4\pi\sqrt{Y_j v_1})\ll Y_j^{-1/4}$. Combining these estimates with
\eqref{111}, we obtain
\bna
\xi^\ell \frac{\partial^\ell }{\partial \xi^\ell}\mathfrak{R}^{\pm,\pm}
\left(Y_j,\frac{L\xi}{q^3M_1^3r},q\right)\ll Y_j^{1/4}\frac{CM^{\varepsilon}}{Q}
\Xi^{\ell+1/2}\ll M^{\varepsilon}\left(\frac{Q}{C}\right)^{\ell-1/2}.
\ena

Hence by applying integration by parts repeatedly on the $\xi$-integral in \eqref{H} and evaluating the
resulting $\xi$-integral trivially, we find that
\bna
\mathcal{H}^{\pm,\pm}(X)\ll_\ell M^\varepsilon\; \frac{C}{Q}\;\left(\frac{XC}{Q}\right)^{-\ell}
\ena
for any integer $ \ell \geq 0$. Therefore,
$ \mathcal{H}^{\pm,\pm}(X)$ is negligibly small unless $ X \leq N^\varepsilon Q/C$.
Moreover, by taking $ \ell=0$, one has
\bna
\mathcal{H}^{\pm,\pm}(X)\ll_\ell M^\varepsilon C/Q.
\ena

\end{proof}

By Lemma \ref{integral estimate}, the sum over $ \widetilde{n}_2$ in \eqref{Omega_1} can be restricted to, up to a negligibly error,
\bea\label{widetilde{n}_2}
\widetilde{n}_2 \ll Q^{1+\varepsilon}M_1Cn_1r/(q_1L) := L^*.
\eea

\subsubsection{Estimation of the character sum $\mathcal{C}(\widetilde{n}_2)$}
In this section we estimate the character sums in \eqref{C}.
By \eqref{C1}, we have
\bea\label{character sums decom}
\mathcal{C}(\widetilde{n}_2)&=&
\frac{n_1}{M_1q_2q_2'q_1r}
\sum_{ v_1 \bmod M_1}\;
\sum_{ v_2 \bmod q_2q_2'q_1r/n_1}
\chi_1(q_1q_2) \chi_2(\pm q_1^2q_2^2 M_1 \overline{m})\tau( \chi_2) M_1
\mathfrak{B}(n_1,\pm v_2,\pm m;q_1q_2)\nonumber\\
&\times&
\mathfrak{D}(n_1,\pm v_1,\pm m,q_1q_2;M_1)
\overline{\chi}_1(q_1q_2')\overline{\chi}_2( \pm q_1^2q_2^2M_1\overline{m'})
\overline{\tau(\chi_2)}M_1
\overline{\mathfrak{B}(n_1,\pm v_2,\pm m';q_1q_2')}\nonumber\\
&\times&
\overline{\mathfrak{D}(n_1,\pm v_1,\pm m',q_1q_2';M_1)}
e\left(\frac{\overline{q_2q_2'q_1r/n_1}v_1\widetilde{n}_2}{M_1}\right)
e\left(\frac{\overline{M_1}v_2\widetilde{n}_2}{q_2q_2'q_1r/n_1}\right)\nonumber\\
&\leq& M_1^2M_2|\mathcal{C}_1(\widetilde{n}_2)\mathcal{C}_2(\widetilde{n}_2)|,
\eea
where
\bea\label{C11}
\mathcal{C}_1(\widetilde{n}_2)=
\frac{1}{M_1}\sum_{ v \bmod M_1}
\mathfrak{D}(n_1,\pm v,\pm m,q_1q_2;M_1)
\overline{\mathfrak{D}(n_1,\pm v,\pm m',q_1q_2';M_1)}
e\left(\frac{\overline{q_2q_2'q_1r/n_1}\widetilde{n}_2 v}{M_1}\right)
\eea
and
\bea\label{C22}
\mathcal{C}_2(\widetilde{n}_2)=
\frac{n_1}{q_2q_2'q_1r}
\sum_{ v \bmod q_2q_2'q_1r/n_1}
\mathfrak{B}(n_1,\pm v,\pm m;q_1q_2)
\overline{\mathfrak{B}(n_1,\pm v,\pm m';q_1q_2')}
e\left(\frac{\overline{M_1}\widetilde{n}_2 v }{q_2q_2'q_1r/n_1}\right).
\eea

We first evaluate $\mathcal{C}_1(\widetilde{n}_2)$ in \eqref{C11}.
Following closely Lin, Michel and Sawin \cite{LMS}, we transform $\mathcal{C}_1(\widetilde{n}_2)$ to made it
to a sheaf-theoretic treatment.
Define $ \mathbf{K}(v):= \mathrm{K}l_2( \gamma v;M_1)$,
$\mathbf{L}(v)= \mathrm{L}_{\alpha, \beta }(v; M_1)$ and their normalized multiplicative convolution
\bna
\mathbf{K} \star \mathbf{L}(v)= \frac{1}{M_1^{1/2}}
\sum_{u\in \mathbf{F}_{M_1}^\times }
\mathbf{K}(u) \mathbf{L}(v/u)
=
\frac{1}{M_1^{1/2}}
\sum_{u \in \mathbf{F}_{M_1}^\times }
\mathbf{K}(vu) \mathbf{L}(1/u).
\ena
We also define likewise $\mathbf{K}'(v) ,\mathbf{L}'(v)$ with parameters $\alpha', \beta', \gamma'$.
In particular, in our case we shall choose the parameters as the following:
\bea\label{parameters}
&&\alpha=\pm m\overline{q_1q_2}\overline{M_2},\quad \beta=M_2,\quad
\gamma=\mp r\overline{( q_1q_2r/n_1)^2}, \quad
\eta=\widetilde{n}_2\overline{q_2q_2'q_1r/n_1},\nonumber\\
&&\alpha'=\pm m'\overline{q_1q_2'}\overline{M_2}, \quad \beta'=M_2,\quad
\gamma'=\mp r\overline{(q_1q_2'r/n_1)^2}.
\eea
Then by the definition of $\mathfrak{D}(n_1,n_2,m,q;M_1)$ in \eqref{D-def}, one has
\bea\label{transform}
\mathcal{C}_1(\widetilde{n}_2)&=&
\frac{1}{M_1}\sum_{ v \bmod M_1}\;
\sideset{}{^*} \sum_{a_1 \bmod M_1}
\mathrm{L}_{\pm m\overline{q_1q_2M_2}, M_2}(a_1; M_1)
\mathrm{K}l_2(\mp r v\overline{a_1 (q_1q_2r/n_1)^2}; M_1)\nonumber\\
&\times&
\sideset{}{^*} \sum_{a_2 \bmod M_1}\;
\overline{\mathrm{L}_{\pm m'\overline{q_1q_2'M_2}, M_2}(a_2; M_1)}
\mathrm{K}l_2(\mp r v\overline{a_2 (q_1q_2'r/n_1)^2}; M_1)
e\left(\frac{\overline{q_2q_2'q_1r/n_1}\widetilde{n}_2v}{M_1}\right)\nonumber\\
&=& \sum_{v\bmod M_1}\;\mathbf{K} \star \mathbf{L}(v)\,
\overline{\mathbf{K'} \star \mathbf{L'}(v)}e\left(\frac{\eta v}{M_1}\right).
\eea
Define the Fourier transform of the function
$K: (\mathbf{Z}/M_1\mathbf{Z})^{\times}\rightarrow \mathbb{C}$ by
\bna
\widehat{K}(v)=\frac{1}{M_1^{1/2}}\sum_{a\in \mathbf{F}_{M_1}}K(a)e\left(\frac{av}{M_1}\right).
\ena
Then by Plancherel formula we have (see \cite[Section 4]{LMS})
\bea\label{fourier}
\sum_{v \bmod M_1} \mathbf{K} \star\mathbf{L} (v)
\overline{\mathbf{K'} \star \mathbf{L'} (v)}
e\left(\frac{\eta v}{M_1}\right)
=\sum_{v}
\widehat{\mathbf{K} \star \mathbf{L}} (v)\,
\overline{\widehat{\mathbf{K'} \star \mathbf{L'}} (v-\eta)}
:=\sum_{v} \mathbf{Z}(v) \overline{\mathbf{Z'}(v-\eta)},
\eea
where
\bea\label{Z}
\mathbf{Z}(v)&=&\widehat{\mathbf{K} \star \mathbf{L}} (v)=
\frac{1}{M_1^{1/2}}\sum_{a\in \mathbf{F}_{M_1}}\mathbf{K} \star \mathbf{L}(a)
e\left(\frac{av}{M_1}\right)\nonumber\\
&=&
\frac{1}{M_1^{1/2}}
\sum_{a\in \mathbf{F}_{M_1}} \frac{1}{M_1^{1/2}}
\sum_{u\in \mathbf{F}_{M_1}^\times }
\mathbf{K}(u) \mathbf{L}(a/u)e\left(\frac{a v}{M_1}\right)\nonumber\\
&=&
\frac{1}{M_1^{1/2}}
\sum_{u\in \mathbf{F}_{M_1}^\times}\mathbf{K}(u)
\frac{1}{M_1^{1/2}}
\sum_{a\in \mathbf{F}_{M_1}} \mathbf{L}(a/u) e\left(\frac{av}{M_1}\right)\nonumber\\
&=&
\frac{1}{M_1^{1/2}}
\sum_{u\in \mathbf{F}_{M_1}^\times}\mathbf{K}(u)
\widehat{\mathbf{L}}(u v).
\eea
By the definition of $\mathbf{L}(v):=\mathrm{L}_{\alpha, \beta}(v; M_1) $ in \eqref{L} and the Fourier expansion of $\chi_1$ in terms of additive characters (see (3.12) in \cite{IK})), we have
\bna
\mathbf{L}(v)
&=& \frac{1}{M_1^{1/2}} \sum_{b \bmod M_1 \atop (b+\beta v , M_1)=1} \frac{1}{\tau(\chi_1)}
\sum_{c \bmod M_1} \chi_1(c) e\left( \frac{bc}{M_1}\right)
 e\left(\frac{\alpha \overline{b+\beta v}}{M_1}\right)\\
 &=&
 \frac{1}{\tau(\chi_1) M_1^{1/2}}\sum_{c \bmod M_1} \chi_1(c)
 \sum_{b \bmod M\atop (b+\beta v , M_1)=1}
 e\left(\frac{\alpha \overline{b+\beta v}+c\left(b+\beta a\right)}{M_1}\right)
 e\left( - \frac{\beta cv}{M_1}\right)\\
 &=&
  \frac{1}{\tau(\chi_1)}\sum_{c \bmod M_1} \chi_1(c)
  \mathrm{K}l_2( \alpha c ;M_1)e\left( - \frac{\beta cv}{M_1}\right),
\ena
where $\mathrm{K}l_2( n;M_1)$ is defined in \eqref{Kl_2}. It follows that
\bna
\widehat{\mathbf{L}}(v)=
\frac{M_1^{1/2}}{\tau(\chi_1)}
\chi_1(\beta^{-1}v)
 \mathrm{K}l_2(\beta^{-1}\alpha  v; M_1)
\ena
and by \eqref{Z},
\bna
\mathbf{Z}(v)=\frac{1}{\tau(\chi_1)}
\sum_{u\in \mathbf{F}_{M_1}^\times} \mathrm{K}l_2 (\beta\gamma u; M_1)
\chi_1(uv)\mathrm{K}l_2(\alpha u v; M_1).
\ena

We quote the following results of Lin, Michel and Sawin \cite[Proposition 4.5]{LMS}.
\begin{proposition}\label{prop}
Let $T_{\mathcal{F}}(\mathbf{F}_{M_1})$ be the subgroup of $\mathbf{F}_{M_1}^\times$ defined by
\bna
T_{\mathcal{F}}(\mathbf{F}_{M_1}^\times)
=\left\{\lambda\in \mathbf{F}_{M_1}^\times,
 [\times \lambda]^* \mathcal{F}\;
\text{{is geometrically isomorphic to }} \mathcal{F}\right\}
\ena
Assume that the sheaf $\mathcal{F}$ is good. For
any $\alpha, \beta, \alpha', \beta', \gamma, \gamma',\eta \in \mathbf{F}_{M_1}^\times $, we have
\bna
\sum_{v} \mathbf{Z}(v) \overline{\mathbf{Z}'(v-\eta)}= O(M_1^{1/2}).
\ena
If $\eta=0$ the above bound holds unless
\bna
\alpha/\alpha'=\beta\gamma/(\beta'\gamma') \in T_{\mathcal{F}}(\mathbf{F}_{M_1})
\ena
in which case
\bna
\sum_{v} \mathbf{Z}(v) \overline{\mathbf{Z}'(v)}=
c_{\mathcal{F}}(\alpha/\alpha') M_1+O(M_1^{1/2})
\ena
for $c_{\mathcal{F}}(\alpha/\alpha')$ some complex number of modulus 1. Here the implicit constants depend only on $C(\mathcal{F})$.
\end{proposition}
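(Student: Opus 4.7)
The plan is to interpret the sum sheaf-theoretically and invoke Deligne's Riemann Hypothesis over finite fields, in the style of Fouvry--Kowalski--Michel and Lin--Michel--Sawin. First I would realize the functions $\mathbf{Z}$ and $\mathbf{Z}'$ as Frobenius trace functions of explicit $\ell$-adic middle-extension sheaves on $\mathbb{A}^1_{\mathbf{F}_{M_1}}$. Reading
$$
\mathbf{Z}(v)=\frac{1}{\tau(\chi_1)}\sum_{u\in \mathbf{F}_{M_1}^\times} \mathrm{K}l_2(\beta\gamma u; M_1)\,\chi_1(uv)\,\mathrm{K}l_2(\alpha u v; M_1),
$$
the $u$-sum is a Mellin convolution of the Kloosterman sheaf (pulled back by $[\times \beta\gamma]$) with the Kummer-twisted dilate of a second Kloosterman sheaf (pulled back by $[\times \alpha v]$ and tensored with $\mathcal{L}_{\chi_1}$). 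So $\mathbf{Z}$ is, up to normalisation, the trace function of a sheaf $\mathcal{F}_{\alpha,\beta\gamma}$, and similarly for $\mathbf{Z}'$. The hypothesis that $\mathcal{F}$ be "good" encodes that each $\mathcal{F}_{\alpha,\beta\gamma}$ is a geometrically irreducible middle extension of weight $0$ whose total conductor is uniformly controlled by $C(\mathcal{F})$.

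Next I would apply the Grothendieck--Lefschetz trace formula to the correlation sum
$$
\sum_{v\in \mathbf{F}_{M_1}} t_{\mathcal{F}_{\alpha,\beta\gamma}}(v)\,\overline{t_{\mathcal{F}_{\alpha',\beta'\gamma'}}(v-\eta)}
=\mathrm{tr}\bigl(\mathrm{Frob}\mid H^*_c(\mathbb{A}^1_{\overline{\mathbf{F}_{M_1}}},\mathcal{H})\bigr),
$$
with $\mathcal{H}=\mathcal{F}_{\alpha,\beta\gamma}\otimes[+\eta]^*\mathcal{F}_{\alpha',\beta'\gamma'}^{\vee}$. Deligne's Weil II then yields the bound $O(M_1^{1/2})$ as soon as $\mathcal{H}$ carries no geometrically trivial constituent, with implicit constant controlled by $\dim H^*_c$, hence by $C(\mathcal{F})$. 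It remains to locate the exceptional configurations. A geometric isomorphism between $\mathcal{F}_{\alpha,\beta\gamma}$ and the shift $[+\eta]^*\mathcal{F}_{\alpha',\beta'\gamma'}$ is obstructed by comparing singular loci and local monodromies: a shift by $\eta\neq 0$ would move at least one singularity off its counterpart, so one is forced into $\eta=0$. When $\eta=0$, the comparison reduces to a dilation $[\times \lambda]$ with $\lambda=\alpha/\alpha'=\beta\gamma/(\beta'\gamma')$, and such a dilation preserves $\mathcal{F}$ geometrically iff $\lambda\in T_{\mathcal{F}}(\mathbf{F}_{M_1})$. In that case the one-dimensional geometrically constant constituent of $\mathcal{H}$ contributes a main term of modulus $M_1$, whose Frobenius eigenvalue is a unit complex number $c_{\mathcal{F}}(\lambda)$, while the remaining constituents still give $O(M_1^{1/2})$ by Weil II.

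The main obstacle is the sheaf-theoretic set-up: confirming geometric irreducibility of $\mathcal{F}_{\alpha,\beta\gamma}$, analysing its ramification at $0$, $\infty$, and the finite singularities created by the Mellin convolution, and extracting a conductor bound uniform in the parameters. These are precisely the delicate inputs packaged by the "good sheaf" hypothesis, and they also drive the rigidity statement that the geometric isomorphism class depends on $(\alpha,\beta,\gamma)$ only through the ratio $\alpha/(\beta\gamma)$ modulo the stabilizer $T_{\mathcal{F}}$. Granted those structural facts (whose verification is the hard technical core and follows the framework developed in \cite{LMS}), the rest is a fairly mechanical application of Deligne's theorem together with the Plancherel identity already used in \eqref{fourier}.
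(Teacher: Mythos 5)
The paper does not prove this statement at all: it is explicitly \emph{quoted} from Lin--Michel--Sawin \cite[Proposition 4.5]{LMS}, with no argument offered beyond the citation. So there is no in-paper proof to compare against; the only meaningful question is whether your sketch faithfully reconstructs the LMS argument, and whether it is self-contained enough to count as a proof.

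Your outline is the right blueprint for the LMS proof: interpret $\mathbf{Z}$ and $\mathbf{Z}'$ as trace functions of geometrically irreducible middle-extension $\ell$-adic sheaves on $\mathbb{A}^1_{\mathbf{F}_{M_1}}$ built from Kloosterman sheaves by Kummer twist, dilation, and Mellin convolution, apply Grothendieck--Lefschetz to the tensor product $\mathcal{H}=\mathcal{F}_{\alpha,\beta\gamma}\otimes[+\eta]^*\mathcal{F}_{\alpha',\beta'\gamma'}^{\vee}$, and invoke Deligne's Weil~II to get square-root cancellation except when $\mathcal{H}$ has a geometrically trivial subquotient. You also correctly identify that the main-term case corresponds to a geometric isomorphism $\mathcal{F}_{\alpha,\beta\gamma}\cong[+\eta]^*\mathcal{F}_{\alpha',\beta'\gamma'}$ and that this forces $\eta=0$ and pins down $\alpha/\alpha'=\beta\gamma/(\beta'\gamma')\in T_{\mathcal{F}}(\mathbf{F}_{M_1})$. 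However, as you yourself flag, all the substance is deferred: geometric irreducibility of the Mellin convolution sheaf, its weight-$0$ middle-extension property, uniform conductor bounds, the singularity/local-monodromy analysis showing that a nonzero additive shift kills the isomorphism, and the rigidity statement that the geometric class depends only on the ratio $\alpha/(\beta\gamma)$ up to $T_{\mathcal{F}}$. None of these is established in your sketch; they are precisely the content of LMS Sections~4.2--4.5 and Katz's theory invoked there. As a blind proposal your account is the correct strategy and would serve as an accurate summary of the cited result, but it is not an independent proof; it is a correct map of where the hard work lives. In that sense it matches the paper's posture (outsource this to LMS) rather than supplying the missing argument.

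Two small technical points worth sharpening if you were to carry this out: (i) the Plancherel step you gesture at in \eqref{fourier} is what converts the original multiplicative-convolution correlation into the additive-shift correlation to which the sum-product/Weil~II machinery applies, so it is a genuine input and not a mere bookkeeping remark; and (ii) the claim that ``a shift by $\eta\neq 0$ would move at least one singularity off its counterpart'' requires knowing the explicit singular locus of the Mellin-convolution sheaf (in LMS this is $\{0,\infty\}$ together with possible finite special points), and the argument in LMS is phrased in terms of comparing ramification at $0$ and $\infty$ rather than a generic ``singularity moves'' heuristic.
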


By \eqref{parameters}-\eqref{fourier} and Proposition \ref{prop}, we have
\bea\label{C2 estimate}
\mathcal{C}_1(\widetilde{n}_2)\ll M_1^{1/2}
\eea
unless
\bea\label{condition}
m q_2'/(m'q_2)=q_2'^2/q_2^2 \in T_{\mathcal{F}}(\mathbf{F}_{M_1})
\eea
in which case
\bea\label{condition result}
\mathcal{C}_1(\widetilde{n}_2)=
c_{\mathcal{F}}\left(m q_2'/(m'q_2)\right) M_1+O\left(M_1^{1/2}\right),\qquad
|c_{\mathcal{F}}\left(m q_2'/(m'q_2)\right)|=1.
\eea

\medskip

Next, we evaluate $\mathcal{C}_2(\widetilde{n}_2)$ in \eqref{C2}.
By \eqref{B-def} and \eqref{C2}, we have
\bna
\mathcal{C}_2(\widetilde{n}_2)&=&
\frac{n_1}{q_2q_2'q_1r}
\sum_{ v \bmod q_2q_2'q_1r/n_1}\;
\sum_{d|q_1q_2} d \mu\left(\frac{q_1q_2}{d}\right)
\sideset{}{^*}\sum_{u \bmod q_1q_2r/n_1 \atop n_1 u\equiv \pm \overline{M_2^2}m \bmod d}
e\left(\frac{\pm v \overline{M_1 u}}{q_1q_2r/n_1}\right)\;\nonumber
\\
&\times&
\sum_{d'|q_1q_2'} d' \mu\left(\frac{q_1q_2'}{d'}\right)
\sideset{}{^*}\sum_{u' \bmod q_1q_2'r/n_1 \atop n_1 u'\equiv \pm \overline{M_2^2}m' \bmod d'}
e\left(\frac{\mp v \overline{M_1u'}}{q_1q_2'r/n_1}\right)\;
e\left(\frac{\overline{M_1}\widetilde{n}_2 v}{q_2 q_2'q_1 r/n_1}\right)\nonumber
\\
&=&
\sum_{d|q_1q_2} d \mu\left(\frac{q_1q_2}{d}\right)
\sum_{d'|q_1q_2'} d' \mu\left(\frac{q_1q_2'}{d'}\right)
\mathop{\sideset{}{^*}\sum_{u \bmod q_1q_2r/n_1 \atop n_1 u\equiv \pm \overline{M_2^2}m \bmod d}\;
\sideset{}{^*}\sum_{u' \bmod q_1q_2'r/n_1 \atop n_1 u'\equiv \pm \overline{M_2^2}m' \bmod d'}\;}_{q_2\overline{u'}-q_2'\overline{u}  \equiv \pm \widetilde{n}_2 \bmod q_2q_2'q_1r/n_1}
1.
\ena
This expression for $\mathcal{C}_2(\widetilde{n}_2)$ has been estimated in Lin and Sun
By \cite[Lemma 4.2]{LS} (see (5.1) in \cite{LS}). More precisely, we have the following result.
\begin{lemma}\label{mathcal{C}_2 estimate}
We have
\bea\label{nonzero n2}
\mathcal{C}_2(\widetilde{n}_2)\ll
\mathop{\sum\sum}_{d_1, d_1'|q_1} d_1 d_1'
 \mathop{\sideset{}{^*}\sum_{u \bmod q_1r/n_1 \atop n_1u\equiv \pm \overline{M_2^2}m\bmod d_1}\;
\sideset{}{^*}\sum_{u' \bmod q_1r/n_1 \atop
n_1u'\equiv \pm \overline{M_2^2}m'\bmod d_1'}}_{
q_2\overline{u'} -q_2'\overline{u}\equiv \pm \widetilde{n}_2\bmod q_1r/n_1}
\mathop{\sum\sum}_{d_2|(q_2, M_2^2 q_2'n_1\pm m \widetilde{n}_2)
\atop d_2'|(q_2', M_2^2 q_2n_1\mp m' \widetilde{n}_2)} d_2 d_2'.
\eea
Moreover, if $\widetilde{n}_2=0$, we have
\bea\label{q2q2-0}
q_2=q_2'
\eea
and
\bea\label{C(0)}
\mathcal{C}_2(0)\ll  q_1q_2r\mathop{\sum\sum}_{d, d'|q_1q_2 \atop (d, d')|(m-m')}(d, d').
\eea
\end{lemma}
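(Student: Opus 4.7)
The plan is to unfold the definition of $\mathfrak{B}$ in \eqref{B-def} inside the expression for $\mathcal{C}_2(\widetilde n_2)$ and exploit the factorization $q = q_1 q_2$, where $q_1 \mid (n_1 r)^\infty$ and $(q_2, q_1 r) = 1$, so that in particular $(q_2, q_1 r/n_1) = 1$ and likewise for $q_2'$. By the Chinese remainder theorem, any divisor $d \mid q_1 q_2$ splits uniquely as $d = d_1 d_2$ with $d_1 \mid q_1,\ d_2 \mid q_2$, and the variable $u \bmod q_1 q_2 r/n_1$ decomposes as a pair $(u_1, u_2)$ with $u_1 \bmod q_1 r/n_1$, $u_2 \bmod q_2$; similarly for $d'$ and $u'$.

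Under this CRT decomposition, the three congruences governing $u,u'$ split into components. The modulo-$d$ and modulo-$d'$ conditions $n_1 u \equiv \pm\overline{M_2^2}m \bmod d$ and $n_1 u' \equiv \pm\overline{M_2^2}m' \bmod d'$ separate into constraints on $(u_1, u_1')$ mod $(d_1, d_1')$ and on $(u_2, u_2')$ mod $(d_2, d_2')$. The large congruence $q_2\overline{u'} - q_2'\overline u \equiv \pm \widetilde n_2 \bmod q_2 q_2' q_1 r/n_1$ splits (assuming $(q_2,q_2')=1$, the general case being handled analogously by isolating the common factor) into a modulo-$q_1 r/n_1$ part, which is exactly the condition appearing in \eqref{nonzero n2}, together with a mod-$q_2$ part $-q_2'\overline{u_2} \equiv \pm\widetilde n_2 \bmod q_2$ and a mod-$q_2'$ part $q_2\overline{u_2'} \equiv \pm\widetilde n_2 \bmod q_2'$. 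The former determines $u_2$ uniquely modulo $q_2$; feeding this into $n_1 u_2 \equiv \pm\overline{M_2^2} m \bmod d_2$ and clearing $n_1, M_2, q_2'$ (all units modulo $q_2$) produces the divisibility $d_2 \mid M_2^2 q_2' n_1 \pm m\widetilde n_2$, and symmetrically $d_2' \mid M_2^2 q_2 n_1 \mp m'\widetilde n_2$. Estimating $|d_2\mu(q_2/d_2)| \le d_2$ and summing trivially then gives \eqref{nonzero n2}.

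For the diagonal case $\widetilde n_2 = 0$, the mod-$q_2$ component becomes $q_2'\overline{u_2} \equiv 0 \bmod q_2$; together with $(u_2,q_2)=1$ and $(q_2',q_1 r/n_1)=1$ this forces $q_2 \mid q_2'$, and by symmetry $q_2 = q_2'$, which is \eqref{q2q2-0}. With $q_2=q_2'$, the large congruence collapses to $u \equiv u' \bmod q_1 q_2 r/n_1$, so the number of pairs $(u,u')$ is bounded by the modulus $q_1 q_2 r/n_1$; the remaining divisor constraints $n_1 u \equiv \pm\overline{M_2^2}m \bmod d$ and $n_1 u \equiv \pm\overline{M_2^2}m' \bmod d'$ impose $(d,d') \mid m-m'$, yielding \eqref{C(0)}.

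The main technical point is tracking the inverses $\overline u, \overline{u'}$ correctly as the modulus is decomposed by CRT and verifying that all the implicit coprimality conditions ($(q_2, q_1 r/n_1)=1$, $(q_2,M_2 n_1)=1$, etc.) are in place; the resulting combinatorial/divisor computation is the one already carried out in Lemma~4.2 of Lin--Sun \cite{LS}, which we invoke for the final bookkeeping.
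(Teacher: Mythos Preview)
Your proposal is correct and follows the same route as the paper: the paper simply derives the closed-form expression
\[
\mathcal{C}_2(\widetilde{n}_2)=
\sum_{d|q_1q_2} d \mu\!\left(\tfrac{q_1q_2}{d}\right)
\sum_{d'|q_1q_2'} d' \mu\!\left(\tfrac{q_1q_2'}{d'}\right)
\mathop{\sideset{}{^*}\sum_{u \bmod q_1q_2r/n_1 \atop n_1 u\equiv \pm \overline{M_2^2}m \bmod d}\;
\sideset{}{^*}\sum_{u' \bmod q_1q_2'r/n_1 \atop n_1 u'\equiv \pm \overline{M_2^2}m' \bmod d'}\;}_{q_2\overline{u'}-q_2'\overline{u}  \equiv \pm \widetilde{n}_2 \bmod q_2q_2'q_1r/n_1}
1
\]
and then invokes \cite[Lemma~4.2]{LS}, which is precisely the CRT factorization you sketch. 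One small point in your diagonal case: bounding the number of admissible $u$ merely by the modulus $q_1q_2r/n_1$ would yield $\sum dd'$ rather than $\sum (d,d')$; to land on \eqref{C(0)} you need to use that the two congruences on $u=u'$ pin $u$ down modulo $[d,d']$, so the count is $\le q_1q_2r/(n_1[d,d'])$ and $dd'/[d,d']=(d,d')$ --- but this is exactly the ``final bookkeeping'' you defer to \cite{LS}.
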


\section{Contribution from the zero frequency}

In this section we evaluate the contribution to $ \mathbf{\widetilde{\Omega}}_1^{\pm}( n_1,q_1,r)$
in \eqref{Omega_1}
(and in turn to $ \mathbf{S}_1\left( C,L,N_1,\pm,\pm\right)$ by \eqref{Sj upper bound})
from the terms with $ \widetilde{n}_2 =0$.

Denote by $\mathbf{\Delta}_0^{\pm}$ ( resp. $\mathbf{\Sigma}_0^{\pm}$ ) the contribution
from the terms with $ \widetilde{n}_2 =0$
to $\widetilde{\mathbf{\Omega}}_1^{\pm}( n_1,q_1,r)$ ( resp. $ \mathbf{S}_1\left( C,L,N_1,\pm,\pm\right)$ ).
By \eqref{q2q2-0} and \eqref{parameters}, we have
\bna
q_2=q_2',\qquad \gamma=\gamma', \qquad\eta= \widetilde{n}_2\overline{ q_2^2q_1r/n_1}.
\ena
By \eqref{C2 estimate}-\eqref{condition result}, one has $\widetilde{ \mathcal{C}}_1(0) \ll M_1^{1/2}$ unless $ m/m'=1 \in T_{\mathcal{F}}(\mathbf{F}_{M_1})$ in which case
\bna
\widetilde{ \mathcal{C}}_1(0)=c_{\mathcal{F}}(1) M_1+O\left(M_1^{1/2}\right),\qquad
|c_{\mathcal{F}}(1)|=1.
\ena
Then by \eqref{Omega_1}, \eqref{character sums decom}, \eqref{C(0)}
and Lemma \ref{integral estimate}, we have
\bea\label{compare 1}
\mathbf{\Delta}_0^{\pm}
&\ll&  \sum_{ q_2\sim C/q_1 \atop (q_2, q_1 M)=1}\frac{1}{q_2^3}
  \sum_{m\sim N_1}\frac{1}{m^{1/2}}\,
  \sum_{m'\sim N_1}\frac{|\lambda_g(m')|^2}{m'^{1/2}}M_1^2M_2
\left|\mathcal{C}_1(0)\right|\,\left|\mathcal{C}_2(0)\right|\,
\left| \mathcal{H}^{\pm,\pm}(0)\right|\nonumber\\
&\ll&  \frac{M^{1+\varepsilon}M_1C}{QN_1}\sum_{ q_2\sim C/q_1 \atop (q_2, q_1 M)=1}q_2^{-3}
\mathop{\sum\sum}_{m,m'\sim N_1}|\lambda_g(m')|^2
\left(M_1 \delta_{m/m'=1\in T_{\mathcal{F}}(\mathbf{F}_{M_1})} +M_1^{1/2} \right)q_1q_2r
\mathop{\sum\sum}_{d, d'|q_1q_2 \atop (d, d')|(m-m')}(d, d')\nonumber\\
&\ll&  \frac{M^{1+\varepsilon}M_1C^2r}{QN_1}\sum_{ q_2\sim C/q_1 \atop (q_2, q_1 M)=1}q_2^{-3}\sum_{ \ell |q_1q_2}\ell
\mathop{\sum\sum}_{m,m'\sim N_1 \atop \ell|(m-m')}|\lambda_g(m')|^2
\left(M_1 \delta_{m/m'=1\in T_{\mathcal{F}}(\mathbf{F}_{M_1})} +M_1^{1/2} \right)\nonumber\\
&\ll&  \frac{M^{1+\varepsilon}M_1C^2r}{Q}\sum_{ q_2\sim C/q_1 \atop (q_2, q_1 M)=1}q_2^{-3}\sum_{ \ell |q_1q_2}\ell
\left(M_1\left(\frac{N_1}{M_1 \ell}+1\right)+M_1^{1/2}\left(\frac{N_1}{\ell}+1\right) \right)\nonumber\\
&\ll&  \frac{M^{1+\varepsilon}M_1C^2r}{Q}\left(\frac{q_1}{C}\right)^2
\left(M_1\left(\frac{N_1}{M_1 }+C\right)+M_1^{1/2}(N_1+C) \right)\nonumber\\
&\ll&  \frac{M^{1+\varepsilon}M_1q_1^2r}{Q}
\left(M_1C+M_1^{1/2}N_1 \right)\\
&\ll&  \frac{M^{1+\varepsilon}M_1q_1^2r}{Q}
\left(M_1Q+ \frac{M^2}{M_1^{1/2}} \right)\nonumber
\eea
recalling \eqref{N_j} that $ N_1\leq N_1^*=M^{1+\varepsilon}M_2$ and $C\ll Q$.
Here we have used Rankin-Selberg's estimate in \eqref{GL2 RS}.

Plugging this bound into \eqref{Sj upper bound} one has
\bna
\mathbf{\Sigma}_0^\pm
&\ll&
\frac{N^{1+\varepsilon} L^{1/2}}{r^{1/2}Q M_1^{3/2} M^{1/2} }
\bigg\{ \sum_{n_1 \leq Cr}\sum_{n_1|q_1r \atop q_1|(n_1r)^{\infty}}q_1^{-3}
\frac{M M_1q_1^2r}{Q}
\bigg(M_1Q+ \frac{M^2}{M_1^{1/2}} \bigg)\bigg\}^{1/2}\\
&\ll&
\frac{N^{1+\varepsilon} L^{1/2}}{r^{1/2}Q M_1^{3/2} M^{1/2} }
\frac{M^{1/2} M_1^{1/2} r^{1/2}}{Q^{1/2}}
\bigg(M_1^{1/2}Q^{1/2}+ MM_1^{-1/4} \bigg)\\
&\ll&
\frac{N^{1+\varepsilon} L^{1/2}}{M_1 Q^{3/2}}
\bigg(M_1^{1/2}Q^{1/2}+ MM_1^{-1/4} \bigg)
\ena
Therefore, the contribution from the terms with $ \widetilde{n}_2 =0$ to $\mathbf{S}_1$ in
\eqref{S_j after reducing} is at most
\bea\label{S1-zero}
&&\sum_{ L\ll\frac{N^{2+\varepsilon}r}{Q^3} \atop L \,\mathrm{dyadic}}
\sum_{N_1 \ll N_1^ \ast \atop N_1 \,\mathrm{dyadic}}
\sum_{ C\ll Q \atop C \,\mathrm{dyadic}} \frac{N^{1+\varepsilon} L^{1/2}}{M_1 Q^{3/2}}
\bigg(M_1^{1/2}Q^{1/2}+ MM_1^{-1/4} \bigg)\nonumber\\
&\ll&\frac{N^{1+\varepsilon}}{M_1 Q^{3/2}}\left(\frac{N^2r}{Q^3}\right)^{1/2}
\bigg(M_1^{1/2}Q^{1/2}+MM_1^{-1/4} \bigg)\nonumber\\
&\ll&r^{1/2}N^{1/2+\varepsilon}M_1^{1/2}
\bigg(N^{1/4}M_1^{1/4}+ MM_1^{-1/4} \bigg)
\eea
recalling $Q=\sqrt{N/M_1}$.

\section{Contribution from the nonzero frequencies}

In this section we evaluate the contribution to $ \mathbf{\widetilde{\Omega}}_1^{\pm}( n_1,q_1,r)$
in \eqref{Omega_1}
(and in turn to $ \mathbf{S}_1\left( C,L,N_1,\pm,\pm\right)$ by \eqref{Sj upper bound})
from the terms with $ \widetilde{n}_2 \neq 0$.

Denote by $\mathbf{\Delta}_{\neq 0}^{\pm}$ ( resp. $\mathbf{\Sigma}_{\neq 0}^{\pm}$ ) the contribution
from the terms with $ \widetilde{n}_2 \neq 0$
to $\widetilde{\mathbf{\Omega}}_1^{\pm}( n_1,q_1,r)$ ( resp. $ \mathbf{S}_1\left( C,L,N_1,\pm,\pm\right)$ ).
We distinguish two cases according as $\widetilde{n}_2\not\equiv 0 \bmod M_1$ or
$\widetilde{n}_2\equiv 0 \bmod M_1$.
\subsection{ $\widetilde{n}_2\not\equiv 0 \bmod M_1$}
Denote the contribution from $\widetilde{n}_2\not\equiv 0 \bmod M_1$ by $\mathbf{\Delta}_{\neq 0}^{\pm,a}$.
By \eqref{Omega_1},\eqref{widetilde{n}_2} and \eqref{C2 estimate}, we have
\bna
\mathbf{\Delta}_{\neq 0}^{\pm,a}&\ll& \frac{M^{1+\varepsilon}M_1}{N_1}
 \sum_{ q_2\sim C/q_1 \atop (q_2, q_1 M)=1}q_2^{-3/2}
 \sum_{ q_2'\sim C/q_1 \atop (q_2', q_1 M)=1}q_2'^{-3/2}
  \sum_{m\sim N_1}
  \sum_{m'\sim N_1}|\lambda_g(m')|^2\nonumber\\
&\times&
\sum_{0\neq \widetilde{n}_2\ll L^*}\left|\mathcal{C}_1(\widetilde{n}_2)\right|
\left|\mathcal{C}_2(\widetilde{n}_2)\right|
\bigg| \mathcal{H}^{\pm,\pm}\left( \frac{\widetilde{n}_2 L}{M_1q_2q_2'q_1n_1r}\right)\bigg|,
\ena
Applying \eqref{character sums decom}, \eqref{nonzero n2} and Lemma \ref{integral estimate}, we obtain
\bea\label{nonzreo}
\mathbf{\Delta}_{\neq 0}^{\pm,a}&\ll& \frac{M^{1+\varepsilon}M_1^{3/2}C}{QN_1}
 \sum_{ q_2\sim C/q_1 \atop (q_2, q_1 M)=1}q_2^{-3/2}
 \sum_{ q_2'\sim C/q_1 \atop (q_2', q_1 M)=1}q_2'^{-3/2}
  \sum_{m\sim N_1}
  \sum_{m'\sim N_1}|\lambda_g(m')|^2\nonumber\\
&\times&
\sum_{0\neq \widetilde{n}_2\ll L^*}
\mathop{\sum\sum}_{d_1, d_1'|q_1} d_1 d_1'
 \mathop{\sideset{}{^*}\sum_{u \bmod q_1r/n_1 \atop n_1u\equiv \pm \overline{M_2^2}m\bmod d_1}\;
\sideset{}{^*}\sum_{u' \bmod q_1r/n_1 \atop
n_1u'\equiv \pm \overline{M_2^2}m'\bmod d_1'}}_{
q_2\overline{u'} -q_2'\overline{u}\equiv \pm \widetilde{n}_2\bmod q_1r/n_1}
\mathop{\sum\sum}_{d_2|(q_2, M_2^2 q_2'n_1\pm m \widetilde{n}_2)
\atop d_2'|(q_2', M_2^2 q_2n_1\mp m' \widetilde{n}_2)} d_2 d_2'.
\eea

Next we proceed as in \cite{LS}, Section 4.5.
Writing $q_2d_2$ in place of $q_2$ and $q_2'd_2'$ in place of $q_2'$,
and noting that for fixed $(u, d_2, d_2', q_2, q_2', n_2)$
the congruence condition
$\widetilde{n}_2\equiv d_2q_2\overline{u'}-d_2'q_2'\overline{u}\,(\bmod \,q_1 r/n_1)$
determines $u'$ uniquely, we infer
\bna
\mathbf{\Delta}_{\neq 0}^{\pm,a}&\ll&
\frac{M^{1+\varepsilon}M_1^{3/2}q_1^3}{C^2QN_1}\;\sideset{}{^*}\sum_{u\bmod q_1r/n_1}
\mathop{\sum\sum}_{d_1, d_1'|q_1} d_1 d_1'
\mathop{\sum}_{\substack {d_2\ll C/q_1\\(d_2,d_1)=1}}
\mathop{\sum}_{\substack {d_2'\ll C/q_1\\(d_2',d_1')=1}}d_2d_2'
\mathop{\sum\sum}_{\substack{q_2\sim C/q_1d_2\\q_2'\sim C/q_1d_2'}}
\\
&\times&
\sum_{0\neq \widetilde{n}_2\ll L^*}
  \mathop{\sum}_{\substack {m'\sim N_1\\M_2^2 q_2d_2n_1\mp m' \widetilde{n}_2\equiv 0\bmod d_2'}}
  |\lambda_g(m')|^2
  \mathop{\sum}_{\substack{m\sim N_1\\
                 n_1 u\equiv \pm \overline{M_2^2}m\bmod d_1\\
   M_2^2 q_2'd_2'n_1\pm m \widetilde{n}_2\equiv 0\bmod d_2}}1.
\ena
Notice that for fixed tuple $(n_1,u, \widetilde{n}_2)$ the congruences
$$
\left\{\begin{array}{l}
n_1u\equiv \pm \overline{M_2^2}m\bmod d_1\\
M_2^2 q_2'd_2'n_1\pm m \widetilde{n}_2\equiv 0\bmod d_2
\end{array}
\right.
$$
imply that $m$ is uniquely determined modulo
$d_1d_2/(d_2,\widetilde{n}_2)$. Therefore the number of $m$ is dominated by
$O\left((d_2,\widetilde{n}_2)\left(1+N_1/d_1d_2\right)\right)$.
We conclude that
\bna
\mathbf{\Delta}_{\neq 0}^{\pm,a}&\ll&
\frac{M^{1+\varepsilon}M_1^{3/2}q_1^4r}{C^2QN_1n_1}
\mathop{\sum\sum}_{d_1, d_1'|q_1} d_1 d_1'
\mathop{\sum}_{\substack {d_2\ll C/q_1\\(d_2,d_1)=1}}
\mathop{\sum}_{\substack {d_2'\ll C/q_1\\(d_2',d_1')=1}}d_2d_2'
\mathop{\sum\sum}_{\substack{q_2\sim C/q_1d_2\\q_2'\sim C/q_1d_2'}}
\\
&\times&
\sum_{0\neq \widetilde{n}_2\ll L^*\atop (d_2,\widetilde{n}_2)| q_2'd_2'n_1}
  (d_2,\widetilde{n}_2)\left(1+\frac{N_1}{d_1d_2}\right)
  \mathop{\sum}_{\substack {m'\sim N_1\\M_2^2 q_2d_2n_1\mp m' \widetilde{n}_2\equiv 0\bmod d_2'}}
  |\lambda_g(m')|^2.
\ena

\textit{Case 1}. If $M_2^2q_2d_2n_1 \mp m'\widetilde{n}_2\equiv 0\bmod d_2'$ but
$M_2^2q_2d_2n_1 \mp m_2\widetilde{n}_2\neq 0$, then
$d_2'$ is a factor of the integer $M_2^2q_2d_2n_1 \mp m'\widetilde{n}_2$. Therefore by
switching the order of summation, the $d_2'$-sum is bounded above
by $\tau(|M_2^2q_2d_2n_1 \mp m'\widetilde{n}_2|)=O(N^\varepsilon)$
with $\tau(n)$ being the divisor function, and the contribution from this case is
\bna
&\ll&
\frac{M^{1+\varepsilon}M_1^{3/2}q_1^3r}{CQN_1n_1}
\mathop{\sum\sum}_{d_1, d_1'|q_1} d_1 d_1'
\mathop{\sum}_{\substack {d_2\ll C/q_1\\(d_2,d_1)=1}}d_2
\mathop{\sum}_{\substack{q_2\sim C/q_1d_2}}
\sum_{0\neq \widetilde{n}_2\ll L^*}
  (d_2,\widetilde{n}_2)\left(1+\frac{N_1}{d_1d_2}\right)\\
&\times&
\mathop{\sum}_{\substack {d_2'\ll C/q_1\\(d_2',d_1')=1}}d_2'
  \mathop{\sum}_{\substack {m'\sim N_1\\M_2^2 q_2d_2n_1\mp m' \widetilde{n}_2\equiv 0\bmod d_2'}}
  |\lambda_g(m')|^2.
\\
&\ll&
\frac{M^{1+\varepsilon}M_1^{3/2}q_1^2r}{QN_1n_1}
\mathop{\sum\sum}_{d_1, d_1'|q_1} d_1 d_1'
\mathop{\sum}_{\substack {d_2\ll C/q_1\\(d_2,d_1)=1}}L^*\left(1+\frac{N_1}{d_1d_2}\right)N_1
\\
&\ll&
\frac{L^*M^{1+\varepsilon}M_1^{3/2}q_1^3r}{Qn_1}(C+N_1).
\ena

\textit{Case 2}. If $M_2^2q_2d_2n_1 \mp m'\widetilde{n}_2=0$,
then as long as $m'$ and $\widetilde{n}_2$ are fixed, the number of tuples $(q_2,d_2,n_1)$
is bounded above by the ternary divisor function $\tau_3(|m'\widetilde{n}_2|)$.
Hence such a contribution is dominated by
\bna
&\ll&
\frac{M^{1+\varepsilon}M_1^{3/2}q_1^4r}{C^2QN_1n_1}
\mathop{\sum\sum}_{d_1, d_1'|q_1} d_1 d_1'
\mathop{\sum}_{\substack {d_2'\ll C/q_1\\(d_2',d_1')=1}}d_2'
\mathop{\sum}_{\substack{q_2'\sim C/q_1d_2'}}\,
\sum_{0\neq \widetilde{n}_2\ll L^*}\\
&\times&
\sum_{m'\sim N_1}|\lambda_g(m')|^2
\mathop{\sum}_{\substack {d_2\ll C/q_1\atop (d_2,\widetilde{n}_2)| q_2'd_2'n_1}}
  (d_2,\widetilde{n}_2)\left(d_2+\frac{N_1}{d_1}\right)
\mathop{\sum}_{\substack{q_2\sim C/q_1d_2\\M_2^2 q_2d_2n_1\mp m' \widetilde{n}_2=0}}1
\\
&\ll&
\frac{M^{1+\varepsilon}M_1^{3/2}q_1^4r}{C^2QN_1n_1}
\mathop{\sum\sum}_{d_1, d_1'|q_1} d_1 d_1'
\mathop{\sum}_{\substack {d_2'\ll C/q_1\\(d_2',d_1')=1}}d_2'
\mathop{\sum}_{\substack{q_2'\sim C/q_1d_2'}} \;\sum_{\ell |q_2'd_2'n_1}\ell
\\
&\times&\sum_{0\neq \widetilde{n}_2\ll L^*/\ell}\;
 \mathop{\sum}_{m' \sim N_1}|\lambda_g(m')|^2
\mathop{\sum}_{\substack {d_2\ll C/q_1\atop d_2n_1| m'\widetilde{n}_2\ell}}\left(d_2+\frac{N_1}{d_1}\right)
\\
&\ll&
\frac{L^*M^{1+\varepsilon}M_1^{3/2}q_1^2r}{Q^2n_1}
\mathop{\sum\sum}_{d_1, d_1'|q_1} d_1 d_1'
\left(\frac{C}{q_1}+\frac{N_1}{d_1}\right)
\\
&\ll&
\frac{L^*M^{1+\varepsilon}M_1^{3/2}q_1^3r}{Qn_1}(C+N_1).
\ena
Assembling the above argument, we have
\bea\label{nonzreo1}
\mathbf{\Delta}_{\neq 0}^{\pm,a}\ll \frac{L^*M^{1+\varepsilon}M_1^{3/2}q_1^3r}{Qn_1}(C+N_1).
\eea

\subsection{$\widetilde{n}_2\equiv 0 \bmod M_1$}
Denote the contribution from $\widetilde{n}_2\equiv 0 \bmod M_1$ by $\mathbf{\Delta}_{\neq 0}^{\pm,b}$.
In this case we can write $\widetilde{n}_2=M_1\widetilde{n}_2'$ with $\widetilde{n}_2'\ll L^*/M_1$
and run exactly the same argument as in Section 4.1, except for replacing $L^*$ by $L^*/M_1$ and
replacing the estimate
\bna
\mathcal{C}_1(\widetilde{n}_2)\ll M_1^{1/2}
\ena
in \eqref{C2 estimate} with
\bna
\mathcal{C}_1(\widetilde{n}_2)\ll M_1
\ena
from \eqref{condition result}. Therefore, by comparison with \eqref{nonzreo1} one has
\bea\label{nonzero2}
\mathbf{\Delta}_{\neq 0}^{\pm,b}\ll \frac{L^*M^{1+\varepsilon}M_1q_1^3r}{Qn_1}
\left(C+N_1\right).
\eea

By  \eqref{nonzreo1} and \eqref{nonzero2}, we obtain
\bna
\mathbf{\Delta}_{\neq 0}^{\pm}\ll \frac{L^*M^{1+\varepsilon}M_1^{3/2}q_1^3r}{Qn_1}(C+N_1)
\ll \frac{M^{1+\varepsilon}M_1^{5/2}Qq_1^2r^2}{L}(Q+MM_2)
\ena
recalling \eqref{N_j} and \eqref{widetilde{n}_2} that $ N_1\leq N_1^*=M^{1+\varepsilon}M_2$,
$\widetilde{n}_2 \ll Q^{1+\varepsilon}M_1Cn_1r/(q_1L) := L^*$ and $C\ll Q$.
Plugging this bound into \eqref{Sj upper bound} one has
\bna
\mathbf{\Sigma}_0^\pm
&\ll&
\frac{N^{1+\varepsilon} L^{1/2}}{r^{1/2}Q M_1^{3/2} M^{1/2} }
\bigg\{ \sum_{n_1 \leq Cr}\sum_{n_1|q_1r \atop q_1|(n_1r)^{\infty}}q_1^{-3}
\frac{MM_1^{5/2}Qq_1^2r^2}{L}(Q+MM_2) \bigg\}^{1/2}\\
&\ll&
\frac{N^{1+\varepsilon} L^{1/2}}{r^{1/2}Q M_1^{3/2} M^{1/2} }
\frac{M^{1/2}M_1^{5/4}Q^{1/2}r}{L^{1/2}}(Q^{1/2}+M^{1/2}M_2^{1/2})\\
&\ll&
\frac{N^{1+\varepsilon} r^{1/2}}{M_1^{1/4} Q^{1/2}}
(Q^{1/2}+M^{1/2}M_2^{1/2}).
\ena
Therefore, the contribution from the terms with $ \widetilde{n}_2 \neq 0$ to $\mathbf{S}_1$ in
\eqref{S_j after reducing} is at most
\bea\label{S1-nonzero}
&&\sum_{ L\ll\frac{N^{2+\varepsilon}r}{Q^3} \atop L \,\mathrm{dyadic}}
\sum_{N_1 \ll N_1^ \ast \atop N_1 \,\mathrm{dyadic}}
\sum_{ C\ll Q \atop C \,\mathrm{dyadic}} \frac{N^{1+\varepsilon} r^{1/2}}{M_1^{1/4} Q^{1/2}}
(Q^{1/2}+M^{1/2}M_2^{1/2})\nonumber\\
&\ll&r^{1/2}N^{1+\varepsilon}M_1^{-1/4}
\bigg(1+ MM_1^{-1/4}N^{-1/4} \bigg)
\eea
recalling $Q=\sqrt{N/M_1}$.

\subsection{Conclusion}
By \eqref{S1-zero} and \eqref{S1-nonzero}, we have
\bea\label{S1111}
\mathbf{S}_1&\ll&r^{1/2}N^{1/2+\varepsilon}M_1^{1/2}
\bigg(N^{1/4}M_1^{1/4}+ MM_1^{-1/4} \bigg)+r^{1/2}N^{1+\varepsilon}M_1^{-1/4}
\bigg(1+ MM_1^{-1/4}N^{-1/4} \bigg)\nonumber\\
&\ll&r^{1/2}N^{1/2+\varepsilon}
\bigg(N^{1/4}M_1^{3/4}+ MM_1^{1/4} +N^{1/2+\varepsilon}M_1^{-1/4}+
N^{1/4}MM_1^{-1/2} \bigg).
\eea
Thus
\bna
&&\sum_{r\leq M^\theta}
\frac{1}{r}\sup_{M^{3-\theta}/r^2\leq N \leq M^{3+\epsilon}/r^2}
\frac{|\mathbf{S}_1|}{\sqrt N}\nonumber\\
&\ll&M^{\varepsilon}\sum_{r\leq M^\theta}
\frac{1}{r^{1/2}}\bigg(\frac{M^{3/4}M_1^{3/4}}{r^{1/2}}+MM_1^{1/4}
+\frac{M^{3/2}M_1^{-1/4}}{r}+\frac{M^{7/4}M_1^{-1/2}}{r^{1/2}}\bigg)\nonumber\\
&\ll&M^{3/4}M_1^{3/4}+M^{1+\theta/2}M_1^{1/4}
+M^{3/2}M_1^{-1/4}+M^{7/4}M_1^{-1/2}.
\ena
Furthermore, we will show in the next section that
$\mathbf{S}_2$
can be dominated by the upper bound in \eqref{S1111}.
Combining this bound with \eqref{initial decomposition} Proposition \ref{prop1} follows.

\section{Estimation of $\mathbf{S}_2$}
In this section, we will estimate $\mathbf{S}_2$ in \eqref{S_j} and its contribution to
the right hand side of \eqref{set up}. Since the proof is very similar as that for $\mathbf{S}_1$,
we will be brief.
Recall \eqref{C_2} and \eqref{Omega_j} that
\bea\label{Omega_2}
\mathbf{\Omega}_2^{\pm}( n_1,q_1,r)&=&
\sum_{n_2}\frac{1}{n_2}\phi\left( \frac{n_2}{L/n_1^2}\right)
\bigg| \sum_{ q_2\sim C/q_1 \atop (q_2, q_1 M)=1}q_2^{-3/2}
  \sum_{m\sim N_2}\frac{\lambda_g(m)}{m^{1/2}}\nonumber\\&&\times
 \mathfrak{C}_2(n_{1},\pm n_{2}, \pm m, q_1q_2)\,
 \mathfrak{R}^{\pm,\pm}\left( Y_2, \frac{n_{1}^2 n_{2}}{ q_1^3 q_2^3 M_1^3r},q_1q_2\right) \bigg|^2,
\eea
where $Y_2=mN/(q^2 M_2^2)$ and
\bna
\mathfrak{C}_2(n_{1}, n_{2},  m, q)=
\chi_1( q\overline{M_2 r} (qr/n_1)^2 )
\chi_2(q^2  \overline{mM_1})
\tau^2(\chi_1)\tau(\chi_2)
\mathfrak{B}(n_1,n_2,M_1^2m;q).
\ena
Note that
\bna
n_2\mapsto \mathfrak{C}_2(n_{1},\pm n_{2}, \pm m, q_1q_2)\,
\overline{\mathfrak{C}_2(n_{1},\pm n_{2}, \pm m', q_1q_2')}
\ena
is periodic of period $q_2q_2'q_1r/n_1$. Opening the absolute square in \eqref{Omega_2},
we break the $n_2$-sum into congruence classes modulo $q_2q_2'q_1r/n_1 $ and then
apply the Poisson summation formula to the sum over $n_2$. It is therefore sufficient
to consider the following sum:
\bea\label{Omega_22}
\widetilde{\mathbf{\Omega}}_2^{\pm}( n_1,q_1,r)&=&
 \sum_{ q_2\sim C/q_1 \atop (q_2, q_1 M)=1}q_2^{-3/2}
 \sum_{ q_2'\sim C/q_1 \atop (q_2', q_1 M)=1}q_2'^{-3/2}
  \sum_{m\sim N_2}\frac{1}{m^{1/2}}\,
  \sum_{m'\sim N_2}\frac{|\lambda_g(m')|^2}{m'^{1/2}}\nonumber\\
&\times&
\sum_{\widetilde{n}_2\in\mathbf{Z}}\left|\mathcal{D}(\widetilde{n}_2)\right|\,
\bigg| \mathcal{K}^{\pm,\pm}\left( \frac{\widetilde{n}_2 L}{q_2q_2'q_1n_1r}\right)\bigg|,
\eea
where the character sum
$ \mathcal{D}(\widetilde{n}_2)= \mathcal{C}(\widetilde{n}_2,m,m',q_2,q_2',n_1,q_1,r)$ is given by
\bna
\mathcal{D}(\widetilde{n}_2)=\frac{n_1}{q_2q_2'q_1r}\sum_{ v \bmod q_2q_2'q_1r/n_1}
\mathfrak{C}_2(n_{1},\pm v, \pm m, q_1q_2)\,
\overline{\mathfrak{C}_2(n_{1},\pm v, \pm m', q_1q_2')}
e\left(\frac{\widetilde{n}_2v}{q_2q_2'q_1r/n_1}\right),
\ena
and the integral $\mathcal{K}^{\pm,\pm}(X) = \mathcal{K}^{\pm,\pm}(X;m,m',q_2,q_2',q_1,r)$ is given by
\bna
\mathcal{K}^{\pm,\pm}(X)=\int_{\mathbb{R}}\phi(\xi )\,
\mathfrak{R}^{\pm,\pm}\left( \frac{mN}{q_1^2q_2^2 M_2^2}, \frac{L\xi }{ q_1^3 q_2^3 M_1^3r},q_1q_2\right)
\overline{\mathfrak{R}^{\pm,\pm}\left( \frac{m'N}{q_1^2q_2'^2 M_2^2}, \frac{L\xi}{ q_1^3 q_2'^3 M_1^3r},q_1q_2'\right)}
e( -X\xi )\frac{\mathrm{d}\xi}{\xi}.
\ena

The integral $\mathcal{K}^{\pm,\pm}(X)$ can be estimated exactly the same as $\mathcal{H}^{\pm,\pm}(X)$.
\begin{lemma}\label{integral estimate2}
The integral $\mathcal{K}^{\pm,\pm}(X)$ is negligibly small unless
$|X|\ll M^{\varepsilon}Q/C$, in which case
\bna
\mathcal{K}^{\pm,\pm}(X)\ll M^{\varepsilon}  C/Q.
\ena
\end{lemma}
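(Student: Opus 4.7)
The plan is to follow the proof of Lemma \ref{integral estimate} essentially verbatim, since the only structural difference between $\mathcal{K}^{\pm,\pm}(X)$ and $\mathcal{H}^{\pm,\pm}(X)$ is that the first argument of each $\mathfrak{R}^{\pm,\pm}$ factor is changed from $Y_1 = mN/(q^2M^2)$ to $Y_2 = mN/(q^2M_2^2)$; the $\mathfrak{J}^{\pm}$ piece with argument $L\xi/(q^3 M_1^3 r)$, the cutoff $U(\zeta/M^\varepsilon)$, the weight $\omega(q,\zeta)$, and the $\xi$-weight $\phi$ are all identical. So I would first establish, by the same differentiation-under-the-integral procedure used for $\mathcal{H}^{\pm,\pm}$, the derivative estimate
\[
\xi^{\ell}\frac{\partial^{\ell}}{\partial \xi^{\ell}}\mathfrak{R}^{\pm,\pm}\!\left(Y_2,\frac{L\xi}{q^3 M_1^3 r},q\right)\ll M^{\varepsilon}\left(\frac{Q}{C}\right)^{\ell-1/2},\qquad q\sim C,
\]
and then conclude via repeated integration by parts on the $\xi$-integral.

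The verification of this derivative bound splits, as in Lemma \ref{integral estimate}, according to the size of $Y_2$. When $Y_2\gg M^{\varepsilon}$, the representation of $\mathfrak{I}^{\pm}(Y_2,q,\zeta)$ as an oscillatory integral with phase $\zeta N v_1/(qQM_1)\pm 2\sqrt{Y_2 v_1}$ is substituted, and the $\zeta$-integration shows (by the decay of $\omega(q,\zeta)$) that $v_1-v_2$ is confined to $|v_1-v_2|\ll CM^{\varepsilon}/Q$. Stirling's expansion of $\gamma_{\pm}(-1/2+i\tau)$ gives a phase $f(v_1,\tau;v)$ whose Hessian satisfies $|\det f''|\gg 1$ with scales $\rho_1=|\tau|$ and $\rho_2=|\tau|^{-1}$; the two-dimensional second derivative test (Huxley) then yields the $(Q/C)^{\ell-1/2}$ factor. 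Crucially, the parameter $Y$ enters only through the $\pm 2\sqrt{Yv_1}$ term, so replacing $Y_1$ by $Y_2$ leaves $\partial^2 f/\partial\tau^2$, $\partial^2 f/\partial v_1\partial\tau$, and the dominant contribution to $\partial^2 f/\partial v_1^2$ unaffected, and the determinant bound goes through identically. When $Y_2\ll M^{\varepsilon}$ one uses instead the Bessel representation of $\mathfrak{I}^{\pm}$ together with the bound $\mathbf{J}_g^{\pm}(4\pi\sqrt{Y_2 v_1})\ll Y_2^{-1/4}$ from \eqref{Bessel-small} and the estimate \eqref{111}, which yields the same final bound.

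Once the derivative estimate is in hand, integrating by parts $\ell$ times in the $\xi$-integral in the definition of $\mathcal{K}^{\pm,\pm}(X)$ produces
\[
\mathcal{K}^{\pm,\pm}(X)\ll_{\ell} M^{\varepsilon}\,\frac{C}{Q}\left(\frac{XC}{Q}\right)^{-\ell}
\]
for every $\ell\geq 0$. Choosing $\ell$ large gives the negligibility claim for $|X|\gg M^{\varepsilon}Q/C$, and $\ell=0$ gives the pointwise bound $\mathcal{K}^{\pm,\pm}(X)\ll M^{\varepsilon}C/Q$, as asserted.

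The only point that requires a moment of care, rather than genuine difficulty, is to confirm that the passage from $M$ to $M_2$ in $Y_2=mN/(q^2M_2^2)$ does not degrade any of the estimates: the stationary phase regime still has $\Xi\asymp \sqrt{Y_2}$ which matches the scales used in the Huxley test, and the small-$Y$ regime simply uses the same Bessel bound. Since neither regime uses any specific relation between the size of $Y_j$ and the other parameters beyond what \eqref{rapid decay omega} and Stirling supply, the argument of Lemma \ref{integral estimate} transfers with no modification, which is why the authors are content to state the result without giving a separate proof.
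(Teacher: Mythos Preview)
Your proposal is correct and follows exactly the approach the paper intends: the authors give no separate proof of Lemma~\ref{integral estimate2} beyond the one-line remark that ``$\mathcal{K}^{\pm,\pm}(X)$ can be estimated exactly the same as $\mathcal{H}^{\pm,\pm}(X)$,'' and your write-up spells out precisely why replacing $Y_1$ by $Y_2$ leaves every step of the proof of Lemma~\ref{integral estimate} intact.
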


By Lemma \ref{integral estimate2}, the sum over $ \widetilde{n}_2$ in \eqref{Omega_22}
can be restricted to, up to a negligibly error,
\bea\label{n2-2}
\widetilde{n}_2 \ll Q^{1+\varepsilon}Cn_1r/(q_1L) := L^{**}.
\eea

Next, we estimate $\mathcal{D}(\widetilde{n}_2)$.
By \eqref{C2}, we have
\bea\label{character sum-222}
|\mathcal{D}(\widetilde{n}_2)|&=& \frac{n_1}{q_2q_2'q_1r}
\bigg|
\sum_{ v \bmod q_2q_2'q_1r/n_1} \chi_1(q_1q_2\overline{M_2r}( q_1q_2r/n_1)^2 )
\chi_2(\pm q_1^2q_2^2 \overline{ m M_1})\tau^2(\chi_1 ) \tau(\chi_2 )
B( n_1, \pm v, \pm M_1^2 m; q_1q_2)\nonumber\\
 &&
\overline{\chi_1(q_1q_2'\overline{M_2r}( q_1q_2' r/n_1)^2 ) }
\chi_2(\pm \overline{q_1^2q_2'^2}  m' M_1) \overline{\tau^2(\chi_1 )}\;\overline{ \tau(\chi_2 )}\;
\overline{B( n_1, \pm v, \pm M_1^2 m'; q_1q_2')  }
e\left( \frac{\widetilde{n}_2 v}{ q_2q_2'q_1r/n_1}\right)
\bigg|\nonumber\\
&\leq& M_1^2M_2 | \mathcal{C}_2^*(\widetilde{n}_2) |,
\eea
where
\bna
| \mathcal{C}_2^*(\widetilde{n}_2) )|=\frac{n_1}{q_2q_2'q_1r}
\sum_{ v \bmod q_2q_2'q_1r/n_1}B( n_1, \pm v, \pm M_1^2 m; q_1q_2)
\overline{B( n_1, \pm v, \pm M_1^2 m'; q_1q_2')  }
e\left( \frac{\widetilde{n}_2 v}{ q_2q_2'q_1r/n_1}\right).
\ena
The character sum $ \mathcal{C}_2^*(\widetilde{n}_2)  $ can be estimated similarly as $ \mathcal{C}_2(\widetilde{n}_2)  $ in \eqref{C22}.
More precisely, we have the following estimate.
\begin{lemma}\label{mathcal{C} estimate}
We have
\bna
\mathcal{C}_2^*(\widetilde{n}_2)\ll
\mathop{\sum\sum}_{d_1, d_1'|q_1} d_1 d_1'
 \mathop{\sideset{}{^*}\sum_{u \bmod q_1r/n_1 \atop n_1u\equiv \pm \overline{M_2^2}M_1^2m\bmod d_1}\;
\sideset{}{^*}\sum_{u' \bmod q_1r/n_1 \atop
n_1u'\equiv \pm \overline{M_2^2}M_1^2m'\bmod d_1'}}_{
q_2\overline{u'} -q_2'\overline{u}\equiv \pm M_1\widetilde{n}_2\bmod q_1r/n_1}
\mathop{\sum\sum}_{d_2|(q_2, \overline{M_1^3}M_2^2 q_2'n_1\pm m \widetilde{n}_2)
\atop d_2'|(q_2', \overline{M_1^3}M_2^2 q_2n_1\mp m' \widetilde{n}_2)} d_2 d_2'.
\ena
Moreover, if $\widetilde{n}_2=0$, we have
\bna
q_2=q_2'
\ena
and
\bna
\mathcal{C}_2^*(0)\ll  q_1q_2r\mathop{\sum\sum}_{d, d'|q_1q_2 \atop (d, d')|(m-m')}(d, d').
\ena
\end{lemma}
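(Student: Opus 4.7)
The strategy is to mirror the proof of Lemma \ref{mathcal{C}_2 estimate} (the corresponding estimate for $\mathcal{C}_2(\widetilde{n}_2)$), adjusting only for the two cosmetic differences between $\mathcal{C}_2^*$ and $\mathcal{C}_2$: the arguments of the $\mathfrak{B}$-factors are $\pm M_1^2 m$ and $\pm M_1^2 m'$ in place of $\pm m, \pm m'$, and the exponential phase in $v$ lacks the factor $\overline{M_1}$. Concretely, I would first substitute the definition \eqref{B-def} of $\mathfrak{B}$ into $\mathcal{C}_2^*(\widetilde{n}_2)$ and open the absolute square. This produces a quadruple sum over divisors $d \mid q_1 q_2$, $d' \mid q_1 q_2'$ and units $u \bmod q_1 q_2 r/n_1$, $u' \bmod q_1 q_2' r/n_1$, constrained by $n_1 u \equiv \pm \overline{M_2^2} M_1^2 m \bmod d$ and $n_1 u' \equiv \pm \overline{M_2^2} M_1^2 m' \bmod d'$, together with a $v$-sum modulo $q_1 q_2 q_2' r/n_1$ whose phase, after bringing to a common denominator, is
\[
e\!\left(\frac{\pm q_2' \overline{M_1 u} \mp q_2 \overline{M_1 u'} + \widetilde{n}_2}{q_1 q_2 q_2' r/n_1}\right).
\]

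Orthogonality of the $v$-sum then imposes $\pm q_2' \overline{u} \mp q_2 \overline{u'} + M_1 \widetilde{n}_2 \equiv 0 \pmod{q_1 q_2 q_2' r/n_1}$, after multiplying through by $M_1$ (which is coprime to the modulus). I would then decompose this by CRT into its three components modulo $q_1 r/n_1$, $q_2$, and $q_2'$, and write $d = d_1 d_2$, $d' = d_1' d_2'$ with $d_1, d_1' \mid q_1$ and $d_2 \mid q_2$, $d_2' \mid q_2'$. The $q_1 r/n_1$-component yields the stated relation $q_2 \overline{u'} - q_2' \overline{u} \equiv \pm M_1 \widetilde{n}_2 \bmod q_1 r/n_1$. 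The $q_2$-component $\pm q_2' \overline{u} \equiv -M_1 \widetilde{n}_2 \bmod q_2$ combined with $n_1 u \equiv \pm \overline{M_2^2} M_1^2 m \bmod d_2$ forces, upon eliminating $u$, the divisibility $d_2 \mid \overline{M_1^3} M_2^2 q_2' n_1 \pm m \widetilde{n}_2$, where $\overline{M_1^3}$ arises from combining the $M_1^2$ already in the constraint on $u$ with one further $\overline{M_1}$ produced by solving the $q_2$-congruence. The $q_2'$-component is symmetric, and together these give the first claimed bound.

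For the case $\widetilde{n}_2 = 0$, reducing the congruence modulo $q_2 q_2'$ and using that $\overline{u}, \overline{u'}$ are units forces $q_2 = q_2'$, exactly as in the $\mathcal{C}_2$ case. The remaining analysis then follows the Ramanujan-sum calculation of \cite{LS}; the substitution $m \mapsto M_1^2 m$ only rescales certain congruences by the unit $M_1^2$ and does not change the combinatorial counts, so the bound $\mathcal{C}_2^*(0) \ll q_1 q_2 r \sum_{d,d'\mid q_1 q_2,\, (d,d')\mid (m-m')} (d,d')$ follows verbatim. The only point demanding genuine care is the bookkeeping of $\overline{M_1}$ powers through the CRT decomposition, but I anticipate no real obstacle beyond that.
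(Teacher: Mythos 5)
Your proof is correct and follows exactly the route the paper intends: the paper gives no explicit argument for this lemma, merely declaring that $\mathcal{C}_2^*$ is estimated ``similarly'' to $\mathcal{C}_2$ (which in turn is cited to Lin--Sun \cite[Lemma 4.2]{LS}), so you have in effect supplied the omitted details. Your bookkeeping of the $\overline{M_1}$ powers is the one place where the computation genuinely deviates from the $\mathcal{C}_2$ case, and you handle it correctly: multiplying the orthogonality relation by $M_1$ produces the $\pm M_1\widetilde{n}_2$ in the $q_1 r/n_1$-congruence, while combining the $M_1^2$ in the congruence $n_1 u\equiv\pm\overline{M_2^2}M_1^2 m\bmod d$ with the residual $\overline{M_1}$ from the phase $\overline{M_1 u}$ yields precisely the $\overline{M_1^3}$ appearing in the divisibility conditions on $d_2,d_2'$; and your observation that the zero-frequency bound is unchanged because $M_1^2$ is a unit modulo $(d,d')$ (so $M_1^2 m\equiv M_1^2 m'$ iff $m\equiv m'$) is exactly right.
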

By \eqref{Omega_22}, \eqref{character sum-222}, Lemma \ref{integral estimate2} and Lemma \ref{mathcal{C} estimate}, the contribution from the terms with $\widetilde{n}_2=0 $ to $ \widetilde{\mathbf{\Omega}}_2^{\pm}( n_1,q_1,r)$ is at most
\bna
&&\frac{M^{\varepsilon}  C}{Q}
\sum_{ q_2\sim C/q_1 \atop (q_2, q_1 M)=1}\frac{1}{q_2^3}
\sum_{m\sim N_2}\frac{1}{m^{1/2}}\,
\sum_{m'\sim N_2}\frac{|\lambda_g(m')|^2}{m'^{1/2}} M_1^2M_2 \cdot q_1q_2r
\mathop{\sum\sum}_{d, d'|q_1q_2 \atop (d, d')|(m-m')}(d, d')\\
\ll&&
\frac{M^{1+\varepsilon} M_1 C^2 r}{Q N_1}
\sum_{ q_2\sim C/q_1 \atop (q_2, q_1 M)=1}q_2^{-3}
\mathop{\sum\sum}_{ m,m'\sim N_2} |\lambda_g(m')|^2
\mathop{\sum\sum}_{d, d'|q_1q_2 \atop (d, d')|(m-m')}(d, d')\\
\ll&&
\frac{M^{1+\varepsilon} M_1 C^2 r}{Q  }
\sum_{ q_2\sim C/q_1 \atop (q_2, q_1 M)=1}q_2^{-3}
\sum_{ \ell |q_1q_2 } \ell \left(  \frac{N_1}{\ell}+1\right)\\
\ll&&
\frac{M^{1+\varepsilon} M_1 C^2 r}{Q  }
\left( \frac{q_1}{C}\right)^2( N_2+C)\\
\ll&&
\frac{M^{1+\varepsilon} M_1 q_1^2 r}{Q  }( N_2+C)
\ena
which is obviously dominated by the right hand of \eqref{compare 1}. It follows that the contribution from the terms with $ \widetilde{n}_2=0 $ to $ \mathbf{S}_2$ is smaller that the right hand side of \eqref{compare 1}.

By \eqref{Omega_22}- \eqref{character sum-222} and Lemma \ref{integral estimate2} - \ref{mathcal{C} estimate}, we have that the contribution from the terms with $\widetilde{n}_2 \neq 0 $ to $ \widetilde{\mathbf{\Omega}}_2^{\pm}( n_1,q_1,r)$ is bounded by
\bna
&&\frac{M^{1+\varepsilon} M_1 C }{Q N_1 }
\sum_{ q_2\sim C/q_1 \atop (q_2, q_1 M)=1}q_2^{-3/2}
 \sum_{ q_2'\sim C/q_1 \atop (q_2', q_1 M)=1} q_2'^{-3/2}
\sum_{m\sim N_2}\sum_{m'\sim N_2} |\lambda_g(m')|^2\\
\times &&
\sum_{ 0 \neq  \widetilde{n}_2 \ll L^{ **}}
\mathop{\sum\sum}_{d_1, d_1'|q_1} d_1 d_1'
 \mathop{\sideset{}{^*}\sum_{u \bmod q_1r/n_1 \atop n_1u\equiv \pm \overline{M_2^2}M_1^2m\bmod d_1}\;
\sideset{}{^*}\sum_{u' \bmod q_1r/n_1 \atop
n_1u'\equiv \pm \overline{M_2^2}M_1^2m'\bmod d_1'}}_{
q_2\overline{u'} -q_2'\overline{u}\equiv \pm M_1\widetilde{n}_2\bmod q_1r/n_1}
\mathop{\sum\sum}_{d_2|(q_2, \overline{M_1^3}M_2^2 q_2'n_1\pm m \widetilde{n}_2)
\atop d_2'|(q_2', \overline{M_1^3}M_2^2 q_2n_1\mp m' \widetilde{n}_2)} d_2 d_2'.
\ena
Recall $ L^{**}=Q^{1+\varepsilon}Cn_1r/(q_1L) < L^{*}$ in \eqref{widetilde{n}_2}. One sees that the above display is smaller than the right hand side of \eqref{nonzreo}. Therefore, the contribution from the terms with $ \widetilde{n}_2 \neq 0 $ to $ \mathbf{S}_2$ is smaller than the upper bound in \eqref{S1-nonzero}.

\section{Proof of Lemma \ref{GL2 lemma}}
In this section we apply $\rm GL_2$ Voronoi formula to transform $\mathscr{A}$.
By the Fourier expansion of $\chi$ in the terms of additive characters (see (3.12) in \cite{IK})
$$
\chi(m)=\frac{1}{\tau(\overline{\chi})}\sum_{c(\text{{\rm mod }} M)}
\overline{\chi}(c)e\left(\frac{cm}{M}\right),
$$
we have
\bea\label{A-sum}
\mathscr{A}&=&\sum_{m=1}^{\infty}\lambda_g(m)
e\left(\frac{am}{qM_1 }\right)
 V\left(\frac{m}{N}\right)
e\left(\frac{m\zeta}{qQM_1}\right)
 \frac{1}{\tau(\overline{\chi})}\sum_{c(\text{{\rm mod }} M)}
 \overline{\chi}(c)e\left(\frac{cm}{M}\right)
 \nonumber\\
&=&
\frac{1}{\tau(\overline{\chi})}\sum_{c(\text{{\rm mod }} M)}
\overline{\chi}(c)\sum_{m=1}^{\infty}\lambda_g(m)
e\left(\frac{a M_2+cq}{qM}m\right)
V\left(\frac{m}{N}\right)
e\left(\frac{m\zeta}{qQM_1}\right).
\eea

Let $f$ be a holomorphic cusp form of weight $k$ or a Maass cusp form with Laplace
eigenvalue $1/4+\mu^2$ for $\rm SL_2(\mathbf{Z})$. We have the following
Voronoi formula for $g$ (see \cite[Theorem A.4]{KMV}).

\begin{lemma}\label{GL2 Voronoi formula}
Let $\varphi(x)$ be a smooth function compactly supported on $\mathbf{R}^+$.
Let $a, \overline{a}, c\in\mathbf{Z}$ with $c\neq0, (a,c)=1$ and
$a\overline{a}\equiv1\;(\text{{\rm mod }} c)$. Then
\bna
\sum_{m=1}^{\infty}\lambda_g(m)e\left(\frac{am}{c}\right)\varphi\left(\frac{m}{N}\right)
=\frac{N}{c}\sum_\pm\sum_{m=1}^{\infty}\lambda_g(m)e\left(\pm\frac{\overline{a}m}{c}\right)
\Psi^\pm\left(\frac{mN}{c^2}\right),
\ena
where
\bea\label{GL2 integral-2}
\Psi^\pm(x)=\int_0^{\infty}\varphi(y)\mathbf{J}_g^{\pm}(4\pi\sqrt{xy})\mathrm{d}y,
\eea
with
\bna
\mathbf{J}_g^{+}(x)=2\pi i^k J_{k-1}(x), \qquad \mathbf{J}_g^{-}(x)=0\quad
\mbox{if}\; g\; \mbox{is holomorphic},
\ena
and
\bna
\mathbf{J}_g^{+}(x)=\frac{-\pi}{\sin \pi i\mu} \left(J_{2i\mu}(x)-J_{-2i \mu}(x)\right),
\qquad \mathbf{J}_g^{-}(x)=4\varepsilon_g\cosh(\pi \mu)K_{2i\mu}(x)\quad
\mbox{if}\; g\; \mbox{is Maass}.
\ena
Here $\varepsilon_g$ is the eigenvalue of $g$
under the reflection operator.
\end{lemma}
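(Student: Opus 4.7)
The plan is to prove the Voronoi identity via the Mellin transform together with the functional equation of the additive twist of $L(s,g)$. Let $\tilde\varphi(s)=\int_0^\infty \varphi(y)y^{s-1}\mathrm{d}y$ be the Mellin transform of $\varphi$; since $\varphi$ is compactly supported in $\mathbf{R}^+$, $\tilde\varphi$ is entire and of rapid decay in vertical strips. By Mellin inversion,
$$
\sum_{m=1}^{\infty}\lambda_g(m)e\!\left(\frac{am}{c}\right)\varphi\!\left(\frac{m}{N}\right)
=\frac{1}{2\pi i}\int_{(\sigma)}\tilde\varphi(s)\,N^{s}\,L(s;a/c)\,\mathrm{d}s,
$$
where $L(s;a/c)=\sum_{m\geq 1}\lambda_g(m)e(am/c)m^{-s}$ is absolutely convergent for $\sigma=\Re(s)>1$.

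The key input is a functional equation for $L(s;a/c)$ relating $s\mapsto 1-s$ and $a/c\mapsto -\overline{a}/c$. To obtain it, I would expand the additive character in multiplicative ones modulo $c$ via Gauss sums, namely $e(am/c)=\varphi(c)^{-1}\sum_{\chi\,(c)}\overline{\chi(a)}\tau(\chi)\chi(m)$ on $(m,c)=1$, which expresses $L(s;a/c)$ as a finite linear combination of twisted $L$-functions $L(s,g\otimes\chi)$ plus a contribution from $(m,c)>1$ that can be handled by the same device after extracting common factors. The functional equations of $L(s,g\otimes\chi)$ for primitive and imprimitive $\chi$ mod $c$, together with the Gauss sum relation $\tau(\chi)\tau(\overline{\chi})=\chi(-1)c$, combine neatly to produce a clean functional equation of the shape
$$
L(s;a/c)=c^{1-2s}\,\gamma_g^{\pm}(s)\,L(1-s;-\overline{a}/c),
$$
where $\gamma_g^{\pm}(s)$ is the archimedean gamma factor associated with $g$ (of the form $\Gamma(s+\tfrac{k-1}{2})/\Gamma(1-s+\tfrac{k-1}{2})$ in the holomorphic case, and products of two $\Gamma$ factors indexed by $\pm i\mu$ in the Maass case, with signs $\pm$ according to the parity of $g$).

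Having this, I would shift the contour in the Mellin integral from $\Re(s)=\sigma$ to $\Re(s)=1-\sigma$; no poles are crossed since $g$ is cuspidal. Substituting $s\mapsto 1-s$, applying the functional equation, and reintroducing the Dirichlet series produces
$$
\frac{N}{c}\sum_{\pm}\sum_{m=1}^{\infty}\lambda_g(m)e\!\left(\pm\frac{\overline{a}m}{c}\right)\,
\frac{1}{2\pi i}\int_{(\sigma)}\tilde\varphi(1-s)\,\gamma_g^{\pm}(s)\,\Bigl(\frac{mN}{c^{2}}\Bigr)^{\!-s}\mathrm{d}s.
$$
The last step is to identify the inner Mellin--Barnes integral with $\Psi^{\pm}(mN/c^2)$ in \eqref{GL2 integral-2}; this uses the classical Mellin pairs
$$
\int_0^\infty J_{k-1}(4\pi\sqrt{x})x^{s-1}\mathrm{d}x=\frac{\Gamma(s+\tfrac{k-1}{2})}{\Gamma(\tfrac{k+1}{2}-s)}(2\pi)^{-2s},
$$
and the analogous identities for $J_{\pm 2i\mu}$ and $K_{2i\mu}$, which match exactly the archimedean kernels $\mathbf{J}_g^{\pm}$ stated in the lemma.

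The main obstacle is bookkeeping: cleanly deriving the functional equation for the additive twist $L(s;a/c)$ when $(m,c)>1$ terms are included (especially for composite $c$), and matching the exact normalization of gamma factors and signs so that the Mellin--Barnes integral reproduces $\mathbf{J}_g^{+}$ and $\mathbf{J}_g^{-}$ with the correct constants (including the $\varepsilon_g$ and the vanishing of $\mathbf{J}_g^{-}$ in the holomorphic case). Once the functional equation is proved in its explicit form, everything else is Mellin calculus.
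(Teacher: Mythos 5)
The paper does not prove this lemma at all: it simply cites Theorem~A.4 of Kowalski--Michel--Vanderkam \cite{KMV}, so your proposal is a genuinely different contribution (an actual proof sketch). The overall architecture you describe---Mellin inversion, functional equation of the additive twist $L(s;a/c)$, contour shift, then identifying the Mellin--Barnes integral with $\Psi^\pm$ via the classical Mellin transforms of $J_\nu$ and $K_\nu$---is the right one. However, the specific route you propose for obtaining the functional equation, namely expanding $e(am/c)$ in Dirichlet characters mod~$c$ and invoking the functional equations of $L(s,g\otimes\chi)$, has a real gap, not merely ``bookkeeping.'' Two problems: (i) for imprimitive $\chi$ mod~$c$ (including the principal character, and many others when $c$ is composite, as it is in this paper's application with $c=qM$ or $c=qM_2$), $L(s,g\otimes\chi)$ has no self-contained functional equation; you must pass to the primitive twist, which introduces missing Euler factors at primes dividing $c$ that involve $\lambda_g(p)$ and do not cancel automatically; (ii) the terms with $(m,c)>1$ cannot be expressed through the character expansion at all, and the Hecke eigenvalues $\lambda_g(m'd)$ with $d\mid c^\infty$ do not factor multiplicatively in a way that lets you ``extract common factors'' and reuse the same argument without substantial additional Hecke-theory input. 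These are exactly the difficulties that make the Dirichlet-character approach painful for additive twists of general (composite) modulus.

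The standard proof---and the one underlying \cite[Theorem A.4]{KMV}---avoids characters entirely and exploits the full level-$1$ modularity of $g$ directly. Given $(a,c)=1$, complete to a matrix $\gamma=\begin{pmatrix}a&b\\c&d\end{pmatrix}\in\mathrm{SL}_2(\mathbf{Z})$, represent the additive twist as an integral of $g$ along a vertical geodesic through the cusp $a/c$ (e.g.\ $\int_0^\infty g(a/c+iy)\,y^{s+k/2}\,\frac{dy}{y}$ in the holomorphic case, or the analogous integral with Whittaker functions in the Maass case), and apply the transformation law $g(\gamma z)=j(\gamma,z)^k g(z)$ (resp.\ $g(\gamma z)=g(z)$ together with the reflection $z\mapsto -\bar z$, which is where $\varepsilon_g$ and the $\mathbf{J}_g^-$ term originate). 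This yields the functional equation relating $L(s;a/c)$ to $L(1-s;\pm\overline{a}/c)$ directly, for arbitrary $c$, with the correct gamma factors in one stroke. You should either switch to this route or, if you insist on the character expansion, supply the full analysis of imprimitive characters and of the $(m,c)>1$ terms; as written, that part of the argument is asserted rather than proved, and it is precisely where the difficulty lies.
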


\begin{lemma}\label{voronoiGL2-Maass-asymptotic}
(1) For $0<x\ll 1$, we have
\bea\label{Bessel-small}
x^{\ell}\frac{\mathrm{d}^{\ell}}{\mathrm{d}x^\ell}\mathbf{J}_g^{+}(x)\ll_{\ell} 1,
\qquad x^{\ell}\frac{\mathrm{d}^{\ell}}{\mathrm{d}x^\ell}\mathbf{J}_g^{-}(x)\ll_{\ell} x^{-1/2}
\eea
for any integer $\ell\geq 0$.

(2) For any fixed integer $J\geq 1$ and $x\gg 1$, we have $\Psi^{-}(x)\ll_A x^{-A}$ for any $A>0$ and
\bea\label{Bessel-large}
\Psi^{+}(x)=x^{-1/4} \int_0^\infty \varphi(y)y^{-1/4}
\sum_{j=0}^{J}
\frac{c_{j} e(2 \sqrt{xy})+d_{j} e(-2 \sqrt{xy})}
{(xy)^{j/2}}\mathrm{d}y
+O_{\mu,J}\left(x^{-J/2-3/4}\right),
 \eea
where $c_{j}$ and $d_{j}$ are some constants depending on $\mu$.
\end{lemma}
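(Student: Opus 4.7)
The proof separates into the two parts, and each reduces to standard Bessel function asymptotics applied to the definitions of $\mathbf{J}_g^{\pm}$ given in Lemma \ref{GL2 Voronoi formula}.

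For part (1), I would substitute the power series $J_\nu(x) = (x/2)^\nu \sum_{k \ge 0}(-1)^k(x/2)^{2k}/(k!\,\Gamma(k+\nu+1))$ into the definitions of $\mathbf{J}_g^{+}$. In the holomorphic case, $J_{k-1}(x) = O_k(x^{k-1})$ for $x \ll 1$, and term-by-term differentiation gives $x^\ell \frac{d^\ell}{dx^\ell}J_{k-1}(x) = O(x^{k-1}) = O(1)$ since $k \ge 1$. In the Maass case, the combination $(J_{2i\mu}(x)-J_{-2i\mu}(x))/\sin(\pi i\mu)$ is manifestly bounded for $x \ll 1$ when $\mu$ is real, since $|(x/2)^{\pm 2i\mu}|=1$, and is at worst $O(x^{-\theta})$ with $\theta<1/2$ in the exceptional spectrum. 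For $\mathbf{J}_g^-(x) = 4\varepsilon_g\cosh(\pi\mu)K_{2i\mu}(x)$, I would use the representation $K_{2i\mu}(x) = \frac{\pi}{2i\sin(2\pi i\mu)}(I_{-2i\mu}(x)-I_{2i\mu}(x))$ to reduce to $I$-Bessel power series, yielding the asserted $x^{-1/2}$ bound with derivative control from term-by-term differentiation.

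For part (2), the two signs behave very differently. For $\Psi^-(x)$ (only present in the Maass case), I invoke the exponential decay $K_{2i\mu}(z) \sim \sqrt{\pi/(2z)}\,e^{-z}$ as $z \to \infty$; since $\varphi$ has compact support in $\mathbf{R}^+$, $y$ stays in a fixed dyadic range on the integration domain, so $4\pi\sqrt{xy} \asymp \sqrt{x}$ and $\Psi^-(x)$ decays faster than any polynomial in $x$. For $\Psi^+(x)$ I would apply the classical large-argument asymptotic expansion
\begin{equation*}
J_\nu(z) = \sqrt{\frac{2}{\pi z}}\sum_{j=0}^{J}\frac{\alpha_j(\nu)e^{i(z-\nu\pi/2-\pi/4)}+\beta_j(\nu)e^{-i(z-\nu\pi/2-\pi/4)}}{z^{j}} + O_\nu\bigl(z^{-J-3/2}\bigr)
\end{equation*}
with $z = 4\pi\sqrt{xy}$. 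Using $e^{\pm i\cdot 4\pi\sqrt{xy}} = e(\pm 2\sqrt{xy})$, the factor $z^{-1/2}$ contributes $(xy)^{-1/4}$ (absorbing numerical constants and the phases $e^{\mp i(\nu\pi/2+\pi/4)}$ into $c_j,d_j$), while $z^{-j}$ contributes $(xy)^{-j/2}$; after pulling out $x^{-1/4}$ this yields precisely \eqref{Bessel-large}. The same analysis applied to the Maass difference $J_{2i\mu}(z)-J_{-2i\mu}(z)$ (divided by $\sin \pi i \mu$) gives analogous constants depending on $\mu$. The error term $O(z^{-J-3/2})$ integrates over the compactly supported $\varphi$ to $O_{\mu,J}(x^{-J/2-3/4})$.

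The main technical obstacle is not any single estimate but rather keeping the constants $c_j,d_j$ genuinely independent of $x$ and $y$ while maintaining uniformity in the spectral parameter $\mu$. The cleanest framework is to start from a Mellin--Barnes representation of $\mathbf{J}_g^\pm$ (as in \cite{KMV}), shift contours to the right, and pick up the asymptotic terms as residues at poles of the associated gamma factors; this handles holomorphic and Maass cases in parallel and yields the error bounds in a unified way.
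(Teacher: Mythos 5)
Your proposal is correct and follows essentially the same route as the paper. For part (1) you use the power series of $J_\nu$ (identical to the paper) and reduce $K_{2i\mu}$ to $I$-Bessel series, whereas the paper instead invokes the integral representation $K_\nu(x)=\sqrt{\pi/(2x)}\,e^{-x}\Gamma(\nu+1/2)^{-1}\int_0^\infty e^{-u}u^{\nu-1/2}(1+u/(2x))^{\nu-1/2}\,du$ to get $K_\nu(x)\ll_\nu x^{-1/2}e^{-x}$; both are standard and yield the stated $x^{-1/2}$ bound (yours actually gives the sharper $O(1)$). For part (2), the paper simply cites Lemmas 3.3--3.4 of \cite{LS} for \eqref{Bessel-large}, while you derive it directly from the classical large-argument expansion of $J_\nu$ --- a valid and standard substitute; the concluding Mellin--Barnes remark is an aside not pursued by the paper.

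One small caution: since $g$ is a cusp form for $\rm SL_2(\mathbf{Z})$ there is no exceptional spectrum, so $\mu\in\mathbf{R}$ and the ``$O(x^{-\theta})$'' caveat is unnecessary here; also your $K$-to-$I$ reduction has a spurious $i$ in the denominator (it should read $K_\nu=\tfrac{\pi}{2}\,\tfrac{I_{-\nu}-I_\nu}{\sin(\nu\pi)}$), though this does not affect the argument.
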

\begin{proof}
(1) By the Taylor expansion of the $J$-Bessel function (see \cite[(8.402-8)]{GR}), for $|\arg z|<\pi$,
$$
J_{\nu}(x)=\frac{x^{\nu}}{2^{\nu}}\sum_{k=0}^{\infty}(-1)^k
\frac{x^{2k}}{2^{2k}k!\Gamma(\nu+k+1)}
$$
we have
$$
x^{\ell}J_{\nu}^{(\ell)}(x)\ll_{\nu,\ell} x^{\nu}
$$
for any integer $\ell\geq 0$. Thus the first inequality in \eqref{Bessel-small} follows.

Similarly, by the integral representation of $K_{\nu}(x)$ (see \cite[(8.432-8)]{GR}), for
$| \arg z|<\pi$, $\Re(\nu)>-1/2$, one has
$$
K_{\nu}(x)=\sqrt{\frac{\pi}{2x}}\frac{e^{-x}}{\Gamma(\nu+1/2)}
\int_0^{\infty}e^{-u}u^{\nu-1/2}\left(1+\frac{u}{2x}\right)^{\nu-1/2}\mathrm{d}u.
$$
Thus for $\nu\in i\mathbb{R}$ and $x>0$, we have
\bea\label{BoundOfK}
K_{\nu}(x)\ll_{\nu} x^{-1/2}e^{-x}.
\eea
In particular, for $0<x\ll 1$,
\bna
x^{\ell}K_{\nu}^{(\ell)}(x)\ll_{\nu,\ell} x^{-1/2}
\ena
for any integer $\ell\geq 0$, which yields the second inequality in \eqref{Bessel-small}.

\medskip

(2)
For $x\gg 1$, by \eqref{GL2 integral-2} and \eqref{BoundOfK}, it is easily seen that
$\Psi^{-}(x)\ll_A x^{-A}$ for any $A>0$. The asymptotic formula \eqref{Bessel-large} is proved in
Lemmas 3.3 and 3.4 in \cite{LS}.
\end{proof}

\medskip

Now we return to the sum over $m$ in \eqref{A-sum}.
Note that $(aM_2 +cq, q)=1$, $(aM_2+cq, M_2)=1$ and $(aM_2 +cq, M_1)=1$ or $M_1$. So
we distinguish two cases.

Case I. $(aM_2 +cq, M_1)=1$.

In this case $(aM_2 +cq, qM)=1$. Applying the $\rm GL_2$ Voronoi
formula in Lemma \ref{GL2 Voronoi formula} with
$\varphi(x)=V(x)e\left(\frac{\zeta N x}{qQM_1 }\right)$, we have
\bea\label{after GL2 Voronoi-1}
\sum_{m=1}^{\infty}\lambda_g(m)
e\left(\frac{(aM_2+cq)m}{qM}\right)
\varphi\left(\frac{m}{N}\right)
= \frac{N}{q M}
\sum_\pm\sum_{m=1}^{\infty}\lambda_g(m)
e\left(\pm\frac{\overline{aM_2+cq}}{qM}m\right)
\Psi^\pm\left(\frac{mN}{q^2M^2}\right),
\eea
where by \eqref{GL2 integral-2},
\bna
\Psi^\pm(x)=\int_0^{\infty}V(y)e\left(\frac{\zeta N y}{qQM_1 }\right)
\mathbf{J}_g^{\pm}(4\pi\sqrt{xy})\mathrm{d}y.
\ena

For $x\gg M^{\varepsilon}$, by the second statement of Lemma \ref{voronoiGL2-Maass-asymptotic},
we have $\Psi^-(x)\ll M^{-A}$ and the evaluation of $\Psi^{+}(x)$ is reduced to considering the integral
\bea\label{Psi integral}
\Psi_0(x,n,q)=x^{-1/4} \int_0^\infty y^{-1/4}V(y)
e\left(\frac{\zeta N y}{qQM_1 }\pm 2 \sqrt{xy}\right)\mathrm{d}y.
 \eea
Let $\varrho(y)=\frac{\zeta N y}{qQM_1 }\pm 2 \sqrt{xy}$. Then
$$
\varrho'(y)=\frac{\zeta N }{qQM_1 }\pm \sqrt{\frac{x}{y}}, \qquad
\varrho''(y)=\mp \frac{1}{2y}\sqrt{\frac{x}{y}}.
$$
Then repeated integration by parts shows that
the above integral is negligibly small unless we take the $-$ sign and $x\asymp \left(\frac{\zeta N }{qQM_1 }\right)^2$.
More precisely, by a stationary phase argument,
\bea\label{Psi integral-1}
\Psi_0(x,n,q)=x^{-1/2}F_{\natural}\left(\frac{q^2Q^2M_1^2x}{\zeta^2N^2}\right)
e\left(-\frac{qQM_1x}{\zeta N}\right)+O_A(M^{-A})
 \eea
 for any $A>0$.
So for $x=\frac{mN}{q^2M^2}$, we can restricted the sum over $m$ to
$$m \ll \frac{Q^2M^{2+\varepsilon}}{N}\ll M^{1+\varepsilon}M_2.$$
Here we recall that $Q^2M_1=N$.
On the other hand, for $x\ll M^{\varepsilon}$, the above restriction on $m$ holds trivially.
So in any case, up to a negligible error, we can restricted the sum over $m$ to
$m\leq M^{1+\varepsilon}M_2$.

By \eqref{A-sum}, \eqref{after GL2 Voronoi-1} and \eqref{Psi integral-1}, the contribution
from terms with $(aM+cq,qM)=1$ to $\mathscr{A}$ in \eqref{A-sum} is approximately
\bea\label{A-I}
\frac{ N^{1/2}}{\tau(\overline{\chi})}\sum_\pm
\sum_{c(\text{{\rm mod }} M)\atop c\not\equiv -\overline{q}aM_2\bmod M_1}
\overline{\chi}(c)
\sum_{m\leq M^{1+\varepsilon} M_2}\frac{\lambda_g(m)}{m^{1/2}}
e\left(\pm\frac{\overline{aM_2+cq}}{qM }m\right)
\mathfrak{I}^\pm\left(\frac{mN}{q^2 M^2 },q,\zeta\right)+O_A(M^{-A}),
\eea
where for $x\ll M^{\varepsilon}$,
\bea\label{I small}
\mathfrak{I}^\pm\left(x,q,\zeta\right)=x^{1/2}\int_0^{\infty}V(y)e\left(\frac{\zeta N y}{qQM_1 }\right)
\mathbf{J}_g^{\pm}(4\pi\sqrt{xy})\mathrm{d}y,
\eea
and for $x\gg M^{\varepsilon}$,
\bea\label{I large}
\mathfrak{I}^\pm\left(x,q,\zeta\right)=x^{1/4} \int_0^\infty y^{-1/4}V(y)
e\left(\frac{\zeta N y}{qQM_1 }\pm 2 \sqrt{xy}\right)\mathrm{d}y.
\eea

Case II. $(aM_2+cq,M_1)=M_1$.

In this case, $(aM_2 +cq, qM_1)=M_1$ and $c\equiv -\overline{q}aM_2$ mod $M_1$.
As in Case I, we apply Lemma \ref{GL2 Voronoi formula}
with $\varphi(x)=V(x)e\left(\zeta Nx/(qQ M_1)\right)$ to get that $m$-sum in \eqref{A-sum} equals
\bea \label{case ii}
&&\sum_m\lambda_g(m)e\left({\frac{(aM_2+cq)/M_1}{qM_2}m}\right)\varphi\left(\frac{m}{N}\right)\nonumber\\
&=&\frac{N}{qM_2}\sum_\pm\sum_m \lambda_g(m)
e\left(\pm\frac{\overline{(aM_2+cq)/M_1}}{qM_2}m\right)
\Psi^{\pm}\left(\frac{mN}{q^2M_2^2}\right),
\eea
where $\Psi^{\pm}(x)$ is defined in \eqref{GL2 integral-2}.
By the argument in Case I, we have, up to a negligible error, the sum over $m$ can be restricted by
\bna
\frac{mN}{q^2M_2^2}\cdot\frac{q^2Q^2M_1^2}{N^2}\leq M^{\varepsilon},
\ena
i.e., $m\leq M_2^{2+\epsilon}/M_1$.
For these values of $m$, by \eqref{Psi integral},
\bea \label{Psi-case ii}
\Psi^\pm\left(\frac{mN}{q^2M_2^2}\right)=
\frac{qM_2}{N^{1/2}m^{1/2}}
\mathfrak{I}^\pm\left(\frac{mN}{q^2M_2^2}, q,\zeta\right),
\eea
where $\mathfrak{I}^\pm(x,q,\zeta)$ is defined in \eqref{I small} and \eqref{I large}.

By \eqref{A-sum}, \eqref{case ii} and \eqref{Psi-case ii},
the contribution from $(aM_2+cq,qM)=M_1$ to $\mathscr{A}$ in \eqref{A-sum} equals
\bea\label{A-II}
\frac{N^{\frac{1}{2}}}{\tau(\overline{\chi})}
\sum_\pm
\sum_{c(\bmod M)\atop c\equiv-\overline{q}aM_2 \bmod M_1}\overline{\chi}(c)
\sum_{m\leq M_2^{2+\epsilon}/M_1}
\frac{\lambda_g(m)}{m^{1/2}}
e\left(\pm\frac{\overline{(aM_2+cq)/M_1}}{qM_2}m\right)
\mathfrak{I}^\pm\left(\frac{mN}{q^2M_2^2}, q,\zeta\right).
\eea

Then Lemma \ref{GL2 lemma} follows from \eqref{A-I}, \eqref{I small}, \eqref{I large} and \eqref{A-II}.

\section{Proof of Lemma \ref{GL3 lemma}}
\setcounter{equation}{0}
\medskip
In this section we will apply the $\rm GL_3$ Voronoi formula to transform $\mathscr{B}$,
where
\bna
\mathscr{B}=
\sum_{n}\lambda_{\pi}(n,r) e\left(-\frac{an}{qM_1}\right)\phi(n),
\ena
where $\phi(x)=W(x/N)e\left(-\zeta x/(qQM_1) \right)$.

We now recall the Voronoi summation formula for $\rm SL_3(\mathbf{Z})$.
For $\phi(x)\in C_c^\infty(0,\infty)$ we
denote by $\widetilde{\phi}(s)=\int_0^{\infty}\phi(x)x^{s-1}\mathrm{d}x$
the Mellin transform of $\phi(x)$.
Let
\bea\label{intgeral transform-3}
\Phi{\pm}\left(x\right)
=\frac{1}{2\pi i}\int_{(\sigma)}x^{-s}
\gamma_{\pm}(s)\widetilde{\phi}(-s)\mathrm{d}s,\qquad
\sigma>\max\limits_{1\leq j\leq 3}\{-1-\mathrm{Re}(\mu_j)\},
\eea
with
\bna
\gamma_{\pm}(s)=\frac{1}{2\pi^{3(s+1/2)}}\left(\prod_{j=1}^3
\frac{\Gamma\left((1+s+\mu_j)/2\right)}
{\Gamma\left((-s-\mu_j)/2\right)}\mp i\prod_{j=1}^3
\frac{\Gamma\left((2+s+\mu_j)/2\right)}
{\Gamma\left((-s-\mu_j+1)/2\right)}\right),
\ena
where $\mu_j$, $j=1,2,3$, are the Langlands parameters of $\pi$.
Then we have the following Voronoi summation formula (see \cite{GL1}, \cite{MS}).

\begin{lemma}\label{voronoiGL3}
Let $q\in \mathbb{N}$ and $a\in \mathbf{Z}$ be such
that $(a,q)=1$. Then
\bna
\sum_{n=1}^{\infty}\lambda_{\pi}\left(n,r\right)e\left(\frac{an}{q}\right)
\phi\left(n\right)= q\sum_{\pm}\sum_{n_{1}|qr}
\sum_{n_{2}=1}^{\infty}\frac{\lambda_{\pi}\left(n_{1},n_{2}\right)}{n_{1}n_{2}}
S\left(r\overline{a},\pm n_{2};\frac{qr}{n_{1}}\right)
\Phi_{\pm}\left(\frac{n_{1}^{2}n_{2}}{q^{3}r}\right),
\ena
where $a \overline{a} \equiv 1(\bmod q)$ and $S(m,n;c)$ is the classical Kloosterman sum.
\end{lemma}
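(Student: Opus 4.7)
The plan is to derive this formula from the functional equation of the additively twisted standard L-function of $\pi$, following Miller--Schmid \cite{MS} and Goldfeld--Li \cite{GL1}. First I would apply Mellin inversion to write
\begin{equation*}
\phi(n) \;=\; \frac{1}{2\pi i}\int_{(\sigma)} \widetilde{\phi}(-s)\, n^{s}\, \frac{\mathrm{d}s}{n},
\end{equation*}
so that the left-hand side becomes the contour integral of $\widetilde{\phi}(-s)$ against the Dirichlet series $D(s, r, a/q) := \sum_{n\geq 1} \lambda_{\pi}(n,r)\, e(an/q)\, n^{-s}$, taken on a line $\sigma > 1$ where everything converges absolutely.

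Second, I would relate $D(s,r,a/q)$ to the additive twist of the standard L-function. Using the Hecke multiplicativity relations between $\lambda_\pi(n,r)$ and $\lambda_\pi(1,\cdot)$, and grouping terms according to the gcd of $n$ with $qr$, one expresses $D(s,r,a/q)$ as a finite combination of twisted L-values, indexed by divisors $n_1 \mid qr$. Applying the functional equation of each of these L-functions exchanges $s \leftrightarrow -s$, replaces $\lambda_\pi(1,n)$ by $\overline{\lambda_\pi(1,n)}=\lambda_{\tilde{\pi}}(1,n)$, turns the additive character into its dual modulo the reduced modulus $qr/n_1$, and introduces precisely the gamma factor $\gamma_\pm(s)$ of \eqref{intgeral transform-3}. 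Shifting the contour to $\mathrm{Re}(s) = -\sigma$ and applying the inverse Mellin transform to the dual series then reconstructs the integral transform $\Phi_\pm$ and the dual sum over $n_2$.

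Third, the residual exponential sums arising in the bilinear recombination step are handled by opening the additive characters and evaluating the resulting sums over residue classes modulo $qr/n_1$; these collapse exactly into the Kloosterman sums $S(r\overline{a}, \pm n_2; qr/n_1)$. The contour shift is justified by the rapid decay of $\widetilde{\phi}(-s)$ in vertical strips, which follows from $\phi$ being smooth and compactly supported. The main obstacle is the third step: producing exactly the Kloosterman sum structure (rather than a more general hyper-Kloosterman sum, which would appear for a general GL(3) form treated naively) requires a careful local analysis at each prime dividing $qr$ and an accurate bookkeeping of the divisors $n_1 \mid qr$. This is precisely the bilinear combinatorial step that occupies the bulk of the arguments in \cite{MS} and \cite{GL1}, and it explains why the Voronoi dual has $n_1$ running over divisors of $qr$ rather than just $q$.
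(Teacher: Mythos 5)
The paper does not prove this lemma at all: it cites it directly from Goldfeld--Li \cite{GL1} and Miller--Schmid \cite{MS} as a known theorem. So there is no in-paper proof for your sketch to be compared against. Your outline does follow the broad strategy of the cited works (Mellin inversion, expansion of $\lambda_\pi(n,r)$ by Hecke multiplicativity, functional equation of additive twists, contour shift to reconstruct $\Phi_\pm$, and bilinear recombination into Kloosterman sums), and at that level of generality it is sound, so as an independent sketch it is reasonable. In the context of this paper, however, the appropriate move is simply to cite the references, as the authors do.

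One conceptual point worth correcting in your third step: for $\mathrm{GL}(3)$ there is no tension between ``hyper-Kloosterman'' and ``classical Kloosterman'' sums. The Voronoi formula on $\mathrm{GL}(n)$ produces $\mathrm{Kl}_{n-1}$, and $\mathrm{Kl}_2$ \emph{is} the classical Kloosterman sum $S(a,b;c)$. So in the $\mathrm{GL}(3)$ case the classical sum appears automatically; the ``careful local analysis'' you flag is genuinely needed for the bookkeeping over $n_1 \mid qr$ and for the ramified Hecke data, but not for any collapse of a higher-rank Kloosterman sum into a lower one. You should also double-check the symmetry you invoke: the functional equation for additive twists is conventionally written as $s \leftrightarrow 1-s$ with the normalization shifted so that $\widetilde{\phi}(-s)$ pairs with $\gamma_\pm(s)$ and $x^{-s}$, as in \eqref{intgeral transform-3}; writing $s \leftrightarrow -s$ without care about that normalization risks losing a factor of $n_1 n_2$ or $q$ in the final formula.
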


Applying the $\rm GL_3$ Voronoi formula in Lemma \ref{voronoiGL3}, we obtain
\bea\label{GL3 after Voronoi}
\mathscr{B}=qM_1\sum_{\pm}\sum_{n_{1}|qM_1 r} \sum_{n_2=1}^{\infty} \frac{\lambda_{\pi}(n_1,n_2)}{n_{1}n_{2}}
  S\left(-r\overline{a},\pm n_{2};\frac{qM_1  r}{n_{1}}\right)
  \Phi_{\pm}\left(\frac{n_{1}^{2}n_{2}}{q^3M_1^3 r}\right),
\eea
where $\Phi_{\pm}(x)$ is defined as in \eqref{intgeral transform-3}.

First, we study the integral transform  $\Phi_{\pm}(x)$ in \eqref{intgeral transform-3}. By Stirling's formula, for $\sigma\geq -1/2$,
\bna
\gamma_{\pm}(\sigma+i\tau)\ll_{\pi,\sigma}(1+|\tau|)^{3\left(\sigma+1/2\right)}.
\ena
Moreover, for $s=\sigma+i\tau$,
$$
\widetilde{\phi}(-s)=N^{-s} W^{\dag}\left(\frac{\zeta N}{qQM_1 },-s\right)\ll
N^{-\sigma}\min \left\{1,\left(\frac{N}{qQM_1|\tau|}\right)^{j}\right\}
$$
for any $j\geq 0$. Thus
\bna
\Phi_{\pm}\left(x\right)
&\ll&
x^{-\sigma}N^{-\sigma}
 \int_{\mathbb{R}}(1+|\tau|)^{3(\sigma+1/2)}
\min\left\{1, \left(\frac{N}{qQM_1|\tau|}\right)^j\right\}
\mathrm{d}\tau
\\
&\ll&
(x N)^{-\sigma}\left(\frac{N}{qQM_1}\right)^{3\sigma+5/2}
\\
&\ll&
\left(\frac{N}{qQM_1 }\right)^{5/2}
\left(\frac{Q^3q^3M_1^3x}{N^2}\right)^{-\sigma}.
\ena
Thus,
$\Phi_{\pm}\left(\frac{n_{1}^{2}n_{2}}{q^3M_1^3 r}\right)$ on the right
hand side of \eqref{GL3 after Voronoi} gives arbitrary power savings in $M$ if
$n_1^2n_2> N^{2+\varepsilon}r/Q^3$ for any $\varepsilon>0$.
For small values of $n_1^2n_2$, we move the integration line to
$\sigma=-1/2$ to get
\bea\label{special value}
\Phi_{\pm}\left(\frac{n_{1}^{2}n_{2}}{q^3M_1^3 r}\right)
&=&
\frac{1}{2\pi}\int_{\mathbb{R}}
\left(\frac{n_1^2n_2}{q^3M_1^3 r}\right)^{\frac{1}{2}-i\tau}
\gamma_{\pm}
\left(-\frac{1}{2}+i\tau\right)
N^{\frac{1}{2}-i\tau}W^{\dag}\left(\frac{\zeta N}{qQM_1}, \frac{1}{2}-i\tau\right)\mathrm{d}\tau
\nonumber\\
&:=&
\left(\frac{Nn_{1}^{2}n_{2}}{q^3M_1^3 r}\right)^{1/2}\mathfrak{J}^{\pm}
\left(\frac{n_{1}^{2}n_{2}}{q^3M_1^3 r},q,\zeta\right),
\eea
where
\bea\label{J-integral definition}
\mathfrak{J}^{\pm}
\left(x,q,\zeta\right)=\frac{1}{2\pi}
\int_{\mathbb{R}}(Nx)^{-i\tau}
\gamma_{\pm}\left(-\frac{1}{2}+i\tau\right) W^{\dag}\left(\frac{\zeta N}{qQM_1},\frac{1}{2}-i\tau\right)\mathrm{d}\tau.
\eea
By \eqref{GL3 after Voronoi} and \eqref{special value},
\bea\label{B expression}
\mathscr{B}&=&\frac{N^{1/2}}{q^{1/2}M_1^{1/2} r^{1/2}}\sum_{\pm}\sum_{n_{1}|qM_1  r}\;
\sum_{n_1^2n_2\leq \frac{N^{2+\varepsilon}r}{Q^3}}
\frac{\lambda_{\pi}(n_1,n_2)}{n_2^{1/2}}
  S\left(-r\overline{a},\pm n_{2};\frac{qM_1 r}{n_{1}}\right)\nonumber\\&&\qquad\qquad\qquad\qquad
  \qquad\qquad\qquad
\times\mathfrak{J}^{\pm}\left(\frac{n_{1}^{2}n_{2}}{q^3M_1^3 r},q,\zeta\right)+O(M^{-A}).
\eea
Then Lemma \ref{GL3 lemma} follows from \eqref{J-integral definition} and \eqref{B expression}.


\begin{thebibliography}{100}


\bibitem{B}{}
V. Blomer, {\it Subconvexity for twisted $L$-functions on GL(3)},
Amer. J. Math. 134 (2012), no. 5, 1385-1421.



\bibitem{DFI}{}

W. Duke, J. B. Friedlander, H. Iwaniec,
{\it Bounds for automorphic L -functions},
Invent. Math. 112 (1993), no. 1, 1-8.



\bibitem{GL1}{}
D. Goldfeld and X. Li,
{\it Voronoi formulas on $GL(n)$},
Int. Math. Res. Not. 2006, Art. ID 86295, 25 pp.

\bibitem{GR}{}
I. S. Gradshteyn and I. M. Ryzhik,
{\it Table of integrals, series, and products}, edition 7,
Translated from the Russian;
Translation edited and with a preface by Alan Jeffrey and Daniel Zwillinger;
With one CD-ROM (Windows, Macintosh and UNIX),
Elsevier/Academic Press, Amsterdam, 2007,
xlviii+1171 pages.
	
\bibitem{HB}{}
Huang, Bingrong,
{\it On the Rankin-Selberg problem},
Math. Ann. 381 (2021),  no. 3-4, 1217-1251.


\bibitem{Hux2}{}
M. N. Huxley, {\it Area, lattice points, and exponential sums},
London Mathematical Society Monographs. New Series,
volume 13, Oxford Science Publications,
The Clarendon Press, Oxford University Press, New York,
1996, xii+494 pages.


\bibitem{IK}{}
H. Iwaniec, E, Kowalski, {\it Analytic number theory},
Amercian Mathematical Society Colloquium Publications 53, Amercian Mathematical Society, Providence, RI, 2004.




\bibitem{KMV}{}
E. Kowalski, P. Michel and J. Vanderkam.
{\it Rankin-Selberg $L$-functions in the level aspect},
Duke Mathe. J. {\bf 114} (2002), 123-191.




\bibitem{LMS}{}
 Y. Lin, P. Michel, W. Sawin,
{\it Algebraic twists of $\rm GL_3 \times \rm GL_2$ $L$-functions},
arXiv:1912.09473.

\bibitem{LS}{}
 Y. Lin, Q. Sun,
{\it Analytic twists of $GL(3)\times GL(2)$ automorphic forms},
Int. Math. Res. Not. IMRN 2021(19) (2021),15143-15208.


\bibitem{Mol}{}
G. Molteni,
{\it Upper and lower bounds at $s=1$ for certain Dirichlet series with Euler product},
Duke Mathematical Journal 111 (2002), no. 1, 133-158.

\bibitem{Munshi1}{}
R. Munshi,
{\it The circle method and bounds for $L$-functions, II:
Subconvexity for twists of $GL(3)$ $L$-functions},
Amer. J. Math. 137 (2015), no. 3, 791-812.



\bibitem{Munshi2}{}
R. Munshi,
{\it The circle method and bounds for $L$-functions-III:
$t$-aspect subconvexity for $GL(3)$ $L$-functions},
J. Amer. Math. Soc. 28 (2015), no. 4, 913-938.



\bibitem{Munshi2018}{}
R. Munshi,
{\it Subconvexity for $\rm GL(3)\times \rm GL(2)$ $L$-functions in $t$-aspect},
\emph{ArXiv preprint} (2018), arXiv:1810.00539.


\bibitem{MS}{}
S.D. Miller and W. Schmid,
{\it Automorphic distributions, $L$-functions, and Voronoi summation for $GL(3)$},
Ann. of Math. (2) 164 (2006), no. 2, 423-488.


\bibitem{Sharma2019}
P. Sharma,
{\it Subconvexity for $\rm GL(3)\times \rm GL(2)$ twists in level aspect,}
\emph{ArXiv preprint} (2019), arXiv:1906.09493.



\end{thebibliography}
\end{document}